\documentclass[10pt]{amsart}
\usepackage{graphicx} % Required for inserting images
\usepackage[utf8]{inputenc}
\usepackage{amsmath}
\usepackage{amssymb}
\usepackage{amsthm}
\usepackage{graphicx}
\usepackage[rightcaption]{sidecap}
\usepackage{MnSymbol} 
\usepackage{color}
\usepackage[square,numbers]{natbib}
\usepackage{nicefrac}
\usepackage{mathdots}
\usepackage{multirow}
\usepackage{nicefrac}
\usepackage{faktor}
\usepackage{hyperref}

\DeclareUnicodeCharacter{2212}{-}

\newtheorem{theorem}{Theorem}[section]
\newtheorem{theorem*}[theorem]{*Theorem}
\newtheorem{example}[theorem]{Example}
\newtheorem{corollary}[theorem]{Corollary}
\newtheorem{lemma}[theorem]{Lemma}
\newtheorem{proposition}[theorem]{Proposition}
\newtheorem{remark}[theorem]{Remark}

\newtheorem{proposition*}[theorem]{*Proposition}
\newtheorem{lemma*}[theorem]{*Lemma}
\newtheorem{corollary*}[theorem]{*Corollary}
\newtheorem{example*}[theorem]{Example}

\theoremstyle{definition}
\newtheorem{definition}[theorem]{Definition}

\title{On abelian complex structures on Nilpotent Lie Algebras}

\author{L. Pizarro}
\address[L. Pizarro and R. Villacampa]{Departamento de Matem\'aticas\,-\,I.U.M.A.\\
Universidad de Zaragoza\\
Campus Plaza San Francisco\\
50009 Zaragoza, Spain}
\email{lpizarro@unizar.es}
\email{raquelvg@unizar.es}

\author{R. Villacampa}

\begin{document}

\maketitle

\tableofcontents

\bigskip

\begin{abstract}
We provide a new approach to the classification, up to equivalence, of abelian complex structures on nilpotent Lie algebras.  As an application, we recover the already known classifications in dimensions 4 and 6 and provide a classification in dimension 8 for 1-abelian complex structures.  Some results about the general case are given. 

%ANTIGUO - The classification up to equivalence of abelian nilpotent complex structures of 6-dimensional Nilpotent Lie Algebras (NLA) were studied in ([Hermitian], [Invariant]). The objective of this article is arriving to the last mentioned results defining a $GL(2, \mathbb{C})$-action over $M_{2\times 2}(\mathbb{C})$ and looking for the orbit set of that action.
\end{abstract}

\section{Introduction}
A complex structure on a real Lie algebra $\mathfrak{g}$ is an endomorphism
$J \colon \mathfrak{g} \to \mathfrak{g}$ satisfying $J^2 = -\mathrm{Id}$ together
with the integrability condition
\[
N_J(x,y) := [Jx,Jy] - [x,y] - J[Jx,y] - J[x,Jy] = 0, \qquad x,y \in \mathfrak{g}.
\]
When $\mathfrak{g}$ is the Lie algebra of a simply connected Lie group $G$, such a $J$ induces a left-invariant complex structure on $G$, and when $G$ is furthermore a nilpotent Lie group admitting a lattice $\Gamma$, it induces an invariant complex structure on the compact nilmanifold $\Gamma \backslash G$. Nilmanifolds equipped with invariant complex structures constitute one of the main sources of examples and counterexamples in complex non-K\"ahler geometry, and their study links Lie theory, complex geometry, geometric analysis and mathematical physics.

Among  complex structures on nilpotent Lie algebras, a distinguished
class is given by the \emph{abelian} complex structures, namely those satisfying
the stronger condition
$$
[Jx,Jy] = [x,y], \qquad x,y \in \mathfrak{g},
$$
which in particular implies integrability. Equivalently, a complex structure $J$ is abelian if the
eigenspace decomposition $\mathfrak{g}_{\mathbb{C}} = \mathfrak{g}_{1,0} \oplus
\mathfrak{g}_{0,1}$ satisfies $[\mathfrak{g}_{1,0},\mathfrak{g}_{1,0}] = 0$, so
that $\mathfrak{g}_{1,0}$ is an abelian subalgebra of $\mathfrak{g}_{\mathbb{C}}$.  

The abelian condition is far from being a mild refinement of integrability: it
imposes genuine structural restrictions on the underlying nilpotent Lie
algebra. On the geometric side, Rollenske \cite{Rollenske2009} showed that the
ascending central series of any nilpotent Lie algebra carrying an abelian
complex structure is $J$-invariant, so that the associated nilmanifold
automatically inherits the structure of an iterated holomorphic principal
torus bundle; this is not the case for a general (non-abelian) invariant
complex structure. On the algebraic side, Barberis and Dotti
\cite{BarberisDotti2004} obtained an explicit obstruction to the existence of abelian complex structures on 2-step nilpotent Lie algebras showing that the abelian
condition constrains the relative size of the derived algebra with respect to
the center.

%: a two-step
%nilpotent Lie algebra $\mathfrak{n}$ with center $\mathfrak{z}$ and
%$n = \dim \mathfrak{n} - \dim \mathfrak{z} \geq 3$ cannot carry an abelian
%complex structure if the bracket, regarded as a skew-symmetric map on
%$\mathfrak{n}/\mathfrak{z}$, is generic in the sense that
%$2\dim[\mathfrak{n},\mathfrak{n}] = n(n-1)$; this shows that the abelian
%condition constrains the relative size of the derived algebra with respect to
%the center. 
In addition, Barberis, Dotti and Verbitsky \cite{BDV2009} proved
that nilmanifolds with an abelian complex structure have holomorphically
trivial canonical bundle, a property with strong consequences for the
associated hypercomplex and HKT geometry. On the other hand, the abelian
condition does not by itself bound the nilpotency step: Barberis and Dotti
\cite{BarberisDotti2004} showed that for every $k$ there exists a $k$-step
nilpotent Lie algebra carrying an abelian \emph{hypercomplex} structure, so
that obstructions of the type above arise from the finer interaction between
the bracket and $J$ at each step, rather than from the step itself.

Abelian complex structures were singled out because of their especially rigid
interaction with the Lie bracket, and they appear naturally in several contexts:
they are the building blocks of abelian hypercomplex structures
\cite{BarberisDotti1996}, they underlie explicit constructions
of hyper-K\"ahler with torsion metrics \cite{FinoGrantcharov}, and the Dolbeault
cohomology of a nilmanifold carrying an abelian complex structure can be computed
purely at the level of the Lie algebra \cite{ConsoleFinoPoon}, \cite{CFGU2000}. Deformations of
abelian complex structures on nilmanifolds were studied by Maclaughlin-Pedersen-Poon-Salamon
and by Rollenske \cite{MPPS2006,Rollenske2009} further underlining the special role this
class plays among invariant complex structures.

The study of invariant complex structures on nilpotent Lie algebras was put
on a systematic footing in 2001, independently and essentially simultaneously,
by Salamon \cite{Salamon} and by Cordero, Fern\'andez, Gray and Ugarte
\cite{CFGU2001}. Salamon \cite{Salamon} gave an algebraic characterization,
in terms of a suitably adapted basis, of which nilpotent Lie algebras admit a
complex structure, while Cordero, Fern\'andez, Gray and Ugarte
\cite{CFGU2001} studied general (not necessarily abelian) nilpotent complex
structures with a view towards computing Dolbeault cohomology at the level of
the Lie algebra. Building on this foundation, Cordero, Fern\'andez and Ugarte
\cite{CFU2002} obtained the first classification of six-dimensional nilpotent
Lie algebras admitting an \emph{abelian} complex structure, together with a
description of the invariant Hermitian and symplectic geometry compatible
with such structures. Shortly afterwards, Barberis and Dotti
\cite{BarberisDotti2004} extended the study of abelian complex structures
beyond the nilpotent setting, to a class of solvable Lie algebras, and related
them to the associated hypercomplex geometry. These two lines of work were
unified and refined, from the point of view of moduli, by Andrada, Barberis
and Dotti \cite{ABD2011}, who classified all six-dimensional Lie algebras (nilpotent as well as non-nilpotent solvable) admitting an abelian complex
structure, and parameterized, on each of these algebras, the space of such
structures up to holomorphic equivalence (\cite{ABD}). More recently, a complementary
approach, based on the Fr\"olicher spectral sequence and on the classification
of arbitrary (not necessarily abelian) invariant complex structures on
six-dimensional nilpotent Lie algebras, was carried out by Ceballos, Otal,
Ugarte and Villacampa \cite{COUV}, who located the abelian ones within the
full moduli space of invariant complex structures and analyzed the associated
Hermitian metrics.

The six-dimensional case has been particularly influential because it represents the first dimension where the classification becomes genuinely nontrivial. In higher dimensions the situation becomes considerably more intricate. The diversity of nilpotent Lie algebras increases dramatically, and complete classifications are no longer available. Very recently, Andrada and Vera \cite{AndradaVera} introduced an
explicit procedure to construct 2-step nilpotent Lie algebras equipped with
complex structures starting from finite graphs, characterizing those graphs
admitting an \emph{abelian} adapted complex structure.

In this paper we focus on nilpotent Lie algebras $\frak g$ of dimension $2n$ endowed with an abelian complex structure characterized by the condition $h^{1,0} = n-1$, where $h^{1,0}= \dim H_{\bar \partial}^{1,0}(\frak g)$.  We call these structures \emph{1-abelian complex structures} and they are in one-to-one correspondence with complex matrices.  More precisely, the generic complex expression of a  1-abelian complex structure is given by the following equations, which we denote by $J_M$:
$$
    J_M = \begin{cases}
            d\omega^1=...=d\omega^{n-1}=0,\\
            d\omega^n=\omega M \overline{\omega}^T,
        \end{cases}
    $$
    where $\{\omega^i\}_{i=1}^n$ is a basis of $(1,0)$-forms,  $\omega := \begin{pmatrix} 
        \omega^1 & \omega^2 & ... & \omega^{n-1}
    \end{pmatrix}$ and $M \in M_{n-1}(\mathbb{C})$.

We study equivalence of complex structures $J_M$ using first the classification of $M$ by *congruence, following the methods developed by \cite{HS1} and \cite{HS2}, depending on the regularity of the matrix $M$.  With this new approach, we recover the previously known classifications in dimensions 4 and 6, provide the complete classification in dimension 8 and give a lower bound for the non-equivalent families of 1-abelian complex structures that exist in each complex dimension $n$.

The paper is organized as follows. In Section~\ref{sec:2} we
recall the basic notions on complex structures on Lie algebras, with
particular emphasis on the abelian case.  We define 1-abelian complex structures and classify them when $M$ is Hermitian.  In
Section~\ref{sec:3} we revisit the Horn-Sergeichuk theory of *congruence and present the canonical forms of *congruence for regular and non-regular matrices.  Finally, in Section~\ref{sec:4} we obtain the complete classification of 1-abelian complex structures in dimensions 4, 6 and 8 (see Table~\ref{Tabla-dim8} for the summary of dimension 8).  Moreover, we provide a combinatorial approach to determine a lower bound for the number of non-equivalent families of 1-abelian complex structures for each complex dimension $n$ (see Theorem~\ref{thm:cota-complex-st}).

%%%%%%%%%%%%%%%%%%%%%%%%%%%%%%%%%%%%%%%%

\section{Abelian complex structure equations on Nilpotent Lie Algebras}\label{sec:2}
Let $\mathfrak{g}$ be a Lie algebra, $\mathfrak{g}_\mathbb{C} = \mathfrak{g}\otimes_{\mathbb R}\mathbb C$ its complexification and $\mathfrak{g}^{*}_\mathbb{C}$, the dual of the latter. Given a complex structure $J$ on $\mathfrak{g}$, we denote by $\mathfrak{g}_{1,0}$ and $\mathfrak{g}_{0,1}$ the eigenspaces of eigenvalues $i$ and $-i$ respectively.  Extending $J$ to $\mathfrak{g}^{*}_\mathbb{C}$, there is a natural bigraduation $\bigwedge^{*}\mathfrak{g}^{*}_\mathbb{C} = \bigwedge^p \mathfrak{g}^{1,0} \bigoplus \bigwedge^q \mathfrak{g}^{0,1}=\bigoplus_{p,q} \bigwedge^{p,q} \mathfrak{g}^{*}$. If $d:\bigwedge^{*}\mathfrak{g}^{*}_\mathbb{C} \longrightarrow \bigwedge^{*+1} \mathfrak{g}^{*}_\mathbb{C}$ is the extension of the Chevalley-Eilenberg differential operator, then $J$ satisfies that $d(\mathfrak{g}^{1,0}) \subseteq \mathfrak{g}^{2,0}\oplus\mathfrak{g}^{1,1}$.\\
In the particular case when $\mathfrak{g}$ is a \textit{nilpotent} Lie algebra (NLA for short) of dimension $2n$ for $n\geq1$, Salamon proved in \cite{Salamon} that the integrability condition for $J$ is equivalent to the existence of a basis $\{\omega^i\}_{i=1}^n$ for $\mathfrak{g}^{1,0}$ such that $d\omega^1=0$ and $$d\omega^i \in \mathcal{I}(\omega^1,...,\omega^{i-1})\text{ for } i=2,...,n.$$ 
Moreover, if the complex structure is of \textit{nilpotent} type, then \cite{CFGU2000} there exists a basis $\{\omega^i\}_{i=1}^n$ for $\mathfrak{g}^{1,0}$ such that $d\omega^1=0$ and
\begin{equation*}%\label{nilp}
    d\omega^i \in \bigwedge^2\langle\omega^1,...,\omega^{i-1},\omega^{\bar 1},...,\omega^{\overline{i-1}}\rangle\text{ for } i=2,...,n.
\end{equation*} 
An important class of nilpotent complex structures is that of the \textit{abelian complex structures} consisting of those satisfying $[JX, JY]=[X, Y]$, and this implies that the subalgebra $(\mathfrak{g}_{1,0}; [,])$ is abelian.  In terms of the differential of $(1,0)$-forms, it happens that $d(\mathfrak{g}^{1,0}) \in \mathfrak{g}^{1,1}$ and therefore, there exists a basis $\{\omega^i\}_{i=1}^n$ for $\mathfrak{g}^{1,0}$ such that $d\omega^1=0$ and
\begin{equation}\label{abel}
    d\omega^i \in \langle\omega^1,...,\omega^{i-1}\rangle \wedge \langle\omega^{\bar 1},...,\omega^{\overline{i-1}}\rangle \text{ for } i=2,...,n.
\end{equation}

We will focus on a special type of abelian complex structures. 

\begin{definition}
Let $(\mathfrak{g}, J)$ be a $2n$-dimensional NLA endowed with an abelian complex structure. 
    We will say that $J$ is a \emph{1-abelian complex structure} if $h^{1,0} = n-1$, where $h^{1,0}= \dim H_{\bar \partial}^{1,0}(\frak g)$.
\end{definition}

\begin{example}
    Using the classification of abelian complex structures in six-dimensional NLAs \cite{COUV} we observe that the only Lie algebras admitting 1-abelian complex structures are $\mathfrak h_i$ for $i=2, 3, 4, 5$ and $8$. 
\end{example}

The following lemma provides a characterization of 1-abelian complex structures:

\begin{lemma}
Let $(\mathfrak{g}, J)$ be a $2n$-dimensional NLA endowed with a 1-abelian complex structure.  Then, there exists a basis $\{\omega^i\}_{i=1}^n$ for $\mathfrak{g}^{1,0}$ such that the complex structure equations are of the form 
\begin{equation}\label{ouracs}
    \begin{cases}
            d\omega^1=...=d\omega^{n-1}=0\\
            d\omega^n=\omega M \overline{\omega}^T
        \end{cases}
    \end{equation}
    being $\omega := \begin{pmatrix} 
        \omega^1 & \omega^2 & ... & \omega^{n-1}
    \end{pmatrix}$ and $M \in M_{n-1}(\mathbb{C})$ not being the zero-matrix.
\end{lemma}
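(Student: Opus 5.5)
Starting from the abelian normal form \eqref{abel}, fix a basis $\{\omega^i\}_{i=1}^n$ of $\mathfrak{g}^{1,0}$ with $d\omega^1=0$ and $d\omega^i\in\langle\omega^1,\dots,\omega^{i-1}\rangle\wedge\langle\omega^{\bar 1},\dots,\omega^{\overline{i-1}}\rangle$. The first step is to identify $H^{1,0}_{\bar\partial}(\mathfrak{g})$ at the Lie algebra level. Since $J$ is abelian, $d(\mathfrak{g}^{1,0})\subseteq\mathfrak{g}^{1,1}$, so $\partial=0$ on $(1,0)$-forms and $d=\bar\partial$ there. There are no $(1,-1)$-forms, so nothing is exact in degree $(1,0)$. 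Hence
\[
H^{1,0}_{\bar\partial}(\mathfrak{g})=\ker\bigl(d\colon \mathfrak{g}^{1,0}\to\mathfrak{g}^{1,1}\bigr)=:Z,
\]
and the hypothesis $h^{1,0}=n-1$ says $\dim Z=n-1$.

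The second step is a change of basis. Choose a basis $\eta^1,\dots,\eta^{n-1}$ of $Z$ and complete it with any $\eta^n\in\mathfrak{g}^{1,0}\setminus Z$. Then $d\eta^1=\dots=d\eta^{n-1}=0$, and $d\eta^n\neq 0$ since $Z$ has dimension exactly $n-1$.

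The third step, and the main point, is to show that $d\eta^n$ lies in $\langle\eta^1,\dots,\eta^{n-1}\rangle\wedge\langle\eta^{\bar 1},\dots,\eta^{\overline{n-1}}\rangle$, i.e.\ it has no components involving $\eta^n$ or $\eta^{\bar n}$.
\begin{itemize}
\item \emph{Using the filtration.} In the abelian basis, $d\omega^1=0$, so $Z\neq 0$. More to the point, $\omega^n$ does not occur in any $d\omega^i$. The span $V=\langle\omega^1,\dots,\omega^{n-1}\rangle$ is a $d$-stable subspace whose differentials lie in $\Lambda^{1}V\wedge\Lambda^1\bar V$.
\item \emph{Reducing to $Z=V$.} It suffices to show $Z=V$. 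Then take $\eta^i=\omega^i$ for $i<n$ and $\eta^n=\omega^n$, and \eqref{abel} gives $d\omega^n\in V\wedge\bar V$ directly.
\item \emph{Proving $Z=V$.} Suppose $Z\neq V$. Since both have dimension $n-1$, there is some $\alpha=\sum_i a_i\omega^i$ with $a_n\neq0$ and $d\alpha=0$. Then $Z+V=\mathfrak{g}^{1,0}$ and $\dim(Z\cap V)=n-2$, so at most one direction of $V$ is not closed. Take $v\in V\setminus Z$ and write $\omega^n$ as a combination of $\alpha$, elements of $Z\cap V$, and $v$. Then $d\omega^n$ is a multiple of $dv$, which lies in $V\wedge\bar V$.
\end{itemize}
So in either case $d\eta^n\in V\wedge\bar V$. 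Moreover, every element of $V$ is a combination of the new basis forms: $V\subseteq Z+\langle v\rangle$. This part needs care. If $V\neq Z$, I would instead swap roles and choose the basis $\eta^i$ of $Z$ together with $\eta^n=v$. Then I must check that $dv$ lies in $Z\wedge\bar Z$.

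I expect the main obstacle to be this last check. The approach is to use nilpotency: Salamon's condition forces $dv$ to be built from forms earlier in a filtration. I would try to show that $dv$ lies in the ideal generated by $Z$, and then use $d^2=0$ to eliminate $v\wedge\bar v$-type terms. Concretely, write $dv=\beta\wedge\bar v+v\wedge\bar\gamma+c\,v\wedge\bar v+\theta$ with $\beta,\gamma\in Z$ and $\theta\in Z\wedge\bar Z$. Then $0=d^2v$ together with nilpotency of $\mathrm{ad}$ should force $c=0$, $\beta=0$ and $\gamma=0$. This is exactly the argument that a nilpotent derivation-type action has no nonzero eigenvalue.

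Once the form $d\eta^n=\sum_{i,j<n}m_{ij}\,\eta^i\wedge\eta^{\bar j}$ is established, set $M=(m_{ij})\in M_{n-1}(\mathbb{C})$. This gives $d\eta^n=\eta M\bar\eta^T$, and $M\neq0$ because $d\eta^n\neq0$. This yields \eqref{ouracs}.
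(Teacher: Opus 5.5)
\textbf{There is a genuine gap in your third step.} You never prove that the non-closed generator has its differential in $Z\wedge\bar Z$, and that is the entire content of the lemma.

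The reduction ``it suffices to show $Z=V$'' reduces to something false in general. The adapted basis $d\omega^1=0$, $d\omega^2=\omega^{1\bar 1}$, $d\omega^3=0$ satisfies \eqref{abel} and is 1-abelian, yet $Z=\langle\omega^1,\omega^3\rangle\neq V=\langle\omega^1,\omega^2\rangle$.

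Your bullet ``Proving $Z=V$'' derives no contradiction from $Z\neq V$. It only recovers $d\omega^n\in V\wedge\bar V$, which \eqref{abel} already states, and which does not help because $V\not\subseteq Z$.

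Your fallback is to expand $dv=\beta\wedge\bar v+v\wedge\bar\gamma+c\,v\wedge\bar v+\theta$ and use $d^2v=0$ ``together with nilpotency''. This is left as an expectation, and $d^2v=0$ alone does not do the work. Both $dv=v\wedge\bar v$ and $dv=v\wedge\bar\gamma$ with $\gamma\in Z$ satisfy $d^2v=0$. They arise on non-nilpotent algebras: for instance $d\eta^1=0$, $dv=v\wedge\bar v$ is $\mathfrak{aff}(\mathbb R)\oplus\mathbb R^2$ with an abelian $J$ and $h^{1,0}=1$, and the conclusion of the lemma fails there. So everything hinges on a use of nilpotency that you do not supply.

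\textbf{The missing idea.} Nilpotency is already encoded in \eqref{abel}, provided you look at the \emph{first} non-closed element of the adapted basis rather than the last.
\begin{itemize}
\item[(1)] Since $\dim Z=n-1$, the image $d(\mathfrak g^{1,0})$ is one-dimensional.
\item[(2)] Let $k$ be the smallest index with $d\omega^k\neq 0$ (so $k\geq 2$). Then $\omega^1,\dots,\omega^{k-1}\in Z$.
\item[(3)] By \eqref{abel}, $d\omega^k\in\langle\omega^1,\dots,\omega^{k-1}\rangle\wedge\langle\omega^{\bar 1},\dots,\omega^{\overline{k-1}}\rangle\subseteq Z\wedge\bar Z$.
\item[(4)] Write $d\omega^i=c_i\,d\omega^k$ for $i>k$. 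Then $\omega^1,\dots,\omega^{k-1},\omega^{k+1}-c_{k+1}\omega^k,\dots,\omega^n-c_n\omega^k$ form a basis of $Z$.
\item[(5)] Appending $\omega^k$ as the last element gives \eqref{ouracs}, with $M\neq 0$ supported in the upper-left $(k-1)\times(k-1)$ block.
\end{itemize}
This is what the paper's one-line proof (``$\dim d(\mathfrak g^{1,0})=1$ \dots\ using \eqref{abel}'') amounts to; no $d^2=0$ computation is needed.

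Your identification of $H^{1,0}_{\bar\partial}(\mathfrak g)$ with $Z$ and your final read-off of $M$ are fine.
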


\begin{proof}
Since $J$ is abelian, then $d(\mathfrak{g}^{1,0}) \in \mathfrak{g}^{1,1}$ and therefore, for any $(1,0)$-form $\alpha$, $d\alpha = \bar\partial \alpha$.  If $h^{1,0}=n-1$, then $\dim d(\mathfrak{g}^{1,0}) = 1$ as vector space and we obtain the result using \eqref{abel}.
\end{proof}

Observe that a 1-abelian complex structure can be identified with the matrix $M$.  We will use the notation $J_M$ referring to equations~\eqref{ouracs}.

%There exists interesting examples of NLA admitting 1-abelian complex structures. 
%\begin{example}
%    Consider the 6-dimensional Lie algebra $\mathfrak{g}$ given by the non-zero Lie brackets
%    $$[X_1, X_2]=[X_3, X_4]=-X_6,$$ where $\mathfrak{g}=\langle X_1,\ldots, X_6\rangle$ as real vector space. This NLA is known as \textit{Heisenberg algebra}.
%
 %   In terms of the dual basis $\{e^j\}_{j=1}^6$ of $\mathfrak{g}^*$ and using the exterior differential of forms, the structure equations of $\mathfrak g$ are:
%$$de^1=\dots=de^5=0; \quad de^6=e^1\wedge e^2+e^{3}\wedge e^4.$$ 
%Let us consider the complex structure given by $$Je^1=-e^2; \quad Je^3=-e^4; \quad Je^5=2e^6.$$Then $\mathfrak{g}^{1,0}=\langle \omega^1:=e^1+ie^2, \, \omega^2:=e^3+ie^4, \, \omega^3:=e^5-2ie^6\rangle$. And the equations of the complex structure are $$
%        d\omega^1=d\omega^2=0,\quad
%        d\omega^3=\omega^{1\bar 1}+\omega^{2\bar 2}.$$
%This structure follows equations~\eqref{ouracs} with $M=I_2$.       
%\end{example}

\medskip

The existence of a 1-abelian complex structure imposes topological obstructions to the nilpotent Lie algebra $\mathfrak g$. In particular: %An interesting property of this type of abelian complex structures is the following. 

\begin{proposition}\label{prop:topo}
    Let $(\mathfrak{g}, J_M)$ be a 1-abelian complex structure on a 2n-dimensional NLA, then:
    \begin{itemize}
    \item[(a)] $\mathfrak{g}$ is 2-step nilpotent. 
    \item[(b)] $b_1(\mathfrak g)\geq 2n-2$.
    \end{itemize}
\end{proposition}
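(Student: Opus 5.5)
The plan is to work entirely with the structure equations of the preceding Lemma: there is a basis $\{\omega^i\}_{i=1}^n$ of $\mathfrak g^{1,0}$ with $d\omega^1=\dots=d\omega^{n-1}=0$ and $d\omega^n=\omega M\overline{\omega}^T$, $M\neq 0$. Conjugating gives the equations for $\mathfrak g^{0,1}$, namely $d\omega^{\bar k}=0$ for $k\le n-1$ and $d\omega^{\bar n}=\overline{\omega} \overline{M}\omega^T$. Thus $\{\omega^1,\dots,\omega^n,\omega^{\bar 1},\dots,\omega^{\bar n}\}$ is a basis of $\mathfrak g^*_{\mathbb C}$ in which $2n-2$ elements are closed. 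The differentials of the remaining two, $d\omega^n$ and $d\omega^{\bar n}$, lie in $\bigwedge^2 V$, where $V=\langle\omega^1,\dots,\omega^{n-1},\omega^{\bar1},\dots,\omega^{\overline{n-1}}\rangle$ is the span of the closed elements. Every conclusion follows from this.

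For (b), I would use that $H^1(\mathfrak g;\mathbb C)=\ker(d\colon\mathfrak g^*_{\mathbb C}\to\bigwedge^2\mathfrak g^*_{\mathbb C})$, since there are no exact $1$-forms, and that $b_1(\mathfrak g)=\dim_{\mathbb C}H^1(\mathfrak g;\mathbb C)$ because complexification preserves dimensions. The kernel contains $V$, which has dimension $2n-2$ since its generators are part of a basis, so $b_1(\mathfrak g)\ge 2n-2$. If one wants more precision, the kernel is exactly $V$ plus whatever combinations $a\omega^n+b\omega^{\bar n}$ are closed. This depends on whether $M$ and $\overline M$ are proportional, but it is not needed here.

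For (a), I would use the dual description of the lower central series. A Lie algebra is $2$-step nilpotent, meaning $[\mathfrak g,[\mathfrak g,\mathfrak g]]=0$, if and only if $d(\mathfrak g^*)\subseteq\bigwedge^2\ker(d|_{\mathfrak g^*})$, equivalently $d(\mathfrak g^*)\subseteq\bigwedge^2 \mathrm{Ann}([\mathfrak g,\mathfrak g])$. To justify this, I would note that $\ker d|_{\mathfrak g^*}=\mathrm{Ann}([\mathfrak g,\mathfrak g])$, since $d\alpha(x,y)=-\alpha([x,y])$. Then $d\alpha\in\bigwedge^2\mathrm{Ann}([\mathfrak g,\mathfrak g])$ for all $\alpha$ means $\alpha([x,[y,z]])=0$ for all $x,y,z$ and all $\alpha$, which is exactly $[\mathfrak g,[\mathfrak g,\mathfrak g]]=0$. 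The same argument works after complexifying. The equations above show $d(\mathfrak g^*_{\mathbb C})\subseteq\bigwedge^2 V\subseteq\bigwedge^2\ker d$, so $\mathfrak g$ is at most $2$-step nilpotent. It is not abelian because $d\omega^n=\omega M\overline\omega^T\neq0$ when $M\neq0$: the forms $\omega^{i}\wedge\omega^{\bar j}$ are linearly independent, so the $2$-form vanishes only if $M=0$. Hence $\mathfrak g$ is exactly $2$-step nilpotent.

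The only step that requires any care is the translation in (a) between the condition on the differential and the bracket condition, together with the real/complex passage. I do not expect any real obstacle beyond stating the identity $d\alpha(x,y)=-\alpha([x,y])$ and checking it on triple brackets.
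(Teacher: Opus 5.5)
Your proof is correct and is essentially the paper's argument: part (b) is the same count of the $2n-2$ closed forms coming from $\omega^1,\dots,\omega^{n-1}$ and their conjugates. Part (a) is the form-side dual of the paper's observation that $Z_n,\bar Z_n$ are central; you additionally make explicit that $[\mathfrak g,\mathfrak g]$ lies in their span, a step the paper leaves implicit, and your use of $M\neq 0$ shows the algebra is exactly (not merely at most) $2$-step.
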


\begin{proof} Let $\{\omega^i\}_{i=1}^n$ be the adapted basis for the structure $J_M$. For case (a), denote by $\{Z_i\}_{i=1}^n$ the dual basis of $\{\omega^i\}_{i=1}^n$.
    If $\mathfrak{g}$ has a complex structure of type \eqref{ouracs} then the complexification of the center has at least two elements, $Z_n$ and $\bar Z_n$, and thus $\mathfrak{g}_2^{\mathbb{C}}=\mathfrak{g}$. So $\mathfrak{g}$ is $s$-step nilpotent with $s \leq 2$.  For case (b), $\{\mathfrak{Re}\, \omega^i, \mathfrak{Im} \,\omega^i\}_{i=1}^{n-1}$ are real closed 1-forms in $\frak g$.
\end{proof}
However, the converse statements are false as the next example shows:

\begin{example}
Consider the 8-dimensional Lie algebra $\mathfrak{g}$ given by the non-zero Lie brackets
    $$[X_1, X_2]=-X_7,\quad [X_3, X_4]=-X_8,$$ where $\mathfrak{g}=\langle X_1,\ldots, X_8\rangle$ as a real vector space.  It is straightforward to check that $\mathfrak{g}_1=Z(\mathfrak{g})=\langle X_5, \, X_6, \, X_7, \, X_8 \rangle$ and $\mathfrak{g}_2=\mathfrak{g}$, so $\mathfrak{g}$ is 2-step. 

\medskip

    In terms of the dual basis $\{e^j\}_{j=1}^8$ of $\mathfrak{g}^*$ and using the exterior differential of forms, the structure equations of $\mathfrak g$ are:
$$de^1=\dots=de^6=0; \quad de^7=e^{12},\quad de^8 =e^{34},$$ 
where $e^{ij}$ stands for $e^i\wedge e^j$. Now, consider the complex structure defined as $$Je^1=-e^2; \quad Je^3=-e^4; \quad Je^5=2e^7; \quad Je^6=2e^8.$$ Then $\mathfrak{g}^{1,0}=\langle \omega^1,\dots,\omega^4\rangle$, where $$\omega^1=e^1+ie^2; \quad \omega^2=e^3+ie^4; \quad \omega^3=e^5-2ie^7; \quad \omega^4=e^6-2ie^8.$$
    The equations of this complex structure are \begin{equation*}
           d\omega^1=d\omega^2=0,\quad
           d\omega^3=\omega^{1\bar 1},\quad
           d\omega^4=\omega^{2\bar 2},
    \end{equation*}
   and it is immediate to see that $h^{1,0} = 2\neq 3$, so this structure is not 1-abelian (although it is abelian).  
\end{example}

%\begin{remark}
%    From now on, we use the following notation, for instance $\mathfrak{g}=(0^5, 12)$ means that there exists a basis $\{x^i\}_{i=1}^6$ of $\mathfrak{g}^{*}$ such that $dx^1=dx^2=dx^3=dx^4=dx^5=0$ and $dx^6=x^1 \wedge x^2$. Or, in terms of the Lie bracket and the dual basis $\{x_i\}_{i=1}^6$ of $\mathfrak{g}$  , $[x^1, x^2]=-x^6$ and the rest of the brackets are zero.
%\end{remark}

Moreover, we can also establish some partial results about the behaviour of the Frölicher spectral sequence associated to NLAs endowed with 1-abelian complex structures.

\begin{proposition}\label{prop:espectral}
    Let $(\mathfrak{g}, J_M)$ be a $1$-abelian complex structure on a $2n$-dimensional NLA.  If $b_1(\mathfrak g) = 2n-2$, then $E_1(\mathfrak g) \neq E_{\infty}(\mathfrak g)$.
\end{proposition}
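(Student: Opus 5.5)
Our plan is to compare the dimensions of the first-degree terms of the Frölicher spectral sequence. Since $E_\infty$ computes the de Rham cohomology, we have $\sum_{p+q=1}\dim E_\infty^{p,q}=b_1(\mathfrak g)=2n-2$. It therefore suffices to show that $\dim E_1^{1,0}+\dim E_1^{0,1} = h^{1,0}+h^{0,1} > 2n-2$. By definition of 1-abelian, $h^{1,0}=n-1$, so the task reduces to proving $h^{0,1}\geq n$.

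To compute $H^{0,1}_{\bar\partial}(\mathfrak g)$, we use the structure equations \eqref{ouracs} and take conjugates: $d\omega^{\bar i}=0$ for $i\le n-1$, and $d\omega^{\bar n}=\overline{\omega M\overline\omega^T}$, which is again a $(1,1)$-form. Hence $\bar\partial\omega^{\bar n}=d\omega^{\bar n}$, and $\bar\partial\omega^{\bar i}=0$ for $i\le n-1$. No $(0,0)$-form has nonzero $\bar\partial$ (in the Lie algebra setting, $\bar\partial$ vanishes on constants), so
\[
H^{0,1}_{\bar\partial}(\mathfrak g)=\ker\bigl(\bar\partial\colon \textstyle\bigwedge^{0,1}\to\bigwedge^{1,1}\bigr).
\]
This kernel contains $\omega^{\bar 1},\dots,\omega^{\overline{n-1}}$. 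It does not contain $\omega^{\bar n}$, because $M\neq 0$ forces $\bar\partial\omega^{\bar n}\neq 0$. Thus $h^{0,1}=n-1$.

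This gives only $h^{1,0}+h^{0,1}=2n-2=b_1$, so degree one cannot by itself show that $E_1\neq E_\infty$. The main obstacle is therefore to find a degree in which $E_1$ is strictly larger than $E_\infty$. The natural candidate is the nonvanishing differential $d_1=\partial\colon E_1^{0,1}\to E_1^{1,1}$ applied to the class of $\omega^{\bar n}$. Here we must be careful: $\omega^{\bar n}$ is not $\bar\partial$-closed, so it does not define a class in $E_1^{0,1}$, and this candidate has to be replaced.

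Our first attempt will be degree $0$ and the pair $(p,q)=(1,0)$ versus de Rham. Since $d=\bar\partial$ on $(1,0)$-forms, $E_1^{1,0}$ has dimension $n-1$, and the $\partial$-type differentials out of it are zero. Consequently any failure of degeneration must appear in total degree $2$. We will therefore compute $b_2(\mathfrak g)$ for the $2$-step algebra with one-dimensional complex derived algebra $d\omega^n=\omega M\overline\omega^T$, and compare it with $\sum_{p+q=2}h^{p,q}$. The key step is to exhibit a class in $E_1^{0,2}$ or $E_1^{1,1}$ that is killed by $d_1$. The concrete candidate is the $d_1$-image of the class of $\omega^{\bar n}\wedge\omega^{\bar j}$ or of $\omega^{n}\wedge\omega^{\bar j}$. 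The hypothesis $b_1=2n-2$ enters here: it says that $d\omega^n$ and $d\omega^{\bar n}$ are linearly independent, that is, $M$ is not a multiple of a Hermitian matrix. We expect this independence to be exactly what makes such a $d_1$ nonzero, since a suitable combination is $\bar\partial$-closed but not $\partial$-closed. Producing this combination explicitly from the non-proportionality of $M$ and $\overline M^{T}$ is the step we expect to be hardest.
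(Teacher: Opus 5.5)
\textbf{There is a genuine gap: $h^{0,1}$ is computed incorrectly, and the rest of the proposal depends on that error.}

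For a $(0,1)$-form $\alpha$ one has $d\alpha=\partial\alpha+\bar\partial\alpha$ with $\partial\alpha\in\bigwedge^{1,1}$ and $\bar\partial\alpha\in\bigwedge^{0,2}$. The form $d\omega^{\bar n}=\overline{\omega M\overline{\omega}^T}$ is of pure type $(1,1)$. So what follows is $\partial\omega^{\bar n}=d\omega^{\bar n}$ and $\bar\partial\omega^{\bar n}=0$, not $\bar\partial\omega^{\bar n}=d\omega^{\bar n}$. You used the types correctly for $(1,0)$-forms, where the $(1,1)$-component of $d$ really is $\bar\partial$, but you swapped them for $(0,1)$-forms.

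Hence, for any abelian structure, $\bar\partial$ vanishes on all of $\bigwedge^{0,1}$. Therefore $H^{0,1}_{\bar\partial}(\mathfrak g)=\bigwedge^{0,1}$ and $h^{0,1}=n$, not $n-1$. With this correction your original degree-one plan already finishes the proof, and it is exactly the paper's argument:
$\dim E_1^{1,0}+\dim E_1^{0,1}=(n-1)+n=2n-1>2n-2=b_1=\sum_{p+q=1}\dim E_\infty^{p,q}$, so $E_1\neq E_\infty$.

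As written, the proposal does not prove the statement. The claims that degree one cannot detect non-degeneration and that any failure must occur in total degree $2$ are false. The degree-two step is only announced as something you expect to work; it is never carried out.

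The candidate you discarded actually works once the type error is fixed. Since $\omega^{\bar n}$ is $\bar\partial$-closed, $[\omega^{\bar n}]\in E_1^{0,1}$, and $d_1[\omega^{\bar n}]=[\partial\omega^{\bar n}]=[d\omega^{\bar n}]\in H^{1,1}_{\bar\partial}$. Because $\bar\partial\omega^i=d\omega^i$ on $(1,0)$-forms, $\bar\partial(\bigwedge^{1,0})=\mathbb C\,d\omega^n$. So this class vanishes if and only if $d\omega^{\bar n}\in\mathbb C\,d\omega^n$. Your reformulation of $b_1=2n-2$ as the linear independence of $d\omega^n$ and $d\omega^{\bar n}$ is correct, and it rules this out. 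This gives $d_1\neq 0$, which pinpoints the non-vanishing differential; the paper's dimension count only shows that some differential touching total degree $1$ is nonzero.
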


\begin{proof}
    For general abelian complex structures, we have that $h^{0,1} = n$.  Now, for the special case of 1-abelian, we obtain that
    $$E_1^{|1|} = E_1^{1,0} + E_1^{0,1} = h^{1,0} + h^{0,1} = 2n-1.$$  Since $E_1^{|1|}\geq E_{\infty}^{|1|} = b_1$, the result follows.
\end{proof}

%Returning to our question, two complex structures $J_1$ and $J_2$ in a given nilpotent Lie algebra $\mathfrak{g}$ are equivalent if and only if there exists a $\mathbb{C}$-linear isomorphism $F:\mathfrak{g}^{1,0}_{J_1} \longrightarrow \mathfrak{g}^{1,0}_{J_2}$ that commutes with the Chevalley-Eilenberg differential operator. With this in mind, given a 6-dimensional nilpotent Lie algebra $\mathfrak{g}$ endowed with an abelian complex structure $J$, then $J$ is fully determined by the values of $d\omega^i$, for $\{\omega^i\}_{i=1}^3$ a given basis of $\mathfrak{g}^{1,0}$. So another complex structure $J'$ is equivalent if there exists a basis change such that the condition for the Chevalley-Eilenberg differential is still satisfied. The definition of $d\omega^i$ is given by the condition $d(\mathfrak{g}^{1,0}) \subseteq \mathfrak{g}^{1,1}$.\\

As for other types of complex structures, we are concerned with studying two types of problems: which NLAs admit 1-abelian complex structures and the classification of the structures up to equivalence.  Recall that two complex structures $J_1$ and $J_2$ are equivalent if there exists a $\mathbb{C}$-linear isomorphism $F:\mathfrak{g}^{1,0}_{J_1} \longrightarrow \mathfrak{g}^{1,0}_{J_2}$ that commutes with the Chevalley-Eilenberg differential operator.  In dimension 6, the classification up to isomorphism of abelian complex structures that a given nilpotent Lie algebra can admit was made in \cite{ABD, COUV}.  Some partial results in higher dimensions appear in \cite{AndradaVillacampa2016} where Hermitian balanced metrics are involved.

\medskip

For 1-abelian complex structures, the conditions for having an equivalence between $J_M$ and $J_{\widetilde M}$ are equivalent to the 
existence of two bases of (1,0)-forms $\{\omega^i\}_{i=1}^n$ and $\{\sigma^i\}_{i=1}^n$ such that
\begin{itemize}
    \item[(i)] $\omega=\sigma P$, with $\sigma := \begin{pmatrix} 
        \sigma^1 & \sigma^2 & ... & \sigma^{n-1}
    \end{pmatrix}$ and $P \in GL(n-1, \mathbb{C})$,
    \item[(ii)] $c\,\omega^n=\sigma^n$ with $c \in \mathbb{C}^*$,
    \item[(iii)] satisfying: $\widetilde M=c(PMP^*)$.\end{itemize} 
In terms of the new basis $\{\sigma^i\}_{i=1}^n$, equations \eqref{ouracs} transform into 
\begin{equation*}\label{cseq2}
    \begin{cases}
            d\sigma^1=...=d\sigma^{n-1}=0\\
            d\sigma^n= c(\sigma PM\bar P^T\bar \sigma^T).
    \end{cases}
\end{equation*}
\begin{remark}
    From now on, given a matrix $N$, we will denote $\bar N^T$ by~$N^*$.
\end{remark}

In fact, the equivalence of 1-abelian complex structures can be set in terms of the corresponding matrices:
 
 \begin{lemma}\label{equivalence}
 Two complex structures $J_M$ and $J_{\widetilde M}$ are \emph{equivalent} if and only if there exist $c \in \mathbb{C} \setminus \{0\}$ and $P \in GL(n-1, \mathbb{C})$ such that $$\widetilde M=c(PMP^*).$$ 
 \end{lemma}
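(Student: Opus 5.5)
Both directions reduce to tracking how a $\mathbb{C}$-linear change of $(1,0)$-basis compatible with $d$ acts on equations \eqref{ouracs}. The key structural input is that, for $J_M$, the closed $(1,0)$-forms are exactly $\ker d|_{\mathfrak{g}^{1,0}}=\langle\omega^1,\dots,\omega^{n-1}\rangle$. This space has dimension $n-1$ because $M\neq 0$ forces $d\omega^n\neq 0$. Any equivalence must preserve this kernel, which is where the block structure (i)–(ii) comes from.

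For the direction ($\Leftarrow$), suppose $\widetilde M=c\,PMP^*$. Let $\{\omega^i\}$ be a basis adapted to $J_M$, and define $\sigma:=\omega P^{-1}$ and $\sigma^n:=c\,\omega^n$. Then $\{\sigma^i\}$ is a basis of $(1,0)$-forms, its first $n-1$ elements are closed, and
\[
d\sigma^n=c\,\omega M\overline{\omega}^T=c\,\sigma P M \bar P^T\bar\sigma^T=\sigma\,\widetilde M\,\bar\sigma^T ,
\]
using $\overline{\omega}^T=\overline{\sigma P}^T=\bar P^T\bar\sigma^T=P^*\bar\sigma^T$. So $\{\sigma^i\}$ realizes the equations of $J_{\widetilde M}$. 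The linear map sending the adapted basis of $J_{\widetilde M}$ to $\{\sigma^i\}$ is then a $\mathbb{C}$-linear isomorphism of $(1,0)$-spaces commuting with $d$. Hence the two structures are equivalent.

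For the direction ($\Rightarrow$), let $F$ be an equivalence, and transport the adapted basis of $J_{\widetilde M}$ so that both structures are expressed on the same $\mathfrak{g}^{1,0}$ with bases $\{\omega^i\}$ (for $M$) and $\{\sigma^i\}$ (for $\widetilde M$). Since $F$ commutes with $d$, it maps closed forms to closed forms. Hence $\langle\sigma^1,\dots,\sigma^{n-1}\rangle=\langle\omega^1,\dots,\omega^{n-1}\rangle$, which gives $\omega=\sigma P$ with $P\in GL(n-1,\mathbb{C})$. We also get $\sigma^n=c\,\omega^n+\sum_{j<n}a_j\omega^j$ with $c\neq 0$, since the forms span $\mathfrak{g}^{1,0}$. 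The extra terms are closed, so they do not affect $d\sigma^n=c\,d\omega^n$, and we may drop them (or just note they are irrelevant). Comparing the two expressions for $d\sigma^n$ gives
\[
\sigma\widetilde M\bar\sigma^T=c\,\sigma PMP^*\bar\sigma^T .
\]

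The only point needing care, which I expect to be the main (mild) obstacle, is extracting the matrix identity from this $(1,1)$-form identity. The forms $\sigma^i\wedge\sigma^{\bar j}$ for $1\le i,j\le n-1$ are linearly independent in $\bigwedge^{1,1}\mathfrak{g}^*$, because $\{\sigma^i,\sigma^{\bar j}\}$ is a basis of $\mathfrak{g}^*_{\mathbb{C}}$. A $(1,1)$-form $\sigma N\bar\sigma^T=\sum N_{ij}\sigma^i\wedge\sigma^{\bar j}$ therefore determines $N$ uniquely. This yields $\widetilde M=c\,PMP^*$, completing the proof.
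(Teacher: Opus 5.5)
Your proposal is correct and follows essentially the paper's route: the paper likewise relates adapted bases by $\omega=\sigma P$, $\sigma^n=c\,\omega^n$ and reads off $\widetilde M=c(PMP^*)$ from $d\sigma^n$. You also justify the points the paper only asserts: an equivalence must preserve the kernel $\langle\omega^1,\dots,\omega^{n-1}\rangle$ of $d$ on $(1,0)$-forms, a closed correction $\sum_{j<n}a_j\omega^j$ in $\sigma^n$ is harmless, and the coefficient matrix of $\sigma N\bar\sigma^T$ is uniquely determined.
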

 We can think of a two-step process of transforming the matrix $M$:
\begin{equation}\label{2-step-M} 
M\stackrel{(i)}{\longrightarrow} M'= PMP^*\stackrel{(ii)}{\longrightarrow} \widetilde M = cM' = c(PMP^*).
\end{equation}
 Observe that condition (ii) can be viewed as rescaling the last element of the new basis, whereas condition (i) corresponds to a known $GL(n-1, \mathbb{C})$-action:
\begin{definition}\label{*conj}
    Let $GL(k,\mathbb{C})$ denote the group of invertible $k\times k$ complex matrices, and let $M_k(\mathbb{C})$ denote the set of all $k\times k$ complex matrices.  
The \textbf{*conjugacy group action} of $GL(k,\mathbb{C})$ on $M_k(\mathbb{C})$ is the map
\[
GL(k,\mathbb{C}) \times M_k(\mathbb{C}) \to M_k(\mathbb{C}), 
\quad (P,M) \mapsto PM P^*.
\]
\end{definition}

\begin{definition}\label{def:matrices-congruentes}
Let $A, B\in M_n(\mathbb C)$.  We will say that $A$ and $B$ are \emph{*congruent} if there exists $P\in GL(n,\mathbb C)$ such that $B = PAP^*$. 
\end{definition}

Observe that *congruent matrices provide equivalent complex structures.  However, the converse is false due to possibility of scaling the matrix by a complex number.

In what follows, we will study first the quotient space $M_n(\mathbb{C})/\sim$ of square matrices modulo *conjugacy group action, characterizing each orbit in terms of algebraic invariants of the matrix $M$ (see Section~\ref{sec:3}). Then, in Section~\ref{sec:4}, we will come back to the problem of classifying  1-abelian complex structures taking into account the rescaling condition (ii).  Before studying the general case, let us analyze the 1-abelian complex structures $J_M$ defined by a Hermitian matrix $M$.

\subsection{1-Abelian complex structures associated to Hermitian matrices}  In this section we classify up to equivalence 1-abelian complex structures $J_M$ where $M$ is a Hermitian matrix, i.e. $M\in M_k(\mathbb C)$ satisfying $M=M^*$.  In this setting, we can apply the well-known Sylvester's inertia law:
\begin{theorem}\label{sil}
    Let $M \in M_k(\mathbb{C})$ be a Hermitian complex matrix. Then, there exists $P \in GL(k, \mathbb{C})$ such that \[PMP^*=\begin{pmatrix}
        Id_p &  & \\
         & -Id_q & \\
        &  & 0_r
    \end{pmatrix}\]
    where $p+q+r=k$, where $p$, respectively $q$, is the number of positive, resp. negative,  eigenvalues of~$M$ and $r=dim(\ker (M))$. 
\end{theorem}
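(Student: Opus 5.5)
The plan is to follow the standard proof of Sylvester's law of inertia in two parts: existence of the diagonal normal form via the spectral theorem and a rescaling, and then identification of $p,q,r$ with intrinsic invariants of $M$.

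\textbf{Existence.} Since $M=M^*$, the spectral theorem for Hermitian matrices gives a unitary $U$ with $UMU^*=D=\mathrm{diag}(\lambda_1,\dots,\lambda_k)$ and all $\lambda_j\in\mathbb R$. Because $U$ is unitary, $U^{-1}=U^*$, so this is simultaneously a similarity and a *congruence. Composing with a permutation matrix $\Pi$ (also unitary), we may order the diagonal entries as $\lambda_1,\dots,\lambda_p>0$, then $\lambda_{p+1},\dots,\lambda_{p+q}<0$, then $p+q+r=k$ trailing zeros. Set
\[
S=\mathrm{diag}\bigl(|\lambda_1|^{-1/2},\dots,|\lambda_{p+q}|^{-1/2},1,\dots,1\bigr),
\]
which is real, diagonal and invertible, so $S^*=S$. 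Then $SDS^*$ has the required block form $\mathrm{diag}(Id_p,-Id_q,0_r)$, and $P=S\Pi U$ works. Since $D$ is similar to $M$, $p$ and $q$ are exactly the numbers of positive and negative eigenvalues of $M$ (counted with multiplicity), and $r$ is the multiplicity of $0$. As $M$ is Hermitian, hence diagonalizable, this multiplicity equals $\dim\ker M$.

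\textbf{Intrinsic meaning of $p,q,r$.} Although the statement as written only asks for existence with these values, the point of Theorem~\ref{sil} for the later classification is that the block form depends only on the *congruence class of $M$. The step I expect to need the most care is showing that $p,q,r$ are invariant under $M\mapsto PMP^*$ for an arbitrary invertible, non-unitary $P$.

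For $r$ this is immediate: $\ker(PMP^*)=(P^*)^{-1}\ker M$, so the kernel dimension is preserved.

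For $p$, characterize it as the maximal dimension of a subspace $V\subseteq\mathbb C^k$ on which the Hermitian form $h(x)=x^*Mx$ is positive definite. Suppose $V$ has dimension $p'>p$ and $h$ is positive definite on $V$. It must meet the span $W$ of the last $q+r$ eigenvectors nontrivially, since $p'+(q+r)>k$. But $h\le 0$ on $W$, which is a contradiction. Conversely, the span of the first $p$ eigenvectors gives a $p$-dimensional positive-definite subspace. Under $M\mapsto PMP^*$, positive-definite subspaces correspond bijectively via $x\mapsto (P^*)^{-1}x$, so $p$ is a *congruence invariant.

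Finally, $q=k-p-r$ is invariant as well.
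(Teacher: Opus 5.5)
The paper does not prove Theorem~\ref{sil}. It quotes the statement as the classical Sylvester law of inertia and then asserts, without argument, that $p,q,r$ are *congruence invariants. Your argument is the standard textbook proof, and it is correct.

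\begin{itemize}
\item The unitary diagonalization is at once a similarity and a *congruence. So $p$ and $q$ are read off as eigenvalue counts, and $r=\dim\ker M$ follows from diagonalizability.
\item In the invariance part, the dimension count $p'+(q+r)>k$ is right.
\item The direction of the correspondence $x\mapsto (P^*)^{-1}x$ is also right, since $x^*(PMP^*)x=h(P^*x)$.
\end{itemize}

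The only related argument in the paper is Remark~\ref{hermHS}, which recovers the regular case from the Horn--Sergeichuk algorithm. There, $C(M)=I$ forces every block to be $\pm\Delta_1$, and the signs have to be determined through the auxiliary matrix of Theorem~\ref{lemma5}. Your route differs in three ways:
\begin{itemize}
\item It is elementary.
\item It handles singular $M$ at once. In the Horn--Sergeichuk framework you would first have to note that, for Hermitian $M$, Step~1 of the regularization already gives $N=0$.
\item It identifies $p$ and $q$ with eigenvalue counts directly.
\end{itemize}

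Your second part is worth keeping. The invariance of $(p,q,r)$ is exactly what is needed, and not proved in the paper, to justify the count of non-equivalent structures $J_{n,k,r}$ stated after Theorem~\ref{teo:Hermitian}. That count also uses one further observation: if $M$ and $c\,PMP^*$ are both Hermitian and nonzero, then $c\in\mathbb R$, so the rescaling can at most swap $p$ and $q$.
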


Observe that the pair $(p,q)$ is the signature of $M$. Moreover, these three values remain invariant under *congruence transformation. 

\medskip

Sylvester's inertia law provides a direct method to classify 1-abelian complex structures when $M$ is a Hermitian matrix.  In particular, given $J_M$ with $M$ Hermitian, we can assume that $M$ is of the canonical form: 
\begin{equation}\label{Mherm}
    M=\begin{pmatrix}
        Id_p &  & \\
         & -Id_q & \\
        &  & 0_r
    \end{pmatrix},
    \end{equation}
where now $p+q=n-1-r$. We will identify $J_M$ (or directly $M$) with the tuple $(p, q, r)$.  Concerning complex structures under equivalence, the following result holds:

\begin{lemma}
The complex structures $(p, q, r)$ and $(q, p, r)$ are equivalent.
\end{lemma}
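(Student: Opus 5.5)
We will prove the statement directly from Lemma~\ref{equivalence}, which says that $J_M$ and $J_{\widetilde M}$ are equivalent exactly when $\widetilde M=c\,(PMP^*)$ for some $c\in\mathbb C\setminus\{0\}$ and $P\in GL(n-1,\mathbb C)$. So it suffices to take the canonical matrix $M$ of \eqref{Mherm} attached to $(p,q,r)$ and exhibit a pair $(c,P)$ carrying it to the canonical matrix attached to $(q,p,r)$. We will use both steps of the two-step process \eqref{2-step-M}: a rescaling by $c=-1$, followed by a *congruence given by a permutation matrix.

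First, set $c=-1$. Then
\[
-M=\begin{pmatrix} -Id_p & & \\ & Id_q & \\ & & 0_r\end{pmatrix}.
\]
This is again Hermitian, with $q$ positive eigenvalues, $p$ negative eigenvalues and kernel of dimension $r$. By Theorem~\ref{sil}, $-M$ is *congruent to the canonical form of type $(q,p,r)$.

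For an explicit witness we will take the permutation matrix $P$ that exchanges the first block of $p$ basis vectors with the following block of $q$ basis vectors and fixes the last $r$:
\[
P=\begin{pmatrix} 0 & Id_q & 0\\ Id_p & 0 & 0\\ 0&0&Id_r\end{pmatrix}.
\]
Since $P$ is a real orthogonal matrix, $P^*=P^{-1}$. A direct block computation then gives
\[
P(-M)P^*=\begin{pmatrix} Id_q & & \\ & -Id_p & \\ & & 0_r\end{pmatrix}.
\]
Hence $\widetilde M=(-1)\,PMP^*$, and Lemma~\ref{equivalence} yields the equivalence. Concretely, this corresponds to replacing $\omega^n$ by $-\omega^n$ and reordering $\omega^1,\dots,\omega^{n-1}$.

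No real obstacle is expected; the only point needing care is that the scalar $c$ is indispensable. *Congruence alone preserves the signature, so it can never exchange $p$ and $q$ when $p\neq q$. The rescaling is what allows it.
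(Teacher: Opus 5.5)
Your proposal is correct and is essentially the paper's argument: the paper also composes a *congruence with the rescaling $\sigma^n=-\omega^n$ (that is, $c=-1$). The only difference is that the paper obtains $M\sim_*\begin{pmatrix}-Id_q&&\\&Id_p&\\&&0_r\end{pmatrix}$ by citing Sylvester's law and rescales afterwards, whereas you rescale first and then write down the permutation matrix explicitly as the witness.
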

\begin{proof}
    Let $(p, q, r)$ be the 1-abelian complex structure determined by the matrix $M$ given by~\eqref{Mherm}.
Observe that $M$ belongs to the same *congruence class as $$M'=\begin{pmatrix}
        -Id_q &  & \\
         & Id_p & \\
        &  & 0_r
    \end{pmatrix},$$
   since both matrices have the same rank and the same signature. Now, in terms of the new basis $\{\sigma^i\}_{i=1}^n$ given by $\sigma^i = \omega^i$, for $i=1,\ldots, n-1$ and rescaling the last element as $\sigma^n=-\omega^n,$ we obtain a 1-abelian complex structure with matrix $-M'$, providing an equivalence between the structures $(p,q,r)$ and $(q,p, r)$. 
\end{proof}

Next, we establish the main result for 1-abelian complex structures in the Hermitian case, where we can always assume that $p\geq q$:

\begin{theorem}\label{teo:Hermitian}
    Let $(\mathfrak{g}, J)$ be a $2n$-dimensional NLA with a 1-abelian complex structure of type \eqref{ouracs} where $M$ is a Hermitian matrix. Then there exists a basis $\{\sigma^i\}_{i=1}^n$ of $\mathfrak{g}^{1,0}$ such that the complex structure equations are one of the following 
    \begin{equation}\label{eq:Hermitian}
    J_{n,k,r} = \begin{cases}
            d\sigma^1=...=d\sigma^{n-1}=0,\\
            d\sigma^n= \displaystyle{\sum_{j=1}^k \sigma^{j\bar j}}- \displaystyle{\sum_{j=k+1}^{n-1-r} \sigma^{j\bar j}},
    \end{cases}
\end{equation}
with $k=\lfloor \frac{n-1-r}{2} \rfloor, \dots, n-1-r$ if $n-r$ is odd, or  $k=\lfloor \frac{n-1-r}{2} \rfloor+1, \dots, n-1-r$ if $n-r$ is even, and where empty sums are understood to vanish (in particular when $k+1>n-1-r$).
\end{theorem}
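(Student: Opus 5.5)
The plan is to combine Sylvester's inertia law (Theorem~\ref{sil}) with the equivalence criterion of Lemma~\ref{equivalence} and the preceding lemma identifying $(p,q,r)$ with $(q,p,r)$.

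\emph{Reduction to normal form.} Start from $J_M$ with $M$ Hermitian and $M\neq 0$. Theorem~\ref{sil} gives $P\in GL(n-1,\mathbb C)$ with $PMP^*=\mathrm{diag}(Id_p,-Id_q,0_r)$, where $p+q+r=n-1$. Define the new basis by $\sigma=\omega P^{-1}$ (so that $\omega=\sigma P$) and $\sigma^n=\omega^n$, i.e.\ take $c=1$. By the transformation formula preceding Lemma~\ref{equivalence}, the structure equations become
$$d\sigma^n=\sum_{j=1}^p\sigma^{j\bar j}-\sum_{j=p+1}^{p+q}\sigma^{j\bar j}.$$
This is exactly $J_{n,k,r}$ with $k=p$ and $n-1-r=p+q$.

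\emph{Choosing $k\ge q$.} If $p<q$, apply the previous lemma: the structures $(p,q,r)$ and $(q,p,r)$ are equivalent, with the equivalence realised by a permutation of the $\sigma^j$ together with $\sigma^n\mapsto-\sigma^n$. So we may assume $k=p\ge q=n-1-r-k$, that is, $2k\ge n-1-r$.

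\emph{Translating into the stated range.} Put $m=n-1-r$. The condition $k\ge m-k$ means $k\ge\lceil m/2\rceil$, so $k$ runs from $\lceil m/2\rceil$ to $m$.
\begin{itemize}
\item If $n-r$ is odd, then $m$ is even and $\lceil m/2\rceil=\lfloor m/2\rfloor$, which is the first stated range.
\item If $n-r$ is even, then $m$ is odd and $\lceil m/2\rceil=\lfloor m/2\rfloor+1$, which is the second stated range.
\end{itemize}
The case $k=m$ has no negative terms, which matches the convention that empty sums vanish. Since $M\neq0$ we have $m\ge1$, although this is not needed for the normal form itself.

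The only step needing care is this parity bookkeeping, since the case $p=q$ occurs only when $m$ is even and must be kept in that case. The statement claims existence of a normal form, not that distinct listed forms are inequivalent, so no non-equivalence argument is required. If one wanted it, it would come from Lemma~\ref{equivalence}: $c$ must be real for $cPMP^*$ to remain Hermitian with real canonical form. A positive $c$ preserves the signature and a negative $c$ swaps $(p,q)$, so the unordered pair $\{p,q\}$ together with $r$ is an invariant.
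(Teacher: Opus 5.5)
Your proof is correct and takes the route the paper intends: Sylvester's inertia law brings $M$ to $\mathrm{diag}(Id_p,-Id_q,0_r)$, the lemma identifying $(p,q,r)$ with $(q,p,r)$ via $\sigma^n\mapsto-\sigma^n$ lets you impose $p\ge q$, and the condition $k\ge\lceil (n-1-r)/2\rceil$ translates by parity into the stated ranges (a step the paper leaves implicit). Your side remark is also correct: $c$ must be real, so $r$ and the unordered pair $\{p,q\}$ are equivalence invariants, and this justifies the paper's count of $\lfloor\frac{n+1-r}{2}\rfloor$ inequivalent structures.
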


Finally, let us determine the real Lie algebras underlying the complex structures $J_{n,k,r}$:
%\subsection{Underlying Lie algebras}
\begin{theorem}
    The real Lie algebras underlying equations \eqref{eq:Hermitian} are:
    \begin{equation}\label{gk}
\mathfrak{h}_{n,r} = \begin{cases}
de^1=\cdots = de^{2n-1}=0,\\
de^{2n} = \displaystyle{\sum_{k=1}^{n-1-r}e^{2k-1} \wedge e^{2k}}.
\end{cases}
\end{equation}
\end{theorem}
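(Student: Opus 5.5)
The plan is to take the complex structure equations \eqref{eq:Hermitian} and write the forms $\sigma^j$ in terms of a real basis. From there I read off the real differentials and then normalise the resulting real Lie algebra. Set $s=n-1-r$ and $\sigma^j=e^{2j-1}+i\,e^{2j}$ for $j=1,\dots,n-1$. These real 1-forms are closed, since $d\sigma^j=0$ for $j\le n-1$. Write $\sigma^n=\alpha+i\beta$ with $\alpha,\beta$ real. A direct computation gives
\[
\sigma^{j\bar j}=(e^{2j-1}+ie^{2j})\wedge(e^{2j-1}-ie^{2j})=-2i\,e^{2j-1}\wedge e^{2j}.
\]
Substituting into \eqref{eq:Hermitian} we get
\[
d\sigma^n=-2i\Big(\sum_{j=1}^k e^{2j-1}\wedge e^{2j}-\sum_{j=k+1}^{s}e^{2j-1}\wedge e^{2j}\Big),
\]
which is purely imaginary. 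Hence $d\alpha=0$ and $d\beta=-2\big(\sum_{j\le k}e^{2j-1,2j}-\sum_{k<j\le s}e^{2j-1,2j}\big)$.

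Next I choose the real basis. Set $e^{2n-1}=\alpha$ and $e^{2n}=-\tfrac12\beta$, so that $de^{2n}=\sum_{j\le k}e^{2j-1}\wedge e^{2j}-\sum_{k<j\le s}e^{2j-1}\wedge e^{2j}$. The minus signs are then removed by a real change of basis: for $j=k+1,\dots,s$ replace $e^{2j}$ by $-e^{2j}$, which turns $-e^{2j-1}\wedge e^{2j}$ into $+e^{2j-1}\wedge e^{2j}$. This is a real automorphism of $\mathfrak g^*$ that preserves closedness of all the other forms. After it, the structure equations become exactly \eqref{gk} for $\mathfrak h_{n,r}$. In particular the underlying Lie algebra depends only on $n$ and $r$, and not on $k$.

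Finally I check that the algebras are pairwise non-isomorphic for distinct $r$, so the list is not redundant. The 2-form $de^{2n}$ has rank $2(n-1-r)$, i.e. the bracket on $\mathfrak g/Z$ is a symplectic form of that rank. Equivalently, $\dim Z(\mathfrak h_{n,r})=2+2r$ when $s\ge1$; note $s\ge1$ holds because $M\neq0$. Since the center dimension is an isomorphism invariant, different $r$ give non-isomorphic algebras. This is the Heisenberg algebra $\mathfrak h_{2s+1}$ times $\mathbb R^{2r+1}$.

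The main (mild) obstacle is bookkeeping. One must get the correct constant in $\sigma^{j\bar j}$ and verify that the indefinite signs can be absorbed by a real basis change. This works precisely because over $\mathbb R$ the forms $e^{12}$ and $-e^{12}$ are equivalent, whereas complex-linearly the signature $(k,s-k)$ is an invariant of $J$, not of $\mathfrak g$.
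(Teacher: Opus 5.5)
Your proof is correct and is essentially the paper's argument. The paper writes $d\sigma^n=\sum_j\varepsilon_j\sigma^{j\bar j}$ and absorbs the signs directly into the real basis via $\sigma^j=\varepsilon_j e^{2j-1}+i\,e^{2j}$, which is the same device as your sign flip of $e^{2j}$ for $j>k$.

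Your normalization $\sigma^n=e^{2n-1}-2i\,e^{2n}$ is in fact the right constant: the paper's $\sigma^n=e^{2n-1}-\tfrac{i}{2}e^{2n}$ yields $de^{2n}=4\sum_k e^{2k-1}\wedge e^{2k}$, which is harmless after rescaling. Your center-dimension check that distinct $r$ give non-isomorphic algebras is a correct addition the paper does not spell out. One small notational point: in the paper's convention, where $\mathfrak h_m$ has dimension $2m+1$, your Heisenberg factor is $\mathfrak h_{n-1-r}$, not $\mathfrak h_{2s+1}$.
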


\begin{proof}
For convenience, let us express $d\sigma^n$ in \eqref{eq:Hermitian} as follows:
$$d\sigma^n= \displaystyle{\sum_{j=1}^{n-1-r} \varepsilon_j\sigma^{j\bar{j}}},$$
where $\varepsilon_j = \pm 1$.
To obtain equations \eqref{gk}, just consider the following real basis:
\begin{eqnarray*}\sigma^j &=& \varepsilon_j\,e^{2j-1} + i\,e^{2j},\quad j=1,\ldots, n-1,\\
\sigma^{n}& = &e^{2n-1} - i\,\frac{e^{2n}}{2}.
\end{eqnarray*}
\end{proof}

It is worth remarking that there exists only one Lie algebra for each $r$.  Moreover, the number of non-equivalent complex structures is $\lfloor \frac{n+1-r}{2} \rfloor$.

\begin{remark}
    The Lie algebras $\mathfrak h_{n,r}$ given by \eqref{gk} are related to the well-known Heisenberg Lie algebras.  Recall that the \textit{n-dimensional Heisenberg algebra} (\cite{Y} Definition 1.1), denoted by $\mathfrak{h}_n$ is a $(2n+1)$-dimensional Lie algebra for which there exists a basis $\{e_1,e_2,\dots,e_{2n},e_{2n+1}\}$ with bracket relations given by $$[e_i, e_{2n+1}]=[e_{2n+1},e_{2n+1}]=0;$$ $$[e_i,e_j]=\begin{cases}
    0 \quad \text{ if }j \neq n+i.\\
    e_{2n+1} \quad \text{ if }j=n+i
\end{cases}.$$
%Considering the dual basis and the differential operator, then $\mathfrak{h}_n$ can be expressed in terms of the differentials of the elements of the dual basis as follows:
%$$de^{2n+1}=\sum_{i=1}^n e^i \wedge e^{i+n},$$ $$de^j=0 \quad \text{ for all }1\leq j \leq 2n.$$
The relation between Lie algebras $\mathfrak{h}_{n,r}$ given by~\eqref{gk} and the Heisenberg algebras is the following:  $$\mathfrak{h}_{n,0}=\mathfrak{h}_{n-1} \oplus \mathfrak{a}_1$$ and, in the general case
\begin{equation}\label{eq:relation-heis}
\mathfrak{h}_{n,r}=\mathfrak{h}_{n-r,0}\oplus \mathfrak{a}_{2r}=\mathfrak{h}_{n-r-1} \oplus \mathfrak{a}_{2r+1},
\end{equation} where $\mathfrak{a}_{k}$ is the abelian Lie algebra of dimension $k$.

%In~\cite{Sant}, the betti numbers of $\mathfrak{h}_n$ are calculated.  In particular, $$\dim H^{m}(\mathfrak{h}_n) = \begin{pmatrix}
%2n\\ m
%\end{pmatrix} - \begin{pmatrix}
%2n\\ m-2
%\end{pmatrix}, \quad m\leq n.$$  Applying the Künneth formula for de Rham cohomology, it is possible to compute the betti numbers for the Lie algebras $\mathfrak{h}_{n,r}$.
\end{remark}
%
%\begin{proposition}
%Let us denote by $b_i^n = \dim H^{m}(\mathfrak{h}_n)$ and $\beta_i^{n,r} = \dim H^{m}(\mathfrak{h}_{n,r})$.  Then:
%$$\beta_i^{n,r} = \sum_{p+q=i} \left(\begin{pmatrix}
%2(n-r-1)\\ p
%\end{pmatrix} - \begin{pmatrix}
%2(n-r-1)\\ p-2
%\end{pmatrix}\right) \begin{pmatrix}
%2r+1\\q 
%\end{pmatrix},$$ whenever $p\leq n-r-1$ and $q\leq 2r+1$.
%\end{proposition}

Finally, there also exist non-Hermitian matrices producing complex structures that are equivalent to the ones defined by Hermitian matrices.  In particular:

\begin{corollary}\label{rmk:multiple-Hermitian}
If a 1-abelian complex structure $J_M$ is given by a matrix $M$ that is a multiple of a Hermitian matrix $N$, i.e. $M = c\,N$, then $J_M\sim J_N\sim J_{n,k,r}$ for the tuple $(n,k,r)$ associated to $N$.
\end{corollary}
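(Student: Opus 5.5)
The corollary follows by applying Lemma~\ref{equivalence} twice and then Theorem~\ref{teo:Hermitian}. Suppose $M = c\,N$ with $N = N^*$ Hermitian. Since $M$ is nonzero (as a 1-abelian structure requires), we have $N\neq 0$ and $c \in \mathbb{C}\setminus\{0\}$.

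The first step is $J_M\sim J_N$. Take $P = Id_{n-1}$ and scalar $c^{-1}$ in Lemma~\ref{equivalence}. This gives
\[
c^{-1}\,(Id\, M\, Id^*) = c^{-1} c\, N = N,
\]
so $J_M$ and $J_N$ are equivalent. Concretely, this amounts to keeping $\sigma^i=\omega^i$ for $i\le n-1$ and rescaling the last form as $\sigma^n = c^{-1}\omega^n$, which is step (ii) of \eqref{2-step-M}.

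The second step is $J_N\sim J_{n,k,r}$. Since $N$ is Hermitian, Theorem~\ref{sil} provides $P\in GL(n-1,\mathbb{C})$ such that $PNP^*$ equals the canonical form \eqref{Mherm}. Here $(p,q)$ is the signature of $N$ and $r=\dim\ker N$. Lemma~\ref{equivalence} with scalar $1$ then gives $J_N\sim J_{(p,q,r)}$. If $p<q$, the preceding lemma (composing the rescaling $\sigma^n=-\omega^n$ with the *congruence reordering the blocks) gives $J_{(p,q,r)}\sim J_{(q,p,r)}$, so we may assume $p\ge q$. Setting $k=p$, the structure is exactly $J_{n,k,r}$ of \eqref{eq:Hermitian}. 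The range of $k$ matches the statement of Theorem~\ref{teo:Hermitian}, since $p\ge q$ and $p+q=n-1-r$.

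Finally, equivalence of complex structures is transitive: composing the $\mathbb{C}$-linear isomorphisms commuting with $d$ (equivalently, composing the pairs $(c,P)$, which form a group) gives $J_M\sim J_N\sim J_{n,k,r}$.

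No step is a genuine obstacle. The only subtlety is well-definedness of the tuple attached to $N$. The factorization $M=cN$ is not unique: replacing $(c,N)$ by $(-c,-N)$ swaps $p$ and $q$, and replacing it by $(tc,\,t^{-1}N)$ with $t>0$ changes nothing. These are the only alternatives, since if $N$ and $\lambda N$ are both Hermitian and nonzero then $\lambda\in\mathbb{R}$. Any of these changes leaves the normalized tuple with $p\ge q$ unchanged, so the tuple $(n,k,r)$ associated to $N$ is well defined.
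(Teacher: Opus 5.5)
Your proof is correct and follows the paper's route. The paper's one-line argument is exactly your first step, rescaling the last basis element (Lemma~\ref{equivalence} with $P=Id$ and scalar $c^{-1}$), combined with Theorem~\ref{teo:Hermitian} applied to $N$; you unpack that theorem via Sylvester's law and the $(p,q,r)\sim(q,p,r)$ lemma, and your check that the tuple does not depend on the factorization $M=cN$ is a correct refinement the paper leaves implicit.
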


\begin{proof}
Observe that $M = c\,N$ implies that $J_M\sim J_N$, rescaling the last element of the basis adapted to $J_M$.
\end{proof}

%%%%%%%%%%%%%%%%%%%%%%%%%%%%%%%%%%%%%%%%%%%%%%%%%%%%%%%%%

\section{Non-Hermitian matrices: Revisiting the Horn–Sergeichuk theory of *congruence}\label{sec:3}
In this section we deal with the classification of matrices up to *congruence (see Definition~\ref{def:matrices-congruentes}). 
In their influential works, Horn and Sergeichuk, \cite{HS1, HS2}, developed a systematic 
description of this relation and determined the canonical forms that 
complex square matrices can admit under *congruence. 
Their results provide a complete classification framework which serves as a powerful tool in the study of equivalence problems for matrices over the complex field. %{\color{red}Our objective in this section is to review the theorems by Horn and Sergeichuk about classification of square matrices module *congruence and try to adapt and apply then into our classification problem.}
Our objective in this section is to review the theorems and constructions by Horn and Sergeichuk about classification of square matrices modulo *congruence for the regular and the singular case, completing some missing details.

Their main theorem is the following: 
\begin{theorem}\label{HS}\cite[Theorem 1 (b)]{HS1}: Each square complex matrix $A$ is *congruent to a matrix $B$, that is a direct sum, uniquely determined up to permutation of summands, of canonical matrices of the following three types: \begin{itemize}
    \item \textbf{Type 0}: $J_n(0)$;
    \item \textbf{Type I}: $\lambda\Delta_n$ with $|\lambda|=1$;
    \item \textbf{Type II}: $H_{2n}(\mu)$ with $|\mu|>1$;
\end{itemize}
where 

\[
J_n(\lambda) =
\begin{bmatrix}
\lambda & 1 &  &   & 0 \\
 & \lambda & 1 &  &   \\
 &   & \ddots & \ddots &   \\
 &   &   & \lambda & 1 \\
0 &   &   &   & \lambda
\end{bmatrix}
\quad (J_1(\lambda) = [\lambda]);
\]

\[
\Delta_n =
\begin{bmatrix}
0 &        &        & 1\\
  &  &   \iddots     & i \\
  & 1       & \iddots &  \\
1 & i &  & 0
\end{bmatrix}.
\quad (\Delta_1=[1]);
\]

\[
H_{2n}(\mu) =
\begin{bmatrix}
0 &  I_n   \\
J_n(\mu)  &  0

\end{bmatrix}.
\quad (H_2(\mu)=\begin{bmatrix}
0 &  1   \\
\mu  &  0

\end{bmatrix}).
\]
\end{theorem}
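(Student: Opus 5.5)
The plan follows the standard Horn–Sergeichuk strategy: reduce the *congruence problem for $A$ to a similarity problem for its cosquare, then treat the singular part separately.

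\textbf{Step 1: regularizing decomposition.} First I would show that $A$ is *congruent to $R\oplus S$, where $R$ is nonsingular and $S$ is a direct sum of blocks $J_k(0)$. Following the quiver/regularization approach of Horn and Sergeichuk, I would view $A$ as a sesquilinear form on $V=\mathbb C^n$. I would then iterate on the pair of subspaces $\ker A$ and $\ker A^*$: choosing bases adapted to the chains $V\supseteq AV\cap\cdots$, the singular part splits off into nilpotent Jordan blocks. This is a finite reduction that terminates by dimension. Uniqueness of $S$ would follow because the sizes of the $J_k(0)$ summands are determined by ranks of products such as $\operatorname{rank}(A(A^{*})^{-}\cdots)$. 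In practice I expect to cite this part from \cite{HS1} rather than reprove it.

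\textbf{Step 2: the regular part via the cosquare.} For nonsingular $A$, define the cosquare $\mathcal C(A)=A^{-*}A$. If $B=PAP^*$, then
\[
\mathcal C(B)=P^{-*}\mathcal C(A)P^*,
\]
so the cosquare transforms by similarity. Moreover, $\mathcal C(A)$ is similar to $\mathcal C(A)^{-*}$, so its eigenvalues occur in pairs $\mu,1/\bar\mu$. Eigenvalues with $|\mu|=1$ are self-paired; those with $|\mu|\neq1$ pair off.

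One checks directly that $\mathcal C(H_{2k}(\mu))$ is similar to $J_k(\mu)\oplus J_k(1/\bar\mu)$. One also checks that $\mathcal C(\lambda\Delta_k)$ is similar to $J_k(\lambda^2)$ up to the normalization $|\lambda|=1$; the point is that $\Delta_k^{-*}\Delta_k$ is unipotent with a single Jordan block. Existence then follows from the Jordan form of $\mathcal C(A)$: assign Type II blocks to pairs off the unit circle and Type I blocks to Jordan blocks on the circle.

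\textbf{Step 3: the main obstacle.} The hard step is the converse, namely that the cosquare determines $A$ up to *congruence except for the sign ambiguity $\pm\lambda$ in Type I. For this I would use the Horn–Sergeichuk lemma: if $A$ and $B$ are nonsingular with similar cosquares, then $A$ is *congruent to a matrix of the form $\bigoplus \varepsilon_i\lambda_i\Delta_{k_i}\oplus(\text{Type II})$ with $\varepsilon_i=\pm1$.

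The sign invariants must then be controlled. Taking $\lambda$ with $\lambda^2$ a fixed eigenvalue, the signs attached to blocks of a given size $k$ are determined by the inertia of an associated Hermitian form on the generalized eigenspace. That form is induced by $\bar\lambda A$ restricted to the top layer $\ker(\mathcal C-\lambda^2)^k/\ker(\mathcal C-\lambda^2)^{k-1}$, and Sylvester's law (Theorem~\ref{sil}) fixes how many $+\lambda$ versus $-\lambda$ blocks occur.

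I expect the bookkeeping of this sign argument to be the crux. It requires showing that the induced form is well defined and Hermitian on each layer, which I would verify by direct computation with $\Delta_k$.
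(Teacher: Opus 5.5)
Your overall route is the one the paper relies on. The paper gives no proof of its own and cites \cite{HS1}, but Section~\ref{sec:3} reviews exactly your ingredients: regularization (Theorem~\ref{singular}), similarity of cosquares, Lemma~\ref{simil-cosquare}, and Theorem~\ref{lemma5}. Like the paper, you cite the singular part, which is fine. Your observation that $C(A)$ is similar to $(C(A)^*)^{-1}$ (indeed $(C(A)^*)^{-1}=A\,C(A)A^{-1}$) is sharper than Lemma~\ref{eigenvalue}. It gives $\mu$ and $1/\bar\mu$ the same Jordan structure, which step (a) of the paper's algorithm uses tacitly. Two steps, however, fail as written.

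\textbf{Existence.} Theorem~\ref{lemma5} does not say that $A$ is *congruent to a signed sum of canonical blocks. It only gives $A\sim_*(-D_-)\oplus D_+$ and $B\sim_* D_-\oplus D_+$, with $D_\pm$ uncontrolled. Your Step~2 claim that existence ``follows from the Jordan form'' is also premature, since $[i]$ and $[-i]$ have the same cosquare. You need an induction on the size:
\begin{itemize}
\item take $B$ canonical with $C(B)\sim_{sim}C(A)$;
\item if $k=0$ or $k=n$, then $A\sim_*\pm B$;
\item if $0<k<n$, apply the statement to the smaller nonsingular matrices $D_\pm$;
\item conclude using that $-\lambda\Delta_m$ is again of Type~I and $-H_{2m}(\mu)\sim_*H_{2m}(\mu)$ (Lemma~\ref{simil-cosquare}(ii)).
\end{itemize}
The paper's step (c) glosses over the same point.

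\textbf{Uniqueness of signs.} The sign invariant is not well defined as you describe it once $k\ge 2$. Take $A=\Delta_2$, $\lambda=1$. Then $C(A)=\begin{pmatrix}1&2i\\0&1\end{pmatrix}$, $N:=C(A)-I$ and $\ker N=\langle e_1\rangle$, yet $e_2^*\Delta_2e_1=1\neq 0$. So $(x,y)\mapsto\bar\lambda\,x^*Ay$ does not descend to $\ker N^2/\ker N$. It is not Hermitian either, since $e_2^*\Delta_2e_2=i$. The ``direct computation with $\Delta_k$'' you plan would expose this.

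The missing idea is to twist by the nilpotent part. Let $N=C(A)-\lambda^2$ on the generalized $\lambda^2$-eigenspace and set $\phi_k(x,y)=i^{k-1}\bar\lambda^{2k-1}\,x^*AN^{k-1}y$ on $\ker N^k$. Using $C(A)^*A\,C(A)=A$ and $A^*=A\,C(A)^{-1}$, one checks three things:
\begin{itemize}
\item $\phi_k$ is Hermitian;
\item its radical is $\ker N^{k-1}+N\ker N^{k+1}$, so it is nondegenerate modulo that;
\item $\phi_k^{PAP^*}\bigl((P^*)^{-1}u,(P^*)^{-1}v\bigr)=\phi_k^{A}(u,v)$, so it is a *congruence invariant.
\end{itemize}
For $k=1$ it is your $\bar\lambda A$, and on $\pm\Delta_2$ with $\lambda=1$ it takes the value $\mp 2$ on $e_2$. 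Its inertia then counts the $+\lambda\Delta_k$ versus $-\lambda\Delta_k$ summands via Theorem~\ref{sil}, as you intended. With this repair, your Step~3 supplies the uniqueness of signs, which the paper only cites.
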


\medskip

 In particular, every regular matrix is *congruent to a direct sum of Type I and Type II blocks and this decomposition is unique up to permutation of summands. The blocks of Type 0 control the singular part of the matrix.

\medskip

In the following subsections we will review separately the algorithms developed by  Horn and Sergeichuk to determine the *congruence canonical form of a given matrix $A$, depending on whether it is regular or not.   We will explain how to construct the matrix $B$ in the previous theorem obtaining in this way our matrices $M'$ in \eqref{2-step-M}.

\subsection{Canonical form of *congruence for regular matrices}\label{section3.1}

Here, we review and complete some missing details in \cite{HS1}. We start by introducing a concept that will be crucial in the development of the theory:

\begin{definition}\label{*cosquare}
    Let $A \in GL(n, \mathbb{C})$ be a regular matrix.   Its *\textit{cosquare} $C(A)$ is defined as 
    \begin{equation}\label{eq:cosquare}C(A):=(A^*)^{-1}A.\end{equation}
\end{definition}

%\begin{remark}\label{hermrec}
    From this definition, a regular matrix $A$ is Hermitian if and only if $C(A)=I_n$.
%\end{remark}
 So one can think of the *cosquare as a measure of the non-hermitianity of a matrix.  Moreover, $C(A)$ is also a regular matrix since $A$ is so. 

\medskip

The next result is a simple fact about linear algebra but will be relevant in the future: 
\begin{lemma}\label{eigenvalue}
    Let $A \in GL(n, \mathbb{C})$ be a regular matrix.   If $z$ is an eigenvalue of $C(A)$, then $\frac{1}{\overline{z}}$ is also an eigenvalue of $C(A)$.
\end{lemma}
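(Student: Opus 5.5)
The plan is to compute the inverse of the *cosquare directly and compare it with the conjugate transpose of $C(A)$. From \eqref{eq:cosquare} we have $C(A)=(A^*)^{-1}A$. Since $A$ is regular, $C(A)$ is invertible, and
\[
C(A)^{-1}=A^{-1}A^*.
\]
Taking the conjugate transpose of $C(A)$ itself gives
\[
C(A)^*=A^*\bigl((A^*)^{-1}\bigr)^*=A^*A^{-1}.
\]
The two matrices $A^{-1}A^*$ and $A^*A^{-1}$ are similar, via conjugation by $A$:
\[
A\,(A^{-1}A^*)\,A^{-1}=A^*A^{-1}.
\]
Hence $C(A)^{-1}$ is similar to $C(A)^*$, and so the two have the same characteristic polynomial and the same eigenvalues.

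Now fix an eigenvalue $z$ of $C(A)$. Since $C(A)$ is regular, $z\neq 0$. The eigenvalues of $C(A)^{-1}$ are exactly the inverses of the eigenvalues of $C(A)$, so $1/z$ is an eigenvalue of $C(A)^{-1}$. By the similarity above, $1/z$ is then an eigenvalue of $C(A)^*$.

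The eigenvalues of $N^*$ are the complex conjugates of those of $N$, because $\det(N^*-\lambda I)=\overline{\det(N-\bar\lambda I)}$. Applying this with $N=C(A)$, the conjugate $\overline{1/z}=1/\bar z$ is an eigenvalue of $C(A)$, which is the claim.

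A variant of the argument also tracks multiplicities. The characteristic polynomial $p$ of $C(A)$ coincides with that of $(C(A)^*)^{-1}$, so the roots of $p$ are closed under $z\mapsto 1/\bar z$ with multiplicity. This version may be useful later for pairing Type II blocks.

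None of the steps is a real obstacle. The only point needing care is the similarity $A^{-1}A^*\sim A^*A^{-1}$, which is what links the inverse to the adjoint; everything else is standard linear algebra.
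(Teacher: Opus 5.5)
Your proof is correct, but it takes a different route from the paper's.

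The paper argues with the pencil $A-zA^*$. From $C(A)v=zv$ it gets $\det(A-zA^*)=0$, and taking the conjugate transpose gives $\det(A^*-\bar zA)=0$. It then factors out $(-\bar z)^n$ to reach $\det(A-\bar z^{-1}A^*)=0$, and finally multiplies by $(A^*)^{-1}$.

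You instead prove a matrix-level fact: $C(A)^{-1}=A^{-1}A^*$ is similar, via $A$, to $A^*A^{-1}=C(A)^*$. Only then do you pass to eigenvalues.

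The paper's computation is a short determinant manipulation, but it only shows that $1/\bar z$ is a root. Your similarity, equivalently $C(A)\sim_{sim}(C(A)^*)^{-1}$, is strictly stronger. Beyond equal algebraic multiplicities, which you note, it forces the Jordan blocks of $C(A)$ at $z$ and at $1/\bar z$ to have the same sizes. This holds because $(J_k(\lambda)^*)^{-1}\sim_{sim}J_k(\bar\lambda^{-1})$ for any $\lambda\neq 0$, exactly as in the computation in the proof of Lemma~\ref{simil-cosquare}(i).

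That pairing is what step (a) of the regular-matrix algorithm tacitly relies on. There the paper asserts that $\bar\mu_i^{-1}$ has the same multiplicity as $\mu_i$ and pairs $J_{m_i}(\mu_i)$ with $J_{m_i}(\bar\mu_i^{-1})$. The paper's proof of this lemma does not supply that by itself; your version does.
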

\begin{proof}
    Let $z$ be an eigenvalue for $C(A)$ and $v$ a corresponding eigenvector. Then $C(A)v=zv$. Using \eqref{eq:cosquare} we obtain: \[(A^*)^{-1}Av=zv \Longleftrightarrow Av=zA^*v \Longleftrightarrow (A-zA^*)v=0.\]
    As this is verified for any eigenvector associated to $z$, then $det(A-zA^*)=0$. Applying that for a general complex matrix $\overline{det(M)}=det(\overline M)$, we obtain that $det((A-zA^*)^*)=det(A^*-\bar zA)=0.$  Now,
    \[det(A^*-\bar zA)=0 \Longleftrightarrow det\left(-\bar z(A-\frac{1}{\overline{z}}A^*)\right)=(-\bar z)^ndet\left(A-\frac{1}{\overline{z}}A^*\right)=0.\]
    As $z\neq 0$, since $A \in GL(n, \mathbb{C})$, we get that $det(C(A)-\frac{1}{\overline{z}}I_n)=0$, so $\frac{1}{\overline{z}}$ is an eigenvalue of $C(A)$.
\end{proof}

\begin{remark}\label{remark-eigenvalues}
Note that $0$ cannot be an eigenvalue of $C(A)$ because it is not a singular matrix. The previous lemma tells us that in the set $sp(C(A))$ there are two families of elements: on the one hand, we have pairs of eigenvalues $\left(z, \frac{1}{\overline{z}}\right)$ where $|z| \neq 1$ and, on the other hand the eigenvalues $w$ with $|w|=1$ (it is straightforward that $|w|=1 \Rightarrow w=\frac{1}{\overline{w}})$. These two sets will play a key role to determine the *congruence canonical form of a regular matrix $A$. 
\end{remark}

In the following results, the relation between the type of *congruence of $M$ and $C(M)$ is studied.  Recall that two matrices $A, B \in GL(n, \mathbb{C})$ are \textit{similar} if there exists another $M \in GL(n, \mathbb{C})$ such that $A=M^{-1}BM$. The canonical forms under this equivalence are the Jordan forms.

\begin{lemma}\label{cosquaresim}
    If $A$ is *congruent to $B$, then $C(A)$ is similar to $C(B)$.
\end{lemma}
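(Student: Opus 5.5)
Since the *cosquare $C(\cdot)$ is only defined for regular matrices, we assume (as in Definition~\ref{*cosquare}) that $A$ and $B$ are in $GL(n,\mathbb{C})$; any matrix *congruent to a regular matrix is again regular, because $\det(PAP^*)=\det(P)\,\overline{\det(P)}\,\det(A)\neq 0$. The plan is a direct computation: start from the relation $B=PAP^*$ with $P\in GL(n,\mathbb{C})$, write down $C(B)$ using Definition~\ref{*cosquare}, and exhibit an explicit matrix that conjugates $C(B)$ into $C(A)$.

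First compute $B^*$. Since $(XYZ)^*=Z^*Y^*X^*$ and $(P^*)^*=P$, we get $B^*=(PAP^*)^*=PA^*P^*$. Inverting, and noting that $(P^*)^{-1}=(P^{-1})^*$, gives
\[
(B^*)^{-1}=(P^*)^{-1}(A^*)^{-1}P^{-1}.
\]
Multiplying on the right by $B=PAP^*$, the middle factors $P^{-1}P$ cancel:
\[
C(B)=(B^*)^{-1}B=(P^*)^{-1}(A^*)^{-1}AP^*=(P^*)^{-1}\,C(A)\,P^*.
\]

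Finally, set $Q:=P^*\in GL(n,\mathbb{C})$. The identity above becomes $C(B)=Q^{-1}C(A)Q$, equivalently $C(A)=QC(B)Q^{-1}$. This is exactly the notion of similarity recalled before the statement, so $C(A)$ and $C(B)$ are similar.

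There is no genuine obstacle here. The only points requiring care are keeping the factors in the correct order when taking adjoints and inverses of the product, and checking that the conjugating matrix $P^*$ is invertible, which holds because $P$ is.
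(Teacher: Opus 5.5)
Your proof is correct and follows the paper's own argument: both compute the *cosquare of the product $PAP^*$ directly and obtain $C(PAP^*)=(P^*)^{-1}C(A)P^*$, so that $P^*$ conjugates one *cosquare into the other. The paper writes $A=MBM^*$ instead, which only swaps the roles of $A$ and $B$. Your remark that *congruence preserves regularity, so both *cosquares are defined, is a small point the paper leaves implicit.
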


\begin{proof}
    If $A$ is *congruent to $B$, then there exists $M \in GL(n, \mathbb{C})$ such that $A=MBM^*$ and hence $$C(A)=C(MBM^*)=((MBM^*)^*)^{-1}(MBM^*)=(M^*)^{-1}C(B)M^*$$
    So choosing $S=M^*$, we have that $C(A)=S^{-1}C(B)S$ and therefore, $C(A)$ and $C(B)$ are similar.
\end{proof}
However, the other direction is not always true, as it is shown in the next example
\begin{example}(See \cite[Section 3]{HS1}):
    Consider $M=\begin{pmatrix}
        i
    \end{pmatrix}$ and $M'=\begin{pmatrix}
        -i
    \end{pmatrix}$ both as $1\times 1$-matrices. It is straightforward to show that $C(M)=C(M')=\begin{pmatrix}
        -1
    \end{pmatrix}$ however there cannot exist a matrix $S=\begin{pmatrix}
        k
    \end{pmatrix}$, with $k\in\mathbb C^*$, such that $SMS^*=M'$. Indeed, $SMS^*=\begin{pmatrix}
        k
    \end{pmatrix}\begin{pmatrix}
        i
    \end{pmatrix}\begin{pmatrix}
        \bar k
    \end{pmatrix}=\begin{pmatrix}
        |k|^2i
    \end{pmatrix} \neq \begin{pmatrix}
        -i
    \end{pmatrix}$.  
\end{example}

In the previous example, $M\not\sim_*M'$ but $M\sim_*-M'$.  This ``-" will be of special relevance as the next result shows:

\begin{theorem}\label{lemma5}\cite[Lemma 5]{HS1}: Let $A,B\in GL(n,\mathbb C)$ be matrices with similar *cosquares, that is, $C(A)=S^{-1}C(B)S$ for some non-singular S. Let $B_S:=S^{*}BS$, let $M:=B_SA^{-1}$, and suppose that $M$ has k real negative eigenvalues, counted according to their algebraic multiplicities $(0\leq k\leq n)$. Then: \begin{enumerate}
    \item $M$ is similar to a real matrix.
    \item There are square complex matrices $D_{-}$ and $D_{+}$ of size $k$ and $n-k$, respectively, such that $A$ is *congruent to $(-D_-)\oplus D_+$ and $B$ is *congruent to $D_- \oplus D_+$. 
\end{enumerate}
\end{theorem}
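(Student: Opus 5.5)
The plan is to extract from the hypothesis a single commutation identity for $M$, and then build a polynomial square root of (a sign-corrected) $M$ that realizes the *congruence.

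\textbf{Step 1: reduce to equal cosquares.} A direct computation, as in the proof of Lemma~\ref{cosquaresim}, gives $C(B_S)=S^{-1}C(B)S=C(A)$, so $(A^*)^{-1}A=(B_S^*)^{-1}B_S$. Since $B\sim_* B_S$, it suffices to work with $A$ and $B_S$.

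\textbf{Step 2: the key identity and part (1).} By definition $MA=B_S$. Rearranging $C(A)=C(B_S)$ gives $B_S^*(A^*)^{-1}=B_SA^{-1}=M$, hence $MA^*=B_S^*$. Taking adjoints of $MA=B_S$ gives $A^*M^*=B_S^*$. Comparing the two expressions for $B_S^*$ yields
\[
MA^*=A^*M^*, \qquad\text{equivalently}\qquad AM^*=MA .
\]
Thus $M^*=(A^*)^{-1}MA^*$, so $M$ is similar to $M^*$, and hence to $\overline M$. Consequently the nonreal eigenvalues of $M$ occur in conjugate pairs with identical Jordan structure. A matrix with this property is similar to a real matrix (real Jordan form), which proves (1).

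The identity propagates to polynomials: for every polynomial $q$ with \emph{real} coefficients, $q(M)A^*=A^*q(M)^*$ and $Aq(M)^*=q(M)A$.

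\textbf{Step 3: splitting off the negative eigenvalues.} Let $V_-$ be the sum of the generalized eigenspaces of $M$ for its negative real eigenvalues, so $\dim V_-=k$, and let $V_+$ be the sum of the remaining generalized eigenspaces. The spectral projection $E$ onto $V_-$ along $V_+$ is a polynomial in $M$. The set of negative real eigenvalues is closed under conjugation, so this polynomial can be chosen with real coefficients. Set $N:=(I-2E)M$, again a real polynomial in $M$. Then $N$ acts as $-M$ on $V_-$ and as $M$ on $V_+$, so $N$ has no eigenvalues in $(-\infty,0]$ (recall $M$ is invertible).

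\textbf{Step 4: a real polynomial square root.} This is where I expect the main difficulty. I would take the primary square root $R$ of $N$ given by the principal branch. It is a Hermite-interpolation polynomial in $N$, and the interpolation data are conjugation-symmetric because $\sqrt{\bar z}=\overline{\sqrt z}$ off the negative axis. Averaging the polynomial with its conjugate-coefficient version therefore yields a real-coefficient polynomial $p$ with $R=p(N)$ and $R^2=N$, and $R$ is invertible. Since $R$ is a real polynomial in $M$, Step 2 gives $AR^*=RA$. Hence
\[
RAR^*=R^2A=NA=(I-2E)MA=(I-2E)B_S ,
\]
so $A\sim_*(I-2E)B_S$.

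\textbf{Step 5: block-diagonalization.} Using $AE^*=EA$ and $EM=ME$, we get
\[
EB_S(I-E)^*=EMA(I-E^*)=EM(I-E)A=0,
\]
and similarly $(I-E)B_SE^*=0$. Choose $T$ whose rows are adapted so that $TET^{-1}=I_k\oplus 0$; replacing $E$ by $T^{-1}(I_k\oplus 0)T$ in these vanishings gives $TB_ST^*=D_-\oplus D_+$, with blocks of sizes $k$ and $n-k$. The same change of basis gives $T(I-2E)B_ST^*=(-D_-)\oplus D_+$. Therefore
\[
A\sim_*(-D_-)\oplus D_+, \qquad B\sim_* B_S\sim_* D_-\oplus D_+ ,
\]
which is (2).
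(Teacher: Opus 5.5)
Your proof is correct.

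\textbf{Part (1).} This is the same as the paper's sketch. Both derive $M^*=A^{-1}MA$ from $C(A)=C(B_S)$; you phrase it as the intertwining $AM^*=MA$. You then justify, via $M\sim\overline M$ and the real Jordan form, the step that the paper only asserts: a matrix similar to its conjugate transpose is similar to a real matrix.

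\textbf{Part (2).} The paper gives no argument here and simply defers to Horn--Sergeichuk, so your Steps 3--5 supply a self-contained proof. Your key observation is that every real-coefficient polynomial $q(M)$ inherits the intertwining $Aq(M)^*=q(M)A$. That identity does two jobs. First, it turns the real-polynomial square root $R$ of $N=(I-2E)M$ into the *congruence $RAR^*=NA=(I-2E)B_S$. Second, it makes the real spectral projection $E$ split $B_S$ block-diagonally, through $EB_S(I-E)^*=0=(I-E)B_SE^*$.

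I checked each step, including the edge cases $k=0$ and $k=n$, and they go through. Two points are implicit and worth stating:
\begin{itemize}
\item The interpolating polynomials for $E$ and for $\sqrt{N}$ can be taken with real coefficients because the spectra of $M$ and of $N$ are closed under conjugation. For $N$, this holds because it is a real polynomial in $M$, so $M\sim\overline M$ gives $N\sim\overline N$.
\item Interpolating to order $n-1$ at every eigenvalue avoids having to match Jordan indices at $\lambda$ and $\bar\lambda$, and it immediately gives $p(N)^2=N$.
\end{itemize}
The paper's citation keeps its exposition short, whereas your route proves the lemma it relies on using only Hermite interpolation and primary matrix functions.
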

Given how $M$ is defined, one can think of $M$ as a measure of how far $B$ fails to be *congruent to $A$. In fact if $M=I_n$ then $B \sim A$.
A sketch of the proof for (1) is given. For a complete proof, see \cite{HS1}.
\begin{proof}(Sketch of)
    It is straightforward to prove that $C(A)=C(B_S)$, so $$(B_S^*)^{-1}B_S=(A^*)^{-1}A \Leftrightarrow B_SA^{-1}=B_S^*(A^*)^{-1}=(A^{-1}B_S)^*$$ so $M^*=A^{-1}B_S=A^{-1}MA$ and ($1$) is proved (a property of real matrices is that if a matrix is similar to its conjugate transpose, then it is similar to a real matrix). %For a proof of $\textit{2}$, see \cite{HS1} Lemma 5.
\end{proof}

This result, and its proof, give us an algorithm in order to look for the canonical *congruent form for a maximal rank matrix. Given $A \in GL(n, \mathbb{C})$, the idea is to build another matrix $B$ such that $C(A) \sim_{sim} C(B)$, and then apply Theorem \ref{lemma5}.  We need a technical lemma concerning matrices $\Delta_n$ and $H_{2n}(\mu)$ before presenting the algorithm:

%Given $\mu \in \mathbb{C}$ with $|\mu|=1$, consider the blocks $H_{2n}$ and $\Delta_n$ then the following result is verified
\begin{lemma}\label{simil-cosquare}
Let $\mu \in \mathbb{C}$ with $|\mu|>1$.  Then:
\begin{itemize}
\item[(i)] $C(H_{2n}(\mu)) \sim_{sim} J_n(\mu)\oplus J_n(\bar \mu^{-1})$;
\item[(ii)] $H_{2m}(\mu) \sim_* -H_{2m}(\mu)$;
\item[(iii)] $C(\Delta_n)\sim_{sim}J_n(1)$;
\item[(iv)] $C(e^{i\nicefrac{\theta}{2}}\Delta_n) \sim_{sim} J_n(e^{i\theta})$.
\end{itemize}
\end{lemma}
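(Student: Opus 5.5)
Each of the four items reduces to an explicit computation of a *cosquare, followed by identification of its Jordan form. Throughout, $C(A)=(A^*)^{-1}A$ from Definition~\ref{*cosquare} is used.

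For (i), write $H=H_{2n}(\mu)=\begin{bmatrix}0&I_n\\ J_n(\mu)&0\end{bmatrix}$, so that $H^*=\begin{bmatrix}0&J_n(\mu)^*\\ I_n&0\end{bmatrix}$. Its inverse is $(H^*)^{-1}=\begin{bmatrix}0&I_n\\ (J_n(\mu)^*)^{-1}&0\end{bmatrix}$, which can be checked by block multiplication. Multiplying gives
\[
C(H)=\begin{bmatrix}J_n(\mu)&0\\ 0&(J_n(\mu)^*)^{-1}\end{bmatrix}.
\]
The matrix $J_n(\mu)^*=J_n(\bar\mu)^T$ is similar to $J_n(\bar\mu)$. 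Its inverse is then similar to the inverse of a single Jordan block with nonzero eigenvalue $\bar\mu$. That inverse is upper triangular with constant diagonal $\bar\mu^{-1}$ and nonzero superdiagonal, so it is similar to $J_n(\bar\mu^{-1})$. This yields (i).

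For (ii), take $P=\begin{bmatrix}I_n&0\\0&-I_n\end{bmatrix}$, which satisfies $P^*=P$. Then $PHP^*$ negates the two off-diagonal blocks, so $PHP^*=-H$. This gives (ii) directly.

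For (iii) and (iv), let $\Delta=\Delta_n$. This matrix is symmetric, so $\Delta^*=\bar\Delta$ and $C(\Delta)=\bar\Delta^{-1}\Delta$. In (iv), $\Delta$ is scaled by $e^{i\theta/2}$, so $(e^{i\theta/2}\Delta)^*=e^{-i\theta/2}\bar\Delta$, and hence $C(e^{i\theta/2}\Delta)=e^{i\theta}C(\Delta)$. Once (iii) is known, $e^{i\theta}J_n(1)$ has the single eigenvalue $e^{i\theta}$ and rank-$(n-1)$ nilpotent part, so it is similar to $J_n(e^{i\theta})$. Thus (iv) follows from (iii).

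The real content is therefore (iii). Here I would show two things: that $C(\Delta)$ has the single eigenvalue $1$, and that $C(\Delta)-I$ has rank $n-1$, i.e. $C(\Delta)$ is a single Jordan block. First I would write $\Delta=R+iR'$, where $R$ is the backward identity (ones on the antidiagonal) and $R'$ carries the $i$'s on the next antidiagonal below it. Then $\bar\Delta=R-iR'$. Multiplying by $R$ converts these into triangular matrices: $R\Delta=I+iN$ and $R\bar\Delta=I-iN$, with $N$ the nilpotent shift. This gives
\[
C(\Delta)=(I-iN)^{-1}(I+iN),
\]
a Cayley-type transform of $N$. Since $N$ is nilpotent, this matrix is unipotent. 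Expanding, $C(\Delta)=I+2iN+O(N^2)$. Because $N^{n-1}\neq0$, the nilpotent part $C(\Delta)-I=2iN(I+\dots)$ has nilpotency index exactly $n$. Hence $C(\Delta)$ is similar to $J_n(1)$.

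The main obstacle will be this bookkeeping in (iii). I need to get the antidiagonal placement of the $i$'s right, so that $R\Delta$ is indeed $I+iN$ (or its transpose). Either version works, since the final argument only uses that the nilpotent part has maximal index.
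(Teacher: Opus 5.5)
Your proposal is correct and follows the paper's proof item by item. You use the same block computation $C(H_{2n}(\mu))=J_n(\mu)\oplus (J_n(\mu)^*)^{-1}$ in (i), the same conjugation by $I_m\oplus(-I_m)$ in (ii), and the same reduction $C(e^{i\theta/2}\Delta_n)=e^{i\theta}C(\Delta_n)$ in (iv). In (iii), your factorization $R\Delta_n=I+iN$ gives $C(\Delta_n)=(I-iN)^{-1}(I+iN)=I+2iN(I-iN)^{-1}$; this is a cleaner derivation of the unipotent upper-triangular matrix with superdiagonal $2i$ that the paper writes down before applying the same spectrum-plus-rank argument.
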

\begin{proof}
An easy calculation gives us $$C(H_{2n}(\mu))=(H_{2n}(\mu)^*)^{-1}H_{2n}(\mu)=\begin{bmatrix}
    0 & I_n\\
    (J_n(\mu)^*)^{-1} & 0
\end{bmatrix} \begin{bmatrix}
    0 & I_n\\
    J_n(\mu) & 0
\end{bmatrix}=$$ $$=J_n(\mu)\oplus (J_n(\mu)^*)^{-1}.$$ 
Next step is to show that $(J_n(\mu)^*)^{-1} \sim_{sim}J_n(\bar \mu^{-1})$. For this purpose we calculate $(J_n(\mu)^*)^{-1}$:
$$(J_n(\mu)^*)^{-1}=\begin{bmatrix}
\bar \mu &  &  &   & 0 \\
1 & \bar \mu &  &  &   \\
 & 1  & \ddots & \ddots &   \\
 &   & \ddots  & \bar \mu &  \\
0 &   &   & 1  & \bar \mu
\end{bmatrix}^{-1}=\begin{bmatrix}
\bar \mu^{-1} &  &  &   & 0 \\
 & \bar \mu^{-1} &  &  &   \\
 &   & \ddots &  &   \\
 &   &   & \bar \mu^{-1} &  \\
* &   &   &   & \bar \mu^{-1}
\end{bmatrix}.$$
We observe that $J_n(\bar \mu^{-1})$ is a lower triangular matrix, so it has a sole eigenvalue, $\bar \mu^{-1}$ of multiplicity $n$. In addition, $rk((J_n(\mu)^*)^{-1}-\bar \mu^{-1}I_n)=n-1$, then $(J_n(\mu)^*)^{-1} \sim_{sim}J_n(\bar \mu^{-1})$, 
and (i) is proved.  

\medskip

Case (ii) is a direct computation.  Just observe: $$\begin{bmatrix}
        I_m & 0\\
        0 & -I_m
    \end{bmatrix}\begin{bmatrix}
        0 & I_m\\
        J_m(\mu) & 0
    \end{bmatrix}\begin{bmatrix}
        I_m & 0\\
        0 & -I_m
    \end{bmatrix}=-\begin{bmatrix}
        0 & I_m\\
        J_m(\mu) & 0
    \end{bmatrix}.$$

For (iii) we calculate first $C(\Delta_n)$:
$$C(\Delta_n)=\begin{bmatrix}
0 &        &       -i & 1\\
  &\iddots  &   \iddots     &  \\
-i  & 1       &  &  \\
1 &  &  & 0
\end{bmatrix}^{-1}\begin{bmatrix}
0 &        &        & 1\\
  &  &   \iddots     & i \\
  & 1       & \iddots &  \\
1 & i &  & 0
\end{bmatrix}=\begin{bmatrix}
1 & 2i &  &   & * \\
 & 1 & 2i &  &   \\
 &   & \ddots & \ddots &   \\
 &   &   & 1 & 2i \\
0 &   &   &   & 1
\end{bmatrix}.$$
Let us study the similarity of $C(\Delta_n)$. On the one hand, as it is an upper triangular matrix, then $p_z(C(\Delta_n))=(1-z)^n$, so $Sp(C(\Delta_n))=\{1\}$ with multiplicity $n$. On the other hand, $rk(C(\Delta_n)-I_n)=n-1$, so the geometric multiplicity of $z=1$ is $1$, then $C(\Delta_n) \sim_{sim} J_n(1)$.  

Finally, for case (iv), a short calculation shows that $$C(e^{i\nicefrac{\theta}{2}}\Delta_n)=e^{i\theta}C(\Delta_n).$$ Using (iii), $e^{i\theta}C(\Delta_n)\sim_{sim} e^{i\theta}J_n(1)$. It is clear that $e^{i\theta}J_n(1) \sim_{sim} J_n(e^{i\theta})$, indeed, $e^{i\theta}J_n(1)$ has $e^{i\theta}$ as unique eigenvalue of multiplicity $n$. In addition $e^{i\theta}(J_n(1)-Id)$ has rank $n-1$. This concludes the result. 
\end{proof}

Now, we present the algorithm for regular matrices:

    \begin{enumerate}
    \item[(a)] Given $A \in GL(n, \mathbb{C})$, diagonalize by similarity its *cosquare $C(A)$, i.e., determine its Jordan canonical form $J(C(A))$.  For that, suppose that $\{e^{i\phi_j}\}_{j=1}^p$ are the eigenvalues of modulus~1, and $\{\mu_i\}_{i=1}^q$ are the eigenvalues of modulus greater than~1. Note that, by Lemma \ref{eigenvalue}, $\bar \mu_i^{-1}$ is also an eigenvalue with the same multiplicity as $\mu_i$.  Now, we have that: 
    $$C(A) \sim_{sim}  \underbrace{\bigoplus_{i=1}^q (J_{m_i}(\mu_i) \oplus J_{m_i}(\bar \mu_i^{-1}))\oplus \bigoplus_{j=1}^pJ_{n_j}(e^{i\phi_j})}_{J(C(A))}.$$   
    %\item Suppose that $$C(A) \sim_{sim}  \bigoplus_{i=1}^q (J_{m_i}(\mu_i) \oplus J_{m_i}(\bar \mu_i^{-1}))\oplus \bigoplus_{j=1}^pJ_{n_j}(e^{i\phi_j})$$ where $|\mu_i|>1$, $0\leq \phi_j < 2\pi$. Here $m_i$ is the algebraic multiplicity of the eigenvalue $\mu_i$ and $n_j$ are the algebraic multiplicities of the eigenvalues of modulus 1. Note that, by lemma \ref{eigenvalue}, $\bar \mu_i^{-1}$ is also eigenvalue with the same multiplicity as $\mu_i$. Then we define $$B:= \bigoplus_{i=1}^q H_{2m_i}(\mu_i)\oplus \bigoplus_{j=1}^pe^{i\frac{\phi_j}{2}}\Delta_{n_j}$$ 
    where $J_{m_i}(\mu_i)$ refers to the Jordan blocks associated to the eigenvalue $\mu_i$ and the same for $J_{n_j}(e^{i\phi_j})$ (see Remark~\ref{rmk:Jordan} for details).
    \medskip
    \item[(b)] Define $$B:= \bigoplus_{i=1}^q H_{2m_i}(\mu_i)\oplus \bigoplus_{j=1}^pe^{i\frac{\phi_j}{2}}\Delta_{n_j}.$$  By Lemma~\ref{simil-cosquare}, we have that: $$C(B)\sim_{sim}\bigoplus_{i=1}^q (J_{m_i}(\mu_i) \oplus J_{m_i}(\bar \mu_i^{-1}))\oplus \bigoplus_{j=1}^pJ_{n_j}(e^{i\phi_j}) \sim_{sim} C(A),$$ and therefore $A$ and $B$ have similar *cosquares.
    \medskip
    \item[(c)] %Is straightforward to prove that $C(B)\sim_{sim}\bigoplus_{i=1}^q (J_{m_i}(\mu_i) \oplus J_{m_i}(\bar \mu_i^{-1}))\oplus \bigoplus_{j=1}^pJ_{n_j}(e^{i\phi_j}) \sim_{sim} C(A)$, so $A$ and $B$ have similar *cosquares, that is, there exist $S \in GL(n, \mathbb{C})$ such that $C(A)=S^{-1}C(B)S$. Define $B_S=S^{*}BS$ and $M=B_SA^{-1}$, 
    By Theorem~\ref{lemma5} there are square complex matrices $D_{-}$ and $D_{+}$ such that $A \sim (-D_-)\oplus D_+$ and $B \sim D_- \oplus D_+$. For our purposes, we may assume that $A$ will be *congruent to $$A\sim_*\bigoplus_{i=1}^q \delta_iH_{2m_i}(\mu_i)\oplus \bigoplus_{j=1}^p\varepsilon_je^{i\frac{\phi_j}{2}}\Delta_{n_j}, \quad \delta_i, \, \varepsilon_j  \in \{-1,1\},$$
    for some choice of signs $\delta_i, \, \varepsilon_j$. Moreover, by Lemma~\ref{simil-cosquare} (iii), we can assume that $\delta_i=1$ for $i=1,\dots,q$. Then $A$ is *congruent to $$A\sim_*\bigoplus_{i=1}^q H_{2m_i}(\mu_i)\oplus \bigoplus_{j=1}^p\varepsilon_je^{i\frac{\phi_j}{2}}\Delta_{n_j},$$ where    $\varepsilon_i \in \{-1, 1\}, \, |\mu_i|>1, \, 0 \leq \phi_j \leq 2\pi.$
    \end{enumerate}
    %\item (PONERLO COMO LEMA A PARTE) Note that $H_{2m}(\mu) \sim -H_{2m}$, indeed 
    
    %So, we can assume that $\delta_i=1$ for $i=1,\dots,q$. Then $A$ is *congruent to $$\bigoplus_{i=1}^q H_{2m_i}(\mu_i)\oplus \bigoplus_{j=1}^p\varepsilon_je^{i\frac{\phi_j}{2}}\Delta_{n_j}, \quad \varepsilon_i \in \{-1, 1\}, \, |\mu_i|>1, \, 0 \leq \phi_j \leq 2\pi.$$

\medskip

\begin{remark}\label{rmk:Jordan}
It is worth noticing that the geometric multiplicity of an eigenvalue $\mu_i$ ($e^{i\phi_j}$), determines precisely the total number of Jordan blocks associated with it in the Jordan canonical form of $C(A)$. While the algebraic multiplicity determines the sum of the sizes of these blocks, the geometric multiplicity counts the individual blocks themselves, regardless of their dimensions. This means that for a fixed eigenvalue $\mu_i$ or $e^{i\phi_j}$ there may exist more than one Jordan block.
\end{remark}

\begin{remark}
In order to choose the correct value of $\varepsilon_j$, we need to construct the matrix $M$ given in Theorem~\ref{lemma5} and count the number of negative eigenvalues.
However, there are some particular cases of *congruence: if $M$ has no negative eigenvalues then $D_-$ does not appear and $A$ is *congruent to $B$, whereas if all their eigenvalues are negative, then $D_+$ is zero and $A$ is *congruent to $-B$. 
\end{remark}

\begin{remark}\label{hermHS}
    The previous algorithm recovers Sylvester's inertia law (Theorem \ref{sil}) when $M$ is a regular Hermitian matrix.  Since $C(M)=I_n=J_1(1) \oplus \stackrel{n}{\dots} \oplus J_1(1)$, then $M \sim _* \varepsilon_1 \Delta_1 \oplus \stackrel{n}{\dots} \oplus \varepsilon_n \Delta_n$ where $\varepsilon_i \in \{-1, 1\}$.
%\end{remark}

%\begin{remark}\label{rmk:max-eigen-value}
Moreover, suppose that $C(A)$ has a unique eigenvalue $\mu_0$ of maximum multiplicity $n$ and that the dimension of its corresponding eigenspace is also $n$.  Then, $C(A)\sim_{sim} \mu_0 I_n = \bigoplus^n J_1(\mu_0)$. In particular, $\exists\, P\in GL(n,\mathbb C)$ such that $PC(A)P^{-1} = \mu_0 I_n$.  This implies that $C(A) = \mu_0 I_n$ and since $C(A) = (A^*)^{-1}A$, then $A = \mu_0 A^*$.  On the other hand, since $|\mu_0| = 1$ (see Remark~\ref{remark-eigenvalues}), we can express $\mu_0 = e^{i\theta}$.  Now, $\widetilde A:= e^{-i\theta/2} A$ is a Hermitian matrix and in particular, the original $A$ is a multiple of a Hermitian matrix, so the hypotheses of Corollary~\ref{rmk:multiple-Hermitian} hold.
\end{remark}

This remark has a deep implication in terms of classifying 1-abelian complex structures:

\begin{proposition}\label{prop:C(M)-Hermitian}
Let $(\mathfrak g, J_M)$ be an NLA of dimension $2n$ endowed with a 1-abelian complex structure where the matrix $M$ is regular and $C(M)$ is diagonalizable with a unique eigenvalue. Then, there exists a basis $\{\sigma^i\}_{i=1}^n$ of $\mathfrak{g}^{1,0}$ such that $J_M$ is equivalent to $J_{n,k,0}$ (see~\eqref{eq:Hermitian}) for some $k$.
\end{proposition}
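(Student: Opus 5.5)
The plan is to reduce to the Hermitian case through the observation in Remark~\ref{hermHS}, and then apply Corollary~\ref{rmk:multiple-Hermitian} together with Theorem~\ref{teo:Hermitian}. Here $M\in GL(n-1,\mathbb C)$ is regular, so its *cosquare $C(M)=(M^*)^{-1}M$ is well defined.

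First, by hypothesis $C(M)$ is diagonalizable with a single eigenvalue $\mu_0$. Hence $C(M)=P^{-1}(\mu_0 I_{n-1})P=\mu_0 I_{n-1}$, since a scalar matrix is fixed under conjugation. This gives $M=\mu_0 M^*$. By Lemma~\ref{eigenvalue}, $1/\bar\mu_0$ is also an eigenvalue. Uniqueness of the eigenvalue forces $\mu_0=1/\bar\mu_0$, so $|\mu_0|=1$ and we may write $\mu_0=e^{i\theta}$. Alternatively, take conjugate transposes in $M=\mu_0M^*$ to get $M^*=\bar\mu_0 M=|\mu_0|^2M^*$; since $M\neq 0$, this again gives $|\mu_0|=1$.

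Next, set $N:=e^{-i\theta/2}M$ and check that it is Hermitian:
\[
N^*=e^{i\theta/2}M^*=e^{i\theta/2}e^{-i\theta}M=e^{-i\theta/2}M=N.
\]
So $M=e^{i\theta/2}N$ is a nonzero multiple of a Hermitian matrix. Moreover $N$ is regular, so its tuple $(p,q,r)$ from Sylvester's law (Theorem~\ref{sil}) has $r=\dim\ker N=0$.

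Finally, Corollary~\ref{rmk:multiple-Hermitian} gives $J_M\sim J_N$. Concretely, rescaling the last basis element by $c=e^{-i\theta/2}$ as in Lemma~\ref{equivalence} does this. Theorem~\ref{teo:Hermitian}, with the normalization $p\ge q$ obtained by swapping via the lemma on $(p,q,r)\sim(q,p,r)$, then yields a basis $\{\sigma^i\}_{i=1}^n$ in which the equations are $J_{n,k,0}$ with $k=p$.

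The only point needing care is the step $C(M)=\mu_0 I$. Here I rely on diagonalizability: a unique eigenvalue alone would allow nontrivial Jordan blocks. After that step the argument is a direct chain of already established results.
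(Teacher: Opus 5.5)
Your proof is correct and follows the paper's own argument, which is contained in Remark~\ref{hermHS}. Diagonalizability with a single eigenvalue forces $C(M)=\mu_0 I_{n-1}$, hence $M=\mu_0M^*$ with $|\mu_0|=1$, so $e^{-i\theta/2}M$ is a regular Hermitian matrix, and Corollary~\ref{rmk:multiple-Hermitian} together with Theorem~\ref{teo:Hermitian} (with $r=0$) gives $J_{n,k,0}$; your explicit verifications of $N^*=N$ and of $|\mu_0|=1$ simply fill in details the paper leaves implicit.
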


\subsection{Case of singular complex matrices}
If $A$ is a singular matrix, the results in \cite{HS1} and \cite{HS2} allow us to  separate the singular part (related to blocks of Type 0 in Theorem~\ref{HS}) from the regular part and to apply the last algorithm to the regular part.  The first result is the following:
\begin{theorem}\label{singular}\cite[Lemma 2]{HS1}:
    Each square matrix $A$ is *congruent to a direct sum of the form 
    \begin{equation}\label{eq:descomposicion-singular}
    B \oplus J_{r_1}(0)\oplus ... \oplus J_{r_p}(0) \text{ with a nonsingular B},
    \end{equation}
    for some $1 \leq r_1 \leq ... \leq r_p$. This direct sum is uniquely determined up to permutation of its singular direct summands and replacement of $B$ by any matrix that is *congruent to it.
\end{theorem}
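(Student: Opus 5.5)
We prove the existence part by a Fitting-type splitting of $\mathbb C^n$, and the uniqueness part by extracting *congruence invariants of $A$. View $A$ as the sesquilinear form $f(x,y)=y^*Ax$ on $V=\mathbb C^n$; a change of basis by $P$ replaces $A$ by $P^*AP$, so *congruence classes of $A$ are isometry classes of forms. Uniqueness of $B$ only needs to hold up to *congruence, and Theorem~\ref{HS} already gives the full canonical decomposition. One route is therefore to take the block decomposition of Theorem~\ref{HS}, group the Type I and Type II blocks into $B$ (which is nonsingular, since $\Delta_n$ and $H_{2n}(\mu)$ with $\mu\neq0$ are invertible), and keep the Type 0 blocks $J_{r}(0)$ as the singular summands. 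This yields \eqref{eq:descomposicion-singular} at once.

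Uniqueness would also follow from Theorem~\ref{HS}. If $B\oplus\bigoplus J_{r_i}(0)\sim_* B'\oplus\bigoplus J_{s_j}(0)$, decompose $B$ and $B'$ into canonical blocks; no Type 0 block occurs there because $B,B'$ are regular. Uniqueness up to permutation in Theorem~\ref{HS} then forces the multisets $\{r_i\}$ and $\{s_j\}$ to agree and the canonical block lists of $B$ and $B'$ to agree, so $B\sim_*B'$.

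Since the paper cites this as an independent lemma of \cite{HS1} that is used to prove Theorem~\ref{HS}, a self-contained argument would be preferable to avoid circularity. For existence I would induct on $n$. If $A$ is nonsingular we are done. Otherwise, consider the left and right kernels $\ker A$ and $\ker A^*$. If some vector $v$ has $f(v,\cdot)$ and $f(\cdot,v)$ both identically zero, split off a $J_1(0)=[0]$ summand. In general, build chains $x_1,\dots,x_r$ with $Ax_1=0$ and $Ax_{k+1}$ related to $A^*x_k$, mimicking a Kronecker/Jordan chain for the pencil $A-\lambda A^*$. Then show that the span of a maximal chain, together with a suitable complement, splits $f$ orthogonally with the restriction equal to $J_r(0)$.

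For uniqueness, I would show the sizes $r_i$ are determined by the ranks of the products $A,\ A(A^*)^{\dagger}A,\dots$, or more cleanly by the dimensions of the iterated subspaces $V_0=V$, $V_{k+1}=\{x: Ax\in A^*V_k\}$. These dimensions are *congruence invariant and can be computed blockwise: for a nonsingular block they are constant, while for $J_r(0)$ they drop at a step depending on $r$. Cancellation of the nilpotent part then leaves $B$ determined up to *congruence, by a Witt-type cancellation argument. The main obstacle is this last step, along with constructing the orthogonal complement in the existence induction; for these I would defer to \cite[Lemma 2]{HS1}, where the pencil-chain construction is carried out.
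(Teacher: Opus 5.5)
Your first route is a correct deduction from Theorem~\ref{HS} as the paper states it. Type~I and Type~II blocks are invertible ($\det\Delta_n=\pm1$, $\det H_{2n}(\mu)=\pm\mu^n\neq0$). A nonsingular $B$ has no Type~0 block in its canonical decomposition, because *congruence preserves rank. The uniqueness clause of Theorem~\ref{HS}, applied to the two canonical decompositions $\mathrm{can}(B)\oplus\bigoplus_i J_{r_i}(0)$ and $\mathrm{can}(B')\oplus\bigoplus_j J_{s_j}(0)$ of the same matrix, gives $\{r_i\}=\{s_j\}$ and $B\sim_* B'$.

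The paper does not argue this way. It gives no proof, citing \cite[Lemma 2]{HS1}, and instead explains the statement through the regularization algorithm of Section~\ref{sec:reg-alg}. There, Gaussian elimination by *congruence strips off the singular part, producing $m_1=n-\mathrm{rank}\,M\geq m_2\geq\cdots$ and a nonsingular $M_{(\tau)}$. Theorem~\ref{important} (from \cite{HS2}) then converts the staircase matrix $Q$ into $\bigoplus_k J_k^{[m_k-m_{k+1}]}(0)$, and supplies the invariance of the $m_i$ and of the class of $M_{(\tau)}$.

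Your route buys brevity. The paper's buys an explicit algorithm and computable invariants, which are exactly what Section~\ref{sec:4} uses (maximal type $m_1=m_2$, Lemma~\ref{lemma-rango-maximal}, the counts $s(n,r)$). As you say yourself, yours is not an independent proof: in \cite{HS1}, Theorem~1 is obtained by reducing to the nonsingular case through precisely this lemma.

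Your self-contained fallback does not remove that dependence. Both of its substantive steps (splitting off a chain orthogonally, and cancelling the nilpotent part) are deferred to \cite[Lemma 2]{HS1}, i.e.\ to the statement itself.

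The uniqueness invariant you propose is also too weak. For $A=J_r(0)$ one gets $V_k=\{x: x_{2j}=0 \text{ for } 2j\leq\min(2k,r)\}$, so $\dim V_k$ drops by one at each of the first $\lfloor r/2\rfloor$ steps. Hence the sequence $\dim V_k$ only records $\#\{i: r_i\geq 2j\}$ for $j\geq1$, and cannot tell $J_{2k}(0)$ from $J_{2k+1}(0)$. For example, $J_3(0)$ and $J_2(0)\oplus[1]$ have the same size, rank, nullity and sequence $\dim V_k=3,2,2,\dots$, yet they are not *congruent.

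The missing ingredient is the rank of $f$ restricted to $V_k$. On $V_1$ it is $0$ for $J_3(0)$ and $1$ for $J_2(0)\oplus[1]$. Together with $\dim V_k$ and the nullity, these ranks recover every $\#\{i: r_i\geq m\}$. Moreover, $f|_{V_\infty}$ is isometric to $B\oplus 0$. So $B$ is recovered up to *congruence as the form induced on $V_\infty$ modulo its radical, which makes the Witt-type cancellation unnecessary.
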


 \begin{remark}
    Note that $B$ might not exist. This is the case when $dim\, A=r_1+\dots+r_p$.
\end{remark}

In the direct sum~\eqref{eq:descomposicion-singular}, $B$ is called the \textit{regular part }of $A$ (in case of existence), and all the blocks $J_{r_k}(0)$ constitute the singular part.  Note that in general, given $r_i, r_j$, it is not true that $J_{r_i+r_j}(0)=J_{r_i} \oplus J_{r_j}(0)$.

\medskip

The process that appears in the previous theorem is called in the literature \cite{HS2}  \textit{regularization}.  The goal of this section is to describe the regularization algorithm for a singular matrix $M \in M_n(\mathbb{C})$, obtaining its regular and singular part.

\subsubsection{The regularization algorithm}\label{sec:reg-alg}
Let $M\in M_n(\mathbb C)$ be a singular matrix with rank$(M) = r<n$. The main idea of the regularization algorithm is to reduce $M$ by *congruence transformations similarly to Gauss method in order to construct a smaller matrix $M_{(1)}$ and integers $m_1$ and $m_2$ that reflect the singular part of the matrix.  The algorithm consists of 4 steps that can be iterated if needed.   

\medskip

\textbf{Step 1}: Making Gaussian elimination of rows in matrix $M$, we obtain a regular matrix $S$ (which is in fact a product of elementary matrices), such that the last $m_1:=n-r$ rows of $SM$ are zero. Then, we multiply by $S^*$ to obtain
\begin{equation}\label{step1-regularization}
    M^{(i)} = SMS^*=\begin{bmatrix}
        M'S^*\\
        0
    \end{bmatrix}=\left(\begin{array}{@{}c|c@{}}
  M'
  & N \\
\hline
  0 &
  0_{m_1}
\end{array}\right),
\end{equation}
where $0_{m_1}$ is the zero square matrix of order $m_1$.   For our purposes, we decompose the linearly independent part of $M^{(i)}$ into two blocks: a square matrix $M' \in M_{n-m_1}(\mathbb{C})$ and $N \in M_{(n-m_1)\times m_1}(\mathbb{C})$. %From this, is clear that $m_1=dim(Ker(M))$ and $rk(M)=n-m_1$. The number $m_1$ is called \textit{nullity of $M$}. 
If $N$ is the zero matrix, then 
\begin{equation}\label{N=0}
M^{(i)}=\left(\begin{array}{@{}c|c@{}}
  M'
  & 0 \\
\hline
  0 &
  0_{m_1}
\end{array}\right)
\end{equation} and therefore $M$ is *congruent to $M' \oplus J_1(0)\oplus\stackrel{m_1}{\ldots}\oplus J_1(0)$. Moreover, $M'$ is a regular matrix, otherwise the Gauss algorithm would provide new zero rows, contradicting the definition of $m_1$.  The regularization algorithm finishes here.  If $N\neq 0$, we go to Step 2.

%\begin{remark}\label{M-no-maximal}Obseve that in this case, $M^{(i)}$ is not of maximal type and therefore we will not obtain ``new'' complex structures.
%\end{remark}

\medskip

\textbf{Step 2}: The goal in this step is to simplify the matrix~$N$ as much as possible whenever $N$ is not the null matrix.  If $m_2$ denotes the rank of $N$, applying again the Gaussian elimination process to the matrix $N$, there exists a regular square matrix $R$ of order $n-m_1$, product of elementary matrices, such that the top $n-m_1-m_2$ rows of $RN$ are zero (or do not exist if $m_1+m_2 = n$) and the bottom $m_2$ rows are linearly independent:%\footnote{In the particular case $n-m_1 = 1$, $N$ is a row-matrix and $N=E$ in \eqref{step1-regularization}.}: 
\begin{equation*}\label{0-E}
    RN=\begin{bmatrix}
        0\\
        E
    \end{bmatrix}.
\end{equation*}
Here $E\in M_{m_2\times m_1}(\mathbb C)$ and rank$(E) = m_2$. Observe that $m_2 \leq m_1$.\\

Summarizing both steps 1 and 2, the following diagram follows:
\begin{eqnarray}\label{step2}
    M &\longmapsto& M^{(i)} := SMS^*=\left(\begin{array}{@{}c|c@{}}
  M'
  & N \\
\hline
  0 &
  0_{m_1}
\end{array}\right) \longmapsto (R\oplus I)SMS^*(R\oplus I)^*= \nonumber\\
   & = &(R\oplus I)\left(\begin{array}{@{}c|c@{}}
  M'
  & N \\
\hline
  0 &
  0_{m_1}
\end{array}\right)(R\oplus I)^*=\left(\begin{array}{@{}c|c@{}}
  RM'R^*
  & RN \\
\hline
  0 &
  0_{m_1}
\end{array}\right) =  \left(\begin{array}{@{}c|c@{}}
  \begin{matrix}
      M_{(1)} & B\\
      C & D
  \end{matrix}
  & \begin{matrix}
      0\\
      E
  \end{matrix} \\
\hline
  0 &
  0_{m_1}
\end{array}\right):= M^{(ii)}.
\end{eqnarray}
In the last equality, we have decomposed the block $RM'R^* \in M_{n-m_1}(\mathbb{C})$ in such a way that $D$ is a square matrix, $D\in M_{m_2}(\mathbb C)$.  So the other submatrices are of the following orders: $C \in M_{m_2 \times (n-m_2-m_1)}(\mathbb{C})$, $B \in M_{(n-m_2-m_1) \times m_2}(\mathbb{C})$ and $M_{(1)} \in M_{n-m_2-m_1}(\mathbb{C})$ (if $m_1+m_2 =n$, $M_{(1)}$ and $B$ do not exist). \\ %\begin{itemize}
    %\item %If $M_{(1)}$ is non-singular, then the algorithm ends.%
    %\item %If $M_{(1)}$ is singular. We repeat the process to the matrix $M_{(1)}$, so two more numbers are obtained: One the one hand $m_3=dim(Ker(M_{(1)}))$ and, on the other hand, $m_4$. In adittion, it also appears a new square matrix $M_{(2)}$ of order $n-m_1-m_2-m_3-m_4$. Iterating this algorithm untill we find a non-singular matrix $M_{(\tau)}$ at  step $\tau$, we will be obtaining a sequence of integer numbers $m_1 \geq m_2 \geq ... \geq m_{2\tau} \geq 0$. The most important result is that $\{m_i\}_{i=1}^{2\tau}$ are *congruence invariants and determine completely the singular part of our matrix $M$. This aspect is explained in the next section.%
%\end{itemize}

Our next objective is to simplify submatrices $E, C$ and $D$, obtaining new matrices $M^{(j)}$ which are always *congruent with the initial $M$.
%If $m_2 > 0$, then the matrix $M^{(ii)}$ obtained in step 2 (see \eqref{step2}), can be reduced to a more sparse form by *congruence by making the blocks $C$ and $D$ to be zero, as it is shown in the following two step method:

\medskip

\textbf{Step 3}: Let us focus first on the matrix $E$. Recall that $E\in M_{m_2\times m_1}(\mathbb C)$ and rank$(E) = m_2$.   We are going to construct a regular matrix $V\in GL(m_1,\mathbb C)$ such that $EV=\begin{bmatrix}
    I_{m_2} & 0
\end{bmatrix}$:

Since the rank of the block $RN$ is maximum at rows, its columns span $\mathbb{C}^{m_2}$, in particular we can assume that the first $m_2$ columns are a basis of $\mathbb{C}^{m_2}$ and the other $m_1-m_2$ columns are linear combinations of the first $m_2$ columns.  If not, there exists $V_{(1)}\in GL(m_1,\mathbb C)$, that corresponds to certain columns permutations, such that $$EV_{(1)}=      \begin{bmatrix}
          E_{m_2} & 0
      \end{bmatrix},$$
where $E_{m_2}$ is the square block made by the first $m_2$ rows and the first $m_2$ columns. Now, as the first $m_2$ columns span $\mathbb{C}^{m_2}$, $E_{m_2}$ is invertible, there exists then $E_{m_2}^{-1}$ and $$\begin{bmatrix}
          E_{m_2} & 0
      \end{bmatrix}\begin{pmatrix}
          E_{m_2}^{-1} & \\
           & 0_{m_1-m_2}
      \end{pmatrix}=\begin{bmatrix}
          I_{m_2} & 0
      \end{bmatrix}$$
So, we can define $V=V_{(1)}\begin{pmatrix}
          E_{m_2}^{-1} & \\
           & 0_{m_1-m_2}
      \end{pmatrix}$. Finally, consider then the matrix $Q:=I_{n-m_1} \oplus V^*$, we have

\begin{equation}\label{util-max-tipe}
    QM^{(ii)}Q^*=\left(\begin{array}{@{}c|c@{}}
  I_{n-m_1}
  &  \\
\hline
   &
  V^*
\end{array}\right)\left(\begin{array}{@{}c|c@{}}
  \begin{matrix}
      M_{(1)} & B\\
      C & D
  \end{matrix}
  & \begin{matrix}
      0\\
      E
  \end{matrix} \\
\hline
  0 &
  0_{m_1}
\end{array}\right)\left(\begin{array}{@{}c|c@{}}
  I_{m-m_1}
  &  \\
\hline
   &
  V
\end{array}\right)=
\end{equation}
$$=\left(\begin{array}{@{}c|c@{}}
  \begin{matrix}
      M_{(1)} & B\\
      C & D
  \end{matrix}
  & \begin{matrix}
      0\\
      E
  \end{matrix} \\
\hline
  0 &
  0_{m_1}
\end{array}\right)\left(\begin{array}{@{}c|c@{}}
  I_{m-m_1}
  &  \\
\hline
   &
  V
\end{array}\right)=\left(\begin{array}{@{}c|c@{}}
  \begin{matrix}
      M_{(1)} & B\\
      C & D
  \end{matrix}
  & \begin{matrix}
      0\\
      \begin{bmatrix}
          I_{m_2} & 0
      \end{bmatrix}
  \end{matrix} \\
\hline
  0 &
  0_{m_1}
\end{array}\right):=M^{(iii)}.$$

Observe that if $m_2<m_1$, then the last $m_1-m_2$ rows and columns are zero.  Otherwise, if $m_2=m_1$, then there are no zero columns.  This observation will be important for us in the future.

\medskip

\textbf{Step 4}: Once we have normalized matrix $E$, next goal is to make $C$ and $D$ zero, starting from $M^{(iii)}$.  This is quite easy, since the block $I_{m_2}$ spans $\mathbb{C}^{m_2}$, so we can do column transformations in order to make the blocks $C$ and $D$ vanish. This operation is given by right multiplication by an elementary matrix that performs column operations, namely the following one:
$$P=\begin{bmatrix}
    I_{n-m_1} & \\
    X & I_{m_1}
\end{bmatrix} \quad \text{where}\quad X=-\begin{bmatrix}
    C & D\\
    0 & 0
\end{bmatrix}.$$
Note that 
\begin{eqnarray}\label{reducedform}
    P^*M^{(iii)}P&=&\left(\begin{array}{@{}c|c@{}}
  I_{n-m_1}
  & X^* \\
\hline
   &
  I_{m_1}
\end{array}\right)\left(\begin{array}{@{}c|c@{}}
  \begin{matrix}
      M_{(1)} & B\\
      C & D
  \end{matrix}
  & \begin{matrix}
      0\\
      \begin{bmatrix}
          I_{m_2} & 0
      \end{bmatrix}
  \end{matrix} \\
\hline
  0 &
  0_{m_1}
\end{array}\right)\left(\begin{array}{@{}c|c@{}}
  I_{n-m_1}
  &  \\
\hline
  X &
  I_{m_1}
\end{array}\right)= \nonumber \\
&=&\left(\begin{array}{@{}c|c@{}}
  I_{n-m_1}
  &  X^*\\
\hline
   &
  I_{m_1}
\end{array}\right)\left(\begin{array}{@{}c|c@{}}
  \begin{matrix}
      M_{(1)} & B\\
      0 & 0
  \end{matrix}
  & \begin{matrix}
      0\\
      \begin{bmatrix}
          I_{m_2} & 0
      \end{bmatrix}
  \end{matrix} \\
\hline
  0 &
  0_{m_1}
\end{array}\right)=\left(\begin{array}{@{}c|c@{}}
  \begin{matrix}
      M_{(1)} & B\\
      0 & 0_{m_2}
  \end{matrix}
  & \begin{matrix}
      0\\
      \begin{bmatrix}
          I_{m_2} & 0
      \end{bmatrix}
  \end{matrix} \\
\hline
  0 &
  0_{m_1}
\end{array}\right):=M^{(iv)}.
\end{eqnarray}

Observe that, in the process of multiplying $M^{(iii)}P$ , we are modifying the first $n-m_1$ columns of $M^{(iii)}$ by adding a linear combination determined by the transformation matrix $P$. Then, when multiplying the result on the left by $P^*$, we are modifying the first $n-m_1$ rows by adding to them linear combinations of the last $m_1$ rows. However, the last $m_1$ rows of $M^{(iii)}$ are zero; hence, left multiplication by $P^*$ does not alter the matrix $M^{(iii)}P$.

\medskip

At this point, we have finished the 4 steps in the first iteration of the algorithm.  Observe that as results we have obtained four elements: $M_{(1)}\in M_{n-m_1-m_2}(\mathbb C)$, $B\in M_{(n-m_1-m_2)\times m_2}(\mathbb C)$, $m_1 = n-rank(M)$, $m_2 = rank(N)$. At this point, it is convenient to analyze how many possibilities for $M_{(1)}$ can we have:

\medskip

Case 1:  If $m_1+m_2=n$, then $M_{(1)}$ is a zero-by-zero matrix, i.e. it does not exist, so $M$ is *congruent to \begin{equation}\label{m1+m2=n}
        M^{(iv)}=\begin{pmatrix}
        0_{m_2} & \begin{bmatrix}
          I_{m_2} & 0
      \end{bmatrix}\\
      0 & 0_{m_1}
    \end{pmatrix},
    \end{equation}
    %Look that $rk(M''')=m-m_1=m_2$ and $(M''')^2=0$. The Jordan canonical form of $M'''$ has $m_2$ Jordan blocks of order 2 and $m_1-m_2$ Jordan blocks of order 1.%
    and the algorithm finishes.    

 \medskip
 
Case 2\label{caso2}:  If $m_1+m_2 < n$ and $M_{(1)}=0_{n-m_1-m_2}$. Denote $m_3:=n-m_1-m_2$, then $$M^{(iv)}=\left(\begin{array}{@{}c|c@{}}
  \begin{matrix}
      0_{m_3} & B\\
      0 & 0_{m_2}
  \end{matrix}
  & \begin{matrix}
      0\\
      \begin{bmatrix}
          I_{m_2} & 0
      \end{bmatrix}
  \end{matrix} \\
\hline
  0 &
  0_{m_1}
\end{array}\right)$$
Notice that $B$ has $m_3$ rows and $m_2$ columns and it has full row rank. Once again, using an argument analogous to that in Step 3, there exists a matrix $W$ such that $BW=\begin{bmatrix}
    I_{m_3} & 0
\end{bmatrix}$. Considering the matrix $S=I_{m_3}\oplus V \oplus I_{m_1}$, then $$S^*M^{(iv)}S=\left(\begin{array}{@{}c|c@{}}
  \begin{matrix}
      0_{m_3} & \begin{bmatrix}
          I_{m_3} & 0
      \end{bmatrix}\\
      0 & 0_{m_2}
  \end{matrix}
  & \begin{matrix}
      0\\
      \begin{bmatrix}
          V^* & 0
      \end{bmatrix}
  \end{matrix} \\
\hline
  0 &
  0_{m_1}
\end{array}\right).$$
In order to reestablish the $\begin{bmatrix}
    I_{m_2} & 0
\end{bmatrix}$ block, take the matrix $R=I_{m_3+m_2} \oplus (V^*)^{-1} \oplus I_{m_1}$. Combining the above, and naming $P:=R^*S^*$ it follows that 
\begin{eqnarray}\label{A=0}
    M^{(v)}_2&:=&R^*(S^*M^{(iv)}S)R=PM^{(iv)}P^* \nonumber \\
   & =&\left(\begin{array}{@{}c|c@{}}
  \begin{matrix}
      0_{m_3} & \begin{bmatrix}
          I_{m_3} & 0
      \end{bmatrix}\\
      0 & 0_{m_2}
  \end{matrix}
  & \begin{matrix}
      0\\
      \begin{bmatrix}
          I_{m_2} & 0
      \end{bmatrix}
  \end{matrix} \\
\hline
  0 &
  0_{m_1}
\end{array}\right).
\end{eqnarray}

This concludes the algorithm for this case.   Note that matrices of form~\eqref{A=0} are quite similar to the ones of form~\eqref{m1+m2=n} and in the future we will treat this two cases together. 

\medskip

Case 3: If $m_1+m_2 < n$ and $M_{(1)}$ is a regular matrix, then the columns of $M^{(iv)}$ span $\mathbb{C}^{n-m_1-m_2}$, so it is possible to annihilate block $B$ by adding linear combinations of the columns of $M_{(1)}$. For this aim, we shall consider the matrix $$S=\begin{pmatrix}
        I_{n-m_1-m_2} & -M_{(1)}^{-1}B & 0\\
        0 & I_{m_2} & 0\\
        0 & 0 & I_{m_1}
    \end{pmatrix}.$$
   The product $M^{(iv)}S$ is of the form \eqref{reducedform} but with $B=0$. However, when we multiply by $S^*$ by the left, we get $$S^*M^{(iv)}S=\begin{pmatrix}
        M_{(1)} & 0 & 0\\
        -B(M_{(1)}^{-1})^*M_{(1)} & 0_{m_2} & \begin{bmatrix}
            I_{m_2} & 0
        \end{bmatrix}\\
        0 & 0 & 0_{m_1}
    \end{pmatrix}.$$
    This matrix is of form \eqref{step2}, so we can apply Step 4 in order to obtain
    \begin{equation}\label{B=0}
        M_3^{(v)}:=\begin{pmatrix}
            M_{(1)} & 0 & 0\\
            0 & 0_{m_2} & \begin{bmatrix}
                I_{m_2} & 0
            \end{bmatrix}\\
            0 & 0 & 0_{m_1}
        \end{pmatrix}.
    \end{equation}  

    Since $M_{(1)}$ is a regular matrix, the algorithm finishes.   

\medskip

Case 4: If $m_1+m_2 < n$ and $M_{(1)}$ is singular non-zero matrix, then, we repeat the whole process to the matrix $M_{(1)}$, starting with $M^{(iv)}$ as in \eqref{reducedform}. After the 4 steps, we will obtain two numbers $m_3$ and $m_4$ and a matrix $M_{(2)} \in M_{n-m_1-m_2-m_3-m_4}(\mathbb{C})$ such that $M$ is *congruent to

\begin{equation}\label{finalII}
    M^{(vi)}:=\begin{pmatrix}
        M_{(2)} & B' & 0 & 0 & 0\\
      0 & 0_{m_4} & \begin{bmatrix}
          I_{m_4} & 0
      \end{bmatrix} & 0 & 0\\
      0 & 0 & 0_{m_3} & \begin{bmatrix}
          I_{m_3} & 0
      \end{bmatrix} & 0\\
      0 & 0 & 0 & 0_{m_2} & \begin{bmatrix}
          I_{m_2} & 0
      \end{bmatrix}\\
      0 & 0 & 0 & 0 & 0_{m_1}
    \end{pmatrix}.
\end{equation}

\medskip

Iterating this algorithm until we find a non-singular matrix $M_{(\tau)}$ at step $\tau$, we will get a sequence of integer numbers $m_1 \geq m_2 \geq ... \geq m_{2\tau} \geq 0$. The most important result is that $\{m_i\}_{i=1}^{2\tau}$ are *congruence invariants and determine completely the singular part of the initial matrix $M$ (see~\cite{HS2} for details). 

\begin{definition}\label{reducedcong}
    Given a non-regular square matrix $M \in M_n(\mathbb{C)}$, a matrix $\Tilde{M}$ is called \textit{*congruence reduced form} of $M$ if it is of form \eqref{reducedform}, where the block $M_{(1)}$ does not exist, is the zero matrix, or is a regular matrix and, whenever $M_{(1)}$ exists, $\begin{bmatrix}
    M_{(1)} & B
\end{bmatrix}$ has the maximum linearly independent rows, that is $n-m_1-m_2$.
\end{definition}

The next example shows in detail how the algorithm works:

\begin{example}
Let us apply the previous algorithm to the singular matrix $M=\begin{pmatrix}
    1 & 2 & i\\
    4 & 0 & -2i\\
    -3 & 2 & 3i
\end{pmatrix}.$
Starting with Step 1, we apply Gauss method in order to reduce $M$, obtaining a matrix $S$ that is the product of elementary matrices. In this case $S=\begin{pmatrix}
    1 & 0 & 0\\
    -4 & 1 & 0\\
    -1 & 1 & 1
\end{pmatrix}$ and we have that $$SMS^*=\begin{pmatrix}
    1 & -2 & 1+i\\
    0 & -8 & -8-6i\\
    0 & 0 & 0
\end{pmatrix}.$$
Observe that $m_1=1$. Here, we have $M'=\begin{pmatrix}
    1 & -2\\
    0 & -8
\end{pmatrix} \in M_2(\mathbb{C})$ and $N=\begin{pmatrix}
    1+i\\
    -8-6i
\end{pmatrix} \in M_{2 \times 1}(\mathbb{C})$.  Since $N\neq 0$, we go to
Step 2, applying again Gauss method to $N$ in order to obtain zeros in the top rows of $N$ getting the block $\begin{pmatrix}
    0\\
    E
\end{pmatrix}$. Again, the method gives us a matrix $R \in GL_2(\mathbb{C})$ product of elementary ones. In this case, $R=\begin{pmatrix}
    8+6i & 1+i\\
    0 & 1
\end{pmatrix}$ and $RN=\begin{pmatrix}
    0\\
    -8-6i
\end{pmatrix}$. From this, $E=\begin{pmatrix}
    -8-6i
\end{pmatrix}$ and $m_2=rk(N)=1$. Consider now $T=R\oplus I_1$, then $$M''=TSMS^*T^*=\begin{pmatrix}
    56+4i & -24-20i & 0\\
    -8+8i & -8 & -8-6i\\
    0 & 0 & 0
\end{pmatrix}.$$ 
Attending to the decomposition of $RM'R^*$ mentioned at the end of Step 2, we have that that 
$$M_{(1)}=(56+4i),\quad B=(-24-20i),\quad C=(-8+8i),\quad D=(-8).$$\\
As $m_2=1>0$, we can go to Steps 3 and 4, in order to have $C=D=0$. Let us apply first step 3 in order to reduce $E$ to $I_1$ by multiplying by a $1 \times 1$-matrix $V$. In this case  $V=\begin{pmatrix}
    \frac{1}{-8-6i}
\end{pmatrix}$ and if we consider $Q=\begin{pmatrix}
    1 & & \\
     & 1 & \\
     &   & \frac{1}{-8+6i}
\end{pmatrix}$, then $$M'''=QM''Q^*=\begin{pmatrix}
    56+4i & -24-20i & 0\\
    -8+8i & -8 & 1\\
    0 & 0 & 0
\end{pmatrix}.$$
Let us now execute the Step 4, that consists in making zeros in the blocks $\begin{pmatrix}
    -8
\end{pmatrix}$ and $\begin{pmatrix}
    -8+8i
\end{pmatrix}$. In this case, it is straightforward to check that adding $8-8i$ times the third column to the first column, and adding $8$ times the third column to the second column, we get the blocks $C=D=0$. That is, consider the matrix $$P=\begin{pmatrix}
    1 & 0 & 0\\
    0 & 1 & 0\\
    8-8i & 8 & 1
\end{pmatrix},$$
and \begin{equation*}
    P^*M'''P=\begin{pmatrix}
    56+4i & -24-20i & 0\\
    0 & 0 & 1\\
    0 & 0 & 0
\end{pmatrix}:=M^{(iv)}.
\end{equation*}
In the notation of the algorithm, here $X=-\begin{pmatrix}
    C & D
\end{pmatrix}$ and $P$ is defined above.\\
So, finally we arrived to the *congruence reduced form, $M^{(iv)}$, of $M$ (see Definition \ref{reducedcong}). Let us now determine which of the four cases described above we are in after Step 4. By the case, $m_1+m_2=2 < 3=n$ and $M_{(1)}=\begin{pmatrix}
    56+4i
\end{pmatrix}$ that is non-singular, so we are in case 3. We can make a zero in block $B=\begin{pmatrix}
    -24-20i
\end{pmatrix}$ by adding the second column the first one multiplied by $\frac{24+20i}{56+4i}=\frac{89}{197}+\frac{64}{197}i$. Considering the matrix $$S_1=\begin{pmatrix}
    1 & \frac{89}{197}+\frac{64}{197}i & 0\\
    0 & 1 & 0\\
    0 & 0 & 1
\end{pmatrix},$$ we obtain $$S_1^*M^{(iv)}S_1=\begin{pmatrix}
    56+4i & 0 & 0\\
    \frac{5240}{197}-\frac{3228}{197}i & 0 & 1\\
    0 & 0 & 0
\end{pmatrix}.$$
Again, we have to restablish a zero in the position of matrix $B$. For this, we add $-\left( \frac{5240}{197}-\frac{3228}{197}i \right)$ times the third column to the first one, by multiplying by the right side by the matrix $$P_1=\begin{pmatrix}
    1 & 0 & 0\\
    0 & 1 & 0\\
    - \frac{5240}{197}+\frac{3228}{197}i & 0 & 1
\end{pmatrix}$$ and finally we get $$P_1^*S_1^*M^{(iv)}S_1P_1=\begin{pmatrix}
    56+4i & & \\
     & 0 & 1\\
     & & 0
\end{pmatrix}=M_3^{(v)}.$$
By combining all the change matrices obtained at each step, we can conclude that $$(P_1^*S_1^*P^*QTS)M(S^*T^*Q^*PS_1P_1)=M_3^{(v)}.$$

%in order to achieve the *congruence reduced form of $M$.

%$$M\sim M_{(1)}\oplus J_1^{[m_1-m_2]}(0)\oplus J_2^{[m_2]}(0)=M_{(1)}\oplus J_2(0)=\begin{pmatrix}
    %56-44i & & \\
     %& 0 & 1\\
     %& & 0
%\end{pmatrix}$$

\end{example}

\medskip

In the algorithm described above, given a matrix $M$ we construct its reduced *congruence form. This reduced *congruence form yields  a sequence of integers $m_1 \geq m_2 \geq ... \geq m_{2\tau} \geq 0$ and nonsingular square matrix $M_{(\tau)}$ of dimension $dim(M_{(\tau)})= dim \,M - \sum_{i=1}^{2\tau} m_i$. Note that $M_{(\tau)}$ does not exist if $dim\,M=\sum_{i=1}^{2\tau}m_i$.

\medskip

We finish this section setting the relation between the matrix $M_{(\tau)}$ and the announced decomposition~\eqref{eq:descomposicion-singular}. The details for the proof can be found in \cite[Theorem~6]{HS2}.

\begin{theorem}\label{important}
Let $M \in M_n(\mathbb{C})$ be a square singular matrix.  Then there exist integers $\tau, m_1,...,m_{2\tau}$ such that the *congruence reduced form or $M$ has a block $M_{(\tau)}$ of size $n-\sum_{i=1}^{2\tau} m_i$ and: 
\begin{itemize}
\item[a)] $M_{(\tau)}$ is regular or it does not exist if $n=\sum_{i=1}^{2\tau} m_i$.
    \item[b)] The integers $\tau, m_1,...,m_{2\tau}$ and the *congruence class of $M_{(\tau)}$ are *congruence invariants of $M$.
    \item[c)] $M$ is *congruent to $M_{(\tau)} \oplus Q$, in which 
    \begin{equation*}\label{Q}
        Q=\begin{bmatrix}
            0_{m_{2\tau}} & \begin{bmatrix}
                I_{m_{2\tau}} & 0
            \end{bmatrix} & & &  &   \\
             & 0_{m_{2\tau-1}} & \begin{bmatrix}
                I_{m_{2\tau-1}} & 0
            \end{bmatrix} & & & \\
             & & \ddots & \ddots & & \\
              & & & 0_{m_3} & \begin{bmatrix}
                I_{m_3} & 0
            \end{bmatrix} & \\
             & & & & 0_{m_2} & \begin{bmatrix}
                I_{m_2} & 0
            \end{bmatrix}\\
             & & & & & 0_{m_1}
        \end{bmatrix}.
    \end{equation*}
    \item[d)] $M$ is *congruent to $M_{(\tau)} \oplus N$ in which $N$ is of the following form \begin{equation*}
    N=J_1^{[m_1-m_2]}(0) \oplus J_2^{[m_2-m_3]}(0) \oplus J_3^{[m_3-m_4]}(0) \oplus ... \oplus J_{2\tau -1}^{[m_{2\tau -1}-m_{2\tau}]}(0) \oplus J_{2\tau}^{[m_{2\tau}]}(0),
    \end{equation*}
    where $J_n^{[m]}(0)=J_n(0)\oplus ... \oplus J_n(0)$ ($m$ times).
\end{itemize}
\end{theorem}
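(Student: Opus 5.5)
The plan is to run the regularization algorithm of Section~\ref{sec:reg-alg} and track what it produces, using only *congruence transformations so that every intermediate matrix is *congruent to $M$. First I will prove a) and c) by induction on $n$. If $M$ is regular there is nothing to prove ($\tau=0$). Otherwise, Steps 1--4 produce the *congruence reduced form \eqref{reducedform}, with the integers $m_1=n-\mathrm{rank}(M)$ and $m_2=\mathrm{rank}(N)$ and a block $M_{(1)}$ of size $n-m_1-m_2<n$. Cases 1, 2 and 3 terminate directly in the forms \eqref{m1+m2=n}, \eqref{A=0} and \eqref{B=0}. Each of these is already of the shape $M_{(\tau)}\oplus Q$ with $\tau=1$ or $\tau=2$; in Case 2 we take $m_4=0$ and let $M_{(2)}$ be empty.

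In Case 4, $M_{(1)}$ is singular and nonzero, so I apply the induction hypothesis to $M_{(1)}$. I then extend the resulting change-of-basis matrix by the identity on the last $m_1+m_2$ coordinates. Conjugating \eqref{reducedform} by it turns the top-left block into $M_{(2)}$ plus the staircase, and replaces $B$ by some $B'$, which gives \eqref{finalII}. The remaining task is to normalize $B'$ and to clear the fill-in this creates. There are two subcases:
\begin{itemize}
\item if $M_{(\tau)}$ is regular, kill its row of $B'$ exactly as in Case 3;
\item in the staircase rows, use the identity blocks $[I\ 0]$ as pivots, column-reducing as in Step~4 and row-reducing as in Case~2.
\end{itemize}
The point to verify is that each elimination only adds multiples of later rows or columns whose relevant entries are zero. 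This is the same observation used after \eqref{reducedform}, and it ensures that the upper staircase blocks are not destroyed.

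For d), I will show that $Q$ is *congruent to (indeed permutation-similar to) $N$. The staircase $Q$ is nilpotent, and its action on standard basis vectors is a chain map $e\mapsto$ the basis vector in the next block. Since the $[I\ 0]$ blocks are injective on the first $m_{k+1}$ coordinates, the chains have lengths determined by the sequence $m_1\ge m_2\ge\cdots\ge m_{2\tau}$. Exactly $m_k-m_{k+1}$ chains have length $k$. Reordering the basis by a permutation matrix $P$, which satisfies $P^*=P^{-1}$, so that *congruence and similarity coincide here, groups each chain into a $J_k(0)$. This yields $N$.

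For b), the uniqueness in Theorem~\ref{singular} shows that the multiset of singular blocks $J_{r}(0)$ and the *congruence class of the regular part are invariants. By d), the numbers $m_k-m_{k+1}$ are the multiplicities of the $J_k(0)$ summands, so the $m_k$ are recovered by telescoping from $m_{2\tau+1}=0$. The integer $\tau$ is then read off from the largest block size: the largest block has size $2\tau$ or $2\tau-1$, and we adopt the convention of padding with $m_{2\tau}=0$.

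The main obstacle will be the inductive clean-up in Case~4. One must check carefully that clearing the new $B'$ and the reintroduced $C,D$-type entries never reintroduces nonzero entries elsewhere, which requires a precise ordering of the eliminations (bottom staircase first, working upward). If this becomes messy, I will instead fall back on citing \cite[Theorem~6]{HS2} for that step.
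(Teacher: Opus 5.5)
Your proposal is correct, but it does not follow the paper's route. The paper gives no argument of its own here. It refers to \cite[Theorem~6]{HS2} for all four items and only sketches d): $N$ is the Jordan form of $Q$, and the digraphs of $Q$ and $N$ are isomorphic, so a permutation similarity (a *congruence, since $P^*=P^{-1}$) carries $Q$ to $N$. Your chain argument for d) is exactly that sketch made explicit. What is genuinely different is the rest:
\begin{itemize}
\item Your induction along the algorithm for a) and c) makes the existence part self-contained.
\item You derive b) from d) plus the uniqueness clause of Theorem~\ref{singular}. The multiplicities $m_k-m_{k+1}$ of the $J_k(0)$ summands recover the $m_k$, and $\tau=\lceil L/2\rceil$ with $L$ the largest block size.
\end{itemize}
This gives independence from all choices made in Steps 1--4 essentially for free. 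The deep input, uniqueness, is still cited, only from \cite{HS1} instead of \cite{HS2}. For $\tau$ you must also justify $L\in\{2\tau-1,2\tau\}$: iteration $\tau$ is only run when $M_{(\tau-1)}$ is singular (or zero, in Case~2), so $m_{2\tau-1}\geq 1$. If you want b) without any uniqueness theorem, use the form $f(x,y)=xMy^*$. Then $m_1$ is the dimension of its left radical $L$, and $n-m_2$ is the dimension of $U=\{x: f(x,L)=0\}\supseteq L$. Moreover, $M_{(1)}$ is the Gram matrix of the form induced on $U/L$. All of these are intrinsic, so induction gives b) directly.

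In Case~4, two points of your clean-up need correcting.

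First, conjugating \eqref{reducedform} by $P_1\oplus I_{m_2+m_1}$ does not yet give \eqref{finalII}. It gives $M_{(\tau)}\oplus Q'$ in the top-left corner, with the whole column block $P_1B$ in the $m_2$-columns. Consider the rows of $P_1B$ in the last staircase block of $Q'$ (this block becomes $m_3$). They have full row rank, because $[M_{(1)}\ B]$ does and the corresponding rows of $M_{(\tau)}\oplus Q'$ vanish. This is what gives $m_3\le m_2$ and lets you normalize these rows to $[I_{m_3}\ 0]$ as in Case~2.

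Second, your invariant that ``each elimination only adds multiples of rows or columns whose relevant entries are zero'' is not accurate. To kill the entries of $P_1B$ in a staircase row block $m_k$ with $k\ge 4$, you must add multiples of the columns of block $m_{k-1}$, whose pivot is $[I_{m_k}\ 0]$ in row block $m_k$. The compensating row operation then adds multiples of row block $m_{k-1}$, which contains its own pivot $[I_{m_{k-1}}\ 0]$, to row block $m_2$. The same happens when you clear the $M_{(\tau)}$ rows as in Case~3.

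The correct invariant is that all fill-in lands in row block $m_2$. It is then removed by column operations from the $m_1$-columns, whose only nonzero entries are the pivot $[I_{m_2}\ 0]$ in row block $m_2$. The compensating row operations add multiples of the zero row block $m_1$, so they change nothing. With this bookkeeping the order of eliminations is irrelevant, and c) follows.
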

%In the next lines, a sketch of the proof is given 

\medskip

The proof of part d) of the previous theorem is explained in detail in \cite{HS2}. It can be divided in two steps: the first one consists on showing that $N$ is the Jordan canonical form of $Q$. The second one  proves that the directed graphs of $Q$ and $N$ are isomorphic, and therefore it is possible to conclude that the Jordan canonical form of $Q$ can be achieved via a permutation similarity, which is a *congruence invariant.

%\textbf{Example 1} [HS, Example 7] Consider the matrix $$M=\begin{pmatrix}
 %   1 & 2\\
  %  0 & 1
%\end{pmatrix}$$ 

%%%%%%%%%%%%%%%%%%%%%%%%%%%%%%%%%%%%%%%%%%%%%%%%%%%%%%%%%%%%%%%%%%%%%

\section{Classification of 1-abelian complex structures on NLAs}\label{sec:4}
In this section we study the two problems about complex structures in the setting of 1-abelian complex structures: its classification up to equivalence using the previous analysis of *congruence,  and the identification of the underlying Lie algebras. We focus on dimensions 4, 6 recovering the already known results (Sections~\ref{sub:4}, \ref{sub:6} respectively), we  completely analyze the 8-dimensional case and 8 (Section~\ref{sub:8}) and give some interesting results for the general case (Section~\ref{sub:general}).
%\begin{remark}
%    Let $\mathfrak{g}$ be a $2n$-dimensional nilpotent Lie algebra and $M \in M_{n-1}(\mathbb{C})$, we denote by $J_M$ the abelian complex structure on $\mathfrak{g}$ given by \begin{equation}
 %       \begin{cases}
 %           d\omega^1=\dots=d\omega^{n-1}=0\\
%            d\omega^n=\omega M\bar\omega^T
 %       \end{cases},
%    \end{equation}
%where $\omega=\begin{pmatrix}
%    \omega^1 & \dots & \omega^n
%\end{pmatrix}$.
%\end{remark}

%According to Definition \ref{equivalence}, the following lemma is trivially verified
%\begin{lemma}
%    If $M$ is *congruent to $M'$, then $J_M \sim J_{M'}$.
%\end{lemma}

As a first approximation to a classification of 1-abelian complex structures up to equivalence, we have the following result, where we have adapted Theorem~\ref{important} and we are using that *congruent matrices provide equivalent complex structures:

\begin{theorem}\label{class}
    Let ($\mathfrak{g},J$) be a $2n$-dimensional nilpotent Lie algebra endowed with a $1$-abelian complex structure $J$ \eqref{ouracs}. Then, J is equivalent to 
    \begin{equation*}
    \begin{cases}
            d\sigma^1=...=d\sigma^{n-1}=0,\\
            d\sigma^n=\sigma Q \overline{\sigma}^T,
        \end{cases}
    \end{equation*}
    where $Q\in M_{n-1}(\mathbb C)$ splits as $Q= R\oplus S$ where: \begin{itemize}
    \item $R = \bigoplus_{i=1}^q H_{2k_i}(\mu_i)\oplus \bigoplus_{j=1}^p\varepsilon_je^{\nicefrac{i\phi_j}{2}}\Delta_{n_j}$;\\[-2pt]
        \item $S=J_1^{[m_1-m_2]}(0) \oplus J_2^{[m_2-m_3]}(0) \oplus J_3^{[m_3-m_4]}(0) \oplus ... \oplus J_{2\tau -1}^{[m_{2\tau -1}-m_{2\tau}]}(0) \oplus J_{2\tau}^{[m_{2\tau}]}(0)$ for some $m_1 \geq m_2 \geq ... \geq m_{2\tau} \geq 0$;\\[-2pt]
        \item $2\displaystyle\sum_{i=1}^q k_i + \sum_{j=1}^pn_j + \sum_{s=1}^{2\tau}m_s =n-1$.
    \end{itemize} 
\end{theorem}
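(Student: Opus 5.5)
The plan is to combine Lemma~\ref{equivalence} with the *congruence classification of Section~\ref{sec:3}. Given $J=J_M$ as in \eqref{ouracs}, Lemma~\ref{equivalence} with $c=1$ says that $J_M\sim J_{PMP^*}$ for every $P\in GL(n-1,\mathbb C)$. Concretely, the new basis is $\sigma^i$ with $\omega=\sigma P$ and $\sigma^n=\omega^n$. So it suffices to show that $M$ is *congruent to a matrix $Q=R\oplus S$ of the stated shape. The rescaling by $c$ is not needed for this existence statement.

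First I will separate the singular part. If $M$ is regular, set $S$ empty, so $\tau=0$ and all $m_s=0$, and go directly to the regular step. Otherwise, apply the regularization algorithm of Section~\ref{sec:reg-alg} and Theorem~\ref{important}(d). This gives integers $m_1\ge\cdots\ge m_{2\tau}\ge 0$ and a regular block $M_{(\tau)}$ (possibly absent) of size $n-1-\sum_s m_s$ with
\[
M\sim_* M_{(\tau)}\oplus J_1^{[m_1-m_2]}(0)\oplus\cdots\oplus J_{2\tau}^{[m_{2\tau}]}(0).
\]
The second summand is exactly the $S$ of the statement. Here I use that *congruence is compatible with direct sums, i.e. $P_1A_1P_1^*\oplus P_2A_2P_2^*=(P_1\oplus P_2)(A_1\oplus A_2)(P_1\oplus P_2)^*$. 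This lets me replace $M_{(\tau)}$ by any *congruent matrix without touching $S$.

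Next I treat the regular block $A:=M_{(\tau)}$ with the algorithm (a)--(c) of Section~\ref{section3.1}. I compute the Jordan form of $C(A)$. By Lemma~\ref{eigenvalue} and Remark~\ref{remark-eigenvalues}, its eigenvalues split into unimodular ones $e^{i\phi_j}$, with blocks of sizes $n_j$, and pairs $\mu_i,\bar\mu_i^{-1}$ with $|\mu_i|>1$ and equal Jordan structure, with blocks of sizes $k_i$. Let $B=\bigoplus H_{2k_i}(\mu_i)\oplus\bigoplus e^{i\phi_j/2}\Delta_{n_j}$. Lemma~\ref{simil-cosquare}(i),(iv) gives $C(B)\sim_{sim}C(A)$, and Theorem~\ref{lemma5} then yields $A\sim_*(-D_-)\oplus D_+$ with $B\sim_*D_-\oplus D_+$.

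The main obstacle is turning this into a blockwise sign choice. The tool is the uniqueness in Theorem~\ref{HS}: decompose $D_-$ and $D_+$ into canonical summands. Their union must match the summands of $B$ up to permutation, so $D_-$ is *congruent to a sub-sum of the blocks of $B$ and $D_+$ to the complementary sub-sum. Negating the $D_-$ part flips signs blockwise. Lemma~\ref{simil-cosquare}(ii) then absorbs the signs on the $H$-blocks, leaving only signs $\varepsilon_j$ on the $\Delta$-blocks. This gives $A\sim_* R$.

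Finally, the dimension count $2\sum k_i+\sum n_j+\sum m_s=n-1$ holds because $R\oplus S$ has the same size as $M\in M_{n-1}(\mathbb C)$. Composing the *congruences gives $P$ with $PMP^*=Q$, which completes the proof.
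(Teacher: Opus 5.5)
Your proposal is correct and follows essentially the paper's route, which obtains Theorem~\ref{class} by combining Theorem~\ref{important}(d) with the regular-case algorithm (a)--(c) and the fact (Lemma~\ref{equivalence} with $c=1$) that *congruent matrices give equivalent structures. Your use of the uniqueness in Theorem~\ref{HS} to turn $A\sim_*(-D_-)\oplus D_+$ into a blockwise sign choice fills in what the paper only asserts in step (c); note also that the equal Jordan structure at $\mu_i$ and $\bar\mu_i^{-1}$, which you need for $C(B)\sim_{sim}C(A)$, does not follow from Lemma~\ref{eigenvalue} alone but from the similarity $C(A)=A^{-1}\bigl((C(A)^*)^{-1}\bigr)A$.
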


The previous theorem gives rise to the following definition:

\begin{definition}
Let $Q = R\oplus S$ be as in the previous theorem.  We will say that $R$ is the \emph{regular part of $Q$} and $S$ is the \emph{singular part of $Q$}.
\end{definition}

It is worth noticing that if $Q$ is a regular matrix, then the term $S$ does not appear (all $m_i=0$) and $Q=R$.  However, if $Q$ is singular, the term $R$ could appear in the decomposition of $Q$, since this term corresponds to the matrix $M_{(\tau)}$ of the regularization algorithm.  This observation directs us to the next notion:

\begin{definition}\label{def:completely-singular}
Let $Q = R\oplus S$ be a singular matrix splitted as in Theorem~\ref{class}.   We will say that \emph{$Q$ is completely singular} if its regular part $R$ does not appear in the decomposition.  
\end{definition}

\medskip

However, two 1-abelian complex structures that are equivalent may not be represented by *congruent matrices, as the next example shows:
\begin{example}
    Consider the complex structures $J_M$ and $J_{M'}$ where 
    $$M=\begin{pmatrix}
        0 & 1\\
        2i & 0
    \end{pmatrix},\quad M'=\begin{pmatrix}
        0 & 1\\
        2 & 0
    \end{pmatrix}.$$
Note that $J_M \sim J_{M'}$, indeed, take $$c=e^{-i\frac{\pi}{4}}; \quad P=\begin{pmatrix}
    e^{i\frac{\pi}{4}} & 0\\
    0 & 1
\end{pmatrix},$$ but $M$ is not *congruent to $M'$ because $$C(M)=\begin{pmatrix}
    2 & 0\\
    0 & \nicefrac{1}{2}
\end{pmatrix}\nsim_{sim}C(M')=\begin{pmatrix}
    2i & 0\\
    0 & \frac{1}{2}i
\end{pmatrix}$$that is, their *cosquares are not similar, because they have distinct eigenvalues. However, \emph{the type} of the eigenvalues of $C(M)$ and $C(M')$ is the same in the sense that each matrix has two different eigenvalues of modulus distinct of $1$.
\end{example}
The behavior that we have shown in the previous example can be generalized: if $J_M \sim J_{M'}$, the matrices $M$ and $M'$ have similar properties in terms of the decomposition given in Theorem~\ref{class}.
\begin{lemma}
    Let $M, M' \in M_n(\mathbb{C})$ such that $J_M \sim J_{M'}$. Then the number of blocks $H_{2i}(\mu), \, \Delta_j$ and $J_k(0)$ in Theorem~\ref{class} are the same for $M$ and for $M'$.
\end{lemma}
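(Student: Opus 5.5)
By Lemma~\ref{equivalence}, $J_M\sim J_{M'}$ means $M'=c\,PMP^*$ for some $c\in\mathbb C^*$ and $P\in GL(n,\mathbb C)$. Following \eqref{2-step-M}, the plan is to split this into the two steps $M\mapsto PMP^*$ and $PMP^*\mapsto c\,PMP^*$. The first step is a *congruence, so by the uniqueness part of Theorem~\ref{HS} (together with Theorem~\ref{important}) it preserves the full canonical decomposition. Hence it suffices to show that multiplication by a nonzero scalar $c$ preserves the number of blocks of each type.

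Write $c=re^{i\alpha}$ with $r>0$. The positive factor is harmless: $rA=(\sqrt r I)A(\sqrt r I)^*$ is *congruent to $A$. So it remains to treat $c=e^{i\alpha}$. Take $A=R\oplus S$ in canonical form, as in Theorem~\ref{class}. Then $e^{i\alpha}A=e^{i\alpha}R\oplus e^{i\alpha}S$, and I will handle each summand separately, using that a direct sum of *congruences is again a *congruence.

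\emph{Singular blocks.} For $J_k(0)$, take $D=\mathrm{diag}(1,e^{-i\alpha},e^{-2i\alpha},\dots)$. Conjugating the superdiagonal entries gives $D\,(e^{i\alpha}J_k(0))\,D^{-1}=J_k(0)$. To make this a *congruence, I instead use $D_\beta=\mathrm{diag}(e^{i\beta_1},\dots,e^{i\beta_k})$, for which $D_\beta^{-1}=D_\beta^*$. I choose $\beta_j-\beta_{j+1}=-\alpha$, which makes every superdiagonal entry equal to $1$. Thus $e^{i\alpha}J_k(0)\sim_*J_k(0)$, and the singular part $S$, including its block sizes, is unchanged. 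The same conclusion also follows from the invariants $m_i$: the regularization algorithm computes ranks, and ranks are unchanged by scaling.

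\emph{Regular blocks.} From the definition of the *cosquare, $C(e^{i\alpha}B)=e^{2i\alpha}C(B)$. By Lemma~\ref{simil-cosquare}, the Jordan form of $C(e^{i\alpha}H_{2k}(\mu))$ is $J_k(e^{2i\alpha}\mu)\oplus J_k(e^{2i\alpha}\bar\mu^{-1})$. These eigenvalues still have modulus different from $1$, and the block size $k$ is preserved. Likewise, $C(e^{i\alpha}\varepsilon e^{i\phi/2}\Delta_m)\sim_{sim}J_m(e^{i(\phi+2\alpha)})$, which has modulus $1$ and the same size $m$.

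Rotating by a unimodular scalar preserves both the modulus of each eigenvalue and the Jordan block sizes. Therefore the Jordan form of $C(e^{i\alpha}R)$ has exactly as many blocks paired off the unit circle, with the same sizes, as that of $C(R)$, and likewise on the unit circle. Running the algorithm (a)–(c) on $e^{i\alpha}R$ then produces a canonical form with the same number of $H_{2k}$ blocks of each size and the same number of $\Delta_m$ blocks of each size. Only the parameters $\mu$, $\phi$ and possibly the signs $\varepsilon$ change. The uniqueness part of Theorem~\ref{HS} makes this count well defined.

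The main subtlety is this last step. One must argue that the block counts are read off from the Jordan structure of the *cosquare alone, and not from the signs $\varepsilon_j$. This holds by Theorem~\ref{lemma5}: that result fixes which cosquare-Jordan block corresponds to which canonical block, and only the signs remain ambiguous.
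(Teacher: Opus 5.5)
Your proof is correct. For the regular part it is essentially the paper's argument; it differs in how it is organized and in how it handles the singular part.

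\textbf{Regular part.} The paper also uses the identity $C(\lambda A)=\frac{\lambda}{\bar\lambda}C(A)$. It notes that $M\sim_*\frac1c M'$, so the cosquare spectrum is rotated by the unimodular factor $\bar c/c$. Hence pairs $(\mu,\bar\mu^{-1})$ off the unit circle, and eigenvalues on it, are preserved.

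\textbf{Where your route differs.} The paper applies the cosquare argument to $M$ and $M'$ directly. This tacitly requires restricting to the regular parts, since $C$ is only defined for invertible matrices. For the singular part the paper only says that the $m_j$ are *congruence invariants. Strictly, $M'$ is *congruent to $cM$, not to $M$, so one also needs that the singular part is unchanged by scaling. Your blockwise approach supplies this step explicitly. You first absorb $r>0$ in $c=re^{i\alpha}$ by a *congruence, and then show $e^{i\alpha}J_k(0)\sim_* J_k(0)$ with the diagonal unitary $D_\beta$. You also track Jordan block sizes rather than just eigenvalues, which makes the count of blocks of each size rigorous, whereas the paper only speaks of eigenvalues.

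\textbf{Two small remarks.}
\begin{itemize}
\item Your first choice $D=\mathrm{diag}(1,e^{-i\alpha},e^{-2i\alpha},\dots)$ has the wrong sign: it produces superdiagonal entries $e^{2i\alpha}$. The correct matrix is $\mathrm{diag}(1,e^{i\alpha},e^{2i\alpha},\dots)$, which is exactly what your condition $\beta_j-\beta_{j+1}=-\alpha$ yields. So the final argument stands.
\item You do not need Theorem~\ref{lemma5} for the last step. Let $R'$ be the canonical form of $e^{i\alpha}R$. Lemma~\ref{cosquaresim} gives $C(R')\sim_{sim}e^{2i\alpha}C(R)$. Lemma~\ref{simil-cosquare}, together with $C(A\oplus B)=C(A)\oplus C(B)$, computes the Jordan form of $C(R')$ summand by summand. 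Uniqueness of the Jordan form then fixes the number and sizes of the $H_{2k}$ and $\Delta_m$ summands, independently of the signs $\varepsilon_j$.
\end{itemize}
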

\begin{proof}
    If $J_M \sim J_{M'}$, then there exist $c \in \mathbb{C} \setminus \{0\}$ and $P \in GL(n-1, \mathbb{C})$ such that $M'=c(PMP^*)$, so $M$ is *congruent to $\frac{1}{c}M'$ and by Lemma~\ref{cosquaresim}, $C(M) \sim_{sim} C(\frac{1}{c}M')$.  Since $C(\lambda M) = \frac{\lambda}{\bar \lambda} C(M)$ for any $\lambda\in\mathbb C^*$, we obtain that $$Sp(C(M))=Sp\biggl( C\biggl(\frac{1}{c}M'\biggl)\biggl)=\frac{\bar c}{c}Sp(C(M')).$$If $(\mu, \bar \mu^{-1})$ is a pair of eigenvalues of modulus $>1$ of $C(M)$ (so they give a block $H_{2k}(\mu)$ in the *congruence representative of $M$), then $\left( \nicefrac{c\mu}{\bar c}, \nicefrac{c\bar \mu^{-1}}{\bar c}\right)$ is another such pair for $C(M')$ and they give another block $H_{2k}(\nicefrac{c\mu}{\bar c})$ in the *congruence representative of $M'$ (note that $|\mu|=|\nicefrac{c\mu}{\bar c}|$). Also, if $z$ is an eigenvalue of $C(M)$ with $|z|=1$, then $\nicefrac{cz}{\bar c}$ is an eigenvalue of $C(M')$ of modulus 1.  Finally, the number of blocks of the singular part is a *congruence invariant since all the numbers $m_j$ are.
\end{proof}

%{\color{red}
In what follows we classify $2n$ NLAs endowed with 1-abelian complex structures when $n\leq 4$.  We are interested in 1-abelian complex structures $J_M$ that are not trivial extensions of $J_{N}$ with $N$ a matrix of lower dimension of $M$.  Observe that if $M = N\oplus 0_m$ and $\{\omega^1,\ldots, \omega^n\}$ is an adapted basis, then $d\omega^n\in\langle\omega^1,\ldots, \omega^{(n-1)-m}\rangle\wedge \langle\overline\omega^1,\ldots, \overline\omega^{(n-1)-m}\rangle$.  Taking into account Theorem~\ref{class} and the description of the *congruence blocks, this condition can only happen if $m_1-m_2\neq 0$.

This discussion leads to the following definition:

\begin{definition}\label{def:maximal-type}
Let $(\mathfrak{g}, J_M)$ be 1-abelian complex structure on a $2n$-dimensional NLA where $M$ is a singular matrix. We will say that $J_M$ is \emph{of maximal type} if $m_1 = m_2$, being $m_1$ and $m_2$ the *congruence invariants defined in the regularization algorithm.
\end{definition}

%We can check very quickly if a matrix $M$ is not of maximal type:  suppose that the matrix $M$ has a zero row, let us say, the $i$-row.  Then, if the  corresponding $i$-column is also zero, then $M$ is not of maximal type, since the row and column permutation matrices induce a *congruence with $M' \oplus 0_1$ as it is shown in the following example.  
%\begin{example}
%    Consider the complex structure $J_M$ where $M\in M_4(\mathbb C)$ given by: $$M=\begin{pmatrix}
%        4 & 0 & i & 2\\
%        0 & 0 & 0 & 0\\
 %       6 & 0 & -1 & -1\\
 %       -5+i & 0 & 7 & 1
%    \end{pmatrix}$$
%    then for $$P:=P_{24}=\begin{pmatrix}
 %       1 & & & \\
 %        &  & & 1\\
%         & & 1 & \\
 %        &1 & & 
%    \end{pmatrix}$$
%    we have that $$PMP^*=\left(\begin{array}{@{}c|c@{}}
 % \begin{matrix}
%      4 & 2 & i\\
%    -5+i & 1 & 7\\
%    6 & -1 & -1
%  \end{matrix}
 % & \begin{matrix}
%      0
%  \end{matrix} \\
%\hline
%  0 &
%  0
%\end{array}\right),$$ and therefore $M$ is not of maximal type.
%\end{example}

%Taking into account the complex equations~\eqref{ouracs}, $J_M$ is not of maximal type if and only if there exists an index $i\in\{1,\ldots, n-1\}$ such that $\omega^i$ and $\omega^{\bar i}$ do not appear in the expression of $d\omega^n$.

%\begin{example}
%For instance $d\omega^1 = d\omega^2 = 0,\,  d\omega^3 =\omega^{1\bar2}$ is a complex structure of maximal type.  However, $d\omega^1 = d\omega^2 =  0,\, d\omega^3 =\omega^{1\bar1}$ is not.
%\end{example} 

%The previous example shows that $J_M$ can be of maximal type even if the matrix $M$ is singular.  
There exists a lower bound for the rank of a squared matrix $M$ to provide a complex structures of maximal type:

\begin{lemma}\label{lemma-rango-maximal}
Let $M\in M_n(\mathbb C)$ be a singular matrix of rank $(M) = r<n$.  If $J_M$  is of maximal type, then  $r\geq \lfloor\frac{n+1}{2}\rfloor.$    
\end{lemma}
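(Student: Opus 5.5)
Throughout, let $m_1,m_2$ be the invariants produced by Steps 1 and 2 of the regularization algorithm of Section~\ref{sec:reg-alg}. The lower bound on the rank will come from just two facts. First, $m_1 = n - r$ is the nullity of $M$; this is immediate from Step 1, where the Gaussian elimination yields exactly $n-r$ zero rows in $SMS^*$. Second, $m_2$ is the rank of $N$, the block of $M^{(i)}$ of size $(n-m_1)\times m_1$.

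The key step is the bound $m_2 \le n - m_1$. It holds because $N$ has only $n-m_1$ rows, so its rank cannot exceed $n-m_1$. Equivalently, in the decomposition \eqref{step2} the block $E$ has $m_2$ rows sitting inside the $n-m_1$ rows of $RN$. Alongside this we have $m_2\le m_1$, also noted in Step 2, but it is the row-count bound that matters here.

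Now impose the maximal type hypothesis $m_1=m_2$ (Definition~\ref{def:maximal-type}). Combined with the bound above, this gives $n-r = m_1 = m_2 \le n-m_1 = r$, so $n \le 2r$ and hence $r \ge n/2$. Since $r$ is an integer, $r\ge \lceil n/2\rceil = \lfloor \frac{n+1}{2}\rfloor$, which is the claim.

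The only point needing care is that $m_1,m_2$ are well defined, i.e. independent of the choices of $S$ and $R$ in the algorithm. This is given by the invariance statement in Theorem~\ref{important}(b). It is also worth checking the degenerate case $N=0$: there $m_2=0$, and maximal type would force $m_1=0$, contradicting the singularity of $M$. So under the hypotheses $N\neq 0$ and the argument above applies. I do not expect any step to be a genuine obstacle; the whole content is the inequality $\operatorname{rank}(N)\le$ number of rows of $N$.
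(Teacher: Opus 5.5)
Your proposal is correct and is essentially the paper's argument. The paper uses $m_1+m_2\le n$ together with $m_1=m_2=n-r$ to get $n\le 2r$. Your row-count bound $\mathrm{rank}(N)\le n-m_1$ is exactly that inequality, and you make explicit why it holds.
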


\begin{proof}
Recall that by definition, $m_1= n-r$.  In order to apply the regularization algorithm we need that $m_1+m_2\leq n$, where $m_2\leq m_1$.  Imposing $m_1=m_2$ we obtain that $2n-2r\leq n$ or equivalently, $n\leq 2r$.
%According to the previous discussion, the minimum number of summands for $d\omega^{n+1}$ for a $J_M$ of maximal type is precisely $\lfloor\frac{n+1}{2}\rfloor$, corresponding for the following simple expressions:
%$$d\omega^{n+1} = \sum_{j=1}^{k} A_{(2j-1)2j}\omega^{(2j-1)\,\overline{2j}},\quad \text{ if }n = 2k,$$ or 
%$$d\omega^{n+1} = \sum_{j=1}^{k} A_{(2j-1)2j}\omega^{(2j-1)\,\overline{2j}} + A_{(n-1)n}\,\omega^{(n-1)\bar n},\quad \text{ if }n = 2k+1.$$  Observe that the associated matrices have rank $=\lfloor\frac{n+1}{2}\rfloor$.
\end{proof}

%Analyzing the regularization algorithm from this perspective, we obtain an important obstruction to the value of invariant $m_2$:

%\begin{proposition}\label{prop:m1m2}
%Let $M\in M_n(\mathbb C)$ be a singular matrix and consider the 1-abelian complex structure $J_M$ associated to $M$.  Then,
%$$M \text{ is of maximal rank} \Longleftrightarrow m_2 = m_1.$$
%\end{proposition}

%\begin{proof}
%It follows directly from equations~\eqref{N=0} and \eqref{util-max-tipe}.  Recall that $m_2 = $ rank $N$ in~\eqref{step1-regularization}.  Observe that in the next steps of the algorithm is not possible to obtain new zero rows.
%\end{proof}

\begin{remark}\label{rmk:singular-m}
Observe that in the case of structures of maximal type, the matrix $M_{(1)}$ in \eqref{reducedform} has dimension $2r-n$.  It and can be a regular matrix, non-regular or even does not exist in the particular case $2r = n$.
\end{remark}

\begin{remark}\label{Hermitian-max}
Hermitian matrices are diagonal under the *congruence.  That means that a Hermitian matrix provides a complex structure of maximal type if and only if it is regular.
\end{remark}

%}

\medskip

Next, we classify 1-abelian complex structures of maximal type in dimensions 4 and 6 recovering already known results and provide a new classification for dimension 8 identifying the underlying Lie algebras. We obtain a list of nine NLAs of dimension~8 admitting this type of structures (see Table~\ref{Tabla-dim8} for details).  We also set new results for the general case, in arbitrary dimensions.
%and set new results for the general case.

%%%%%%%%%%%%%%%%%%%%%%%%%%%%%%%%%%%%%%%

\subsection{The 4-dimensional case}\label{sub:4}

Consider $(\mathfrak g, J_M)$ given by~\eqref{ouracs} where $M\in M_{1}(\mathbb C)$, i.e., $M =(A)$ is simply a complex number and $d\omega^2 = A\omega^{1\bar1}$.  Clearly:
\begin{itemize}
\item[(i)] $M$ is Hermitian if and only if $A\in \mathbb R$;
\item[(ii)] $M$ is regular if and only if $A\neq 0$;
\item[(iii)] $M$ is singular if and only if $A=0$.
\end{itemize}

In this situation, for cases $(i)$ and $(ii)$ it is possible to rescale the 1-form $\omega^2$ in order to obtain very simplified complex equations, as it is already known.  Define a new basis \{$\sigma^1 = \omega^1,\, \sigma^2 = \frac1A\omega^2$\} and therefore:
$$d\sigma^1 = 0,\quad d\sigma^2 = \sigma^{1\bar1}.$$
Finally, we can conclude that there exist only two 1-abelian complex structures on NLAs of dimension 4:
$$d\sigma^1 = 0,\quad d\sigma^2 = \varepsilon\,\sigma^{1\bar1},\quad \varepsilon =\{0, 1\}.$$

%%%%%%%%%%%%%%%%%%%%%%%%%%%%%%%%%%%%%%%%%%%%%%%%%%%%%%%%%

\subsection{The 6-dimensional case}\label{sub:6}

Let $(\mathfrak g, J_M)$ be a 1-abelian complex structure on an NLA of dimension 6 given by: 
\begin{equation*}\label{cseq}
    \begin{cases}
            d\omega^1=d\omega^2=0,\\
            d\omega^3= \begin{pmatrix}
        \omega^1 & \omega^2
    \end{pmatrix}
    M
    \begin{pmatrix}
        \omega^{\overline{1}}\\
        \omega^{\overline{2}}
    \end{pmatrix},
    \end{cases}\qquad \text{where}\,\, M\in M_2(\mathbb C).
\end{equation*}

The first classification of general abelian complex structures on Lie algebras of dimension 6 is due to Andrada, Barberis and Dotti in \cite{ABD} and then it was reformulated by Ceballos, Otal, Ugarte and Villacampa in \cite{COUV} for NLAs.  We will recover the classification of those being 1-abelian using this new method.  

\medskip
Let us begin with the case when $M$ is singular with $r= rank(M)=1$.  We need to apply the regularization algorithm (see Section~\ref{sec:reg-alg}) to the matrix $M$.  %In order to obtain a matrix $M^{(iii)}$ of maximal type, we need $m_2 = m_1 = n-rank(M) = 1$.  Since $2r = n$, 
According to Remark~\ref{rmk:singular-m}, since $2r = n$, $M_{(1)}$ does not exist, i.e, $M$ is completely singular and 
$$M^{(iii)}=\begin{pmatrix}
0&1\\0&0
\end{pmatrix} = J_2(0)$$ and there only exists one complex structure of this type:

\begin{theorem}\label{6-singular}
    Let $(\mathfrak{g}, J_M)$ be a $6$-dimensional NLA with a $1$-abelian complex structure where $M$ is a singular matrix of rank $1$ and maximal type. Then there exists a basis $\{\sigma^i\}_{i=1}^3$ of $\mathfrak{g}^{1,0}$ such that the complex structure equations are:
        $$J^3_{s1} : \begin{cases}
            d\sigma^1=d\sigma^2=0,\quad  d\sigma^3=\sigma^{1\bar 2}.\\
            %d\sigma^3=\sigma^{1\bar 2}.
        \end{cases}$$
\end{theorem}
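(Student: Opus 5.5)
The plan is to reduce $M$ to $J_2(0)$ by *congruence, using either the regularization algorithm or a direct hands-on argument, and then invoke Lemma~\ref{equivalence} with $c=1$.

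First we pin down the invariants. Since $M\in M_2(\mathbb C)$ has rank $1$, Step 1 of the regularization algorithm gives $m_1 = 2-1 = 1$. The maximal type hypothesis (Definition~\ref{def:maximal-type}) then forces $m_2=m_1=1$. Hence $m_1+m_2=2=n-1$, so we are in Case 1 of the algorithm and $M_{(1)}$ does not exist. By \eqref{m1+m2=n} with $m_2=m_1=1$, $M$ is *congruent to
$$M^{(iv)}=\begin{pmatrix}0&1\\0&0\end{pmatrix}=J_2(0).$$
Equivalently, in the language of Theorem~\ref{class} we have $R$ absent and $S=J_1^{[0]}(0)\oplus J_2^{[1]}(0)=J_2(0)$.

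To make this self-contained, we can argue directly instead of quoting the algorithm. Write $M=uv^*$ with $u,v\in\mathbb C^2$ nonzero. There are two cases.
\begin{itemize}
\item If $u,v$ are linearly dependent, then $M$ is a nonzero multiple of $uu^*$, which is *congruent to a multiple of $\mathrm{diag}(1,0)$. In that case $N=0$ in Step 1, so $m_2=0$, contradicting maximal type.
\item Hence $u,v$ are linearly independent. Choose $P\in GL(2,\mathbb C)$ with $Pu=e_1$ and $Pv=e_2$. Then $PMP^*=(Pu)(Pv)^*=e_1e_2^T=J_2(0)$.
\end{itemize}

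Finally, we pass back to forms. With this $P$ and $c=1$ in Lemma~\ref{equivalence}, the new basis $\sigma$ given by $\omega=\sigma P$ and $\sigma^3=\omega^3$ satisfies $d\sigma^3=\sigma J_2(0)\bar\sigma^T=\sigma^{1\bar2}$, while $d\sigma^1=d\sigma^2=0$. This is exactly $J^3_{s1}$.

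The only delicate point is identifying maximal type with the linear independence of $u$ and $v$, i.e.\ checking $m_2=1$ precisely when $M$ is not a multiple of a rank-one Hermitian matrix. This follows because $N$ in \eqref{step1-regularization} is, up to a nonzero factor, the entry coupling the surviving row to the kernel direction, and that entry vanishes exactly when $v\parallel u$.
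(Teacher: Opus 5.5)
Your proof is correct. The first paragraph is the paper's own argument: maximal type forces $m_1=m_2=1$, so $M_{(1)}$ has size $2r-2=0$, $M$ is completely singular and $M\sim_* J_2(0)$, after which the change of basis with $c=1$ gives $J^3_{s1}$. Your label $M^{(iv)}$ for the reduced matrix is the precise one; the paper writes $M^{(iii)}$, but Step~4 is needed to remove a possibly nonzero $(1,1)$ entry $D$.

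What you add is the rank-one factorization $M=uv^*$, which is a genuinely more elementary route. It shows directly that $N=0$, and hence $m_2=0$, exactly when $u\parallel v$, i.e.\ when $M$ is a multiple of a rank-one Hermitian matrix. It also produces an explicit $P$ with $Pu=e_1$, $Pv=e_2$ and $PMP^*=e_1e_2^T$, without going through the general steps of the regularization algorithm. The paper's route, on the other hand, extends uniformly to the higher-dimensional singular cases (e.g.\ Lemma~\ref{lema-block8-sing}), where no comparably simple factorization is available.
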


Suppose now that $M$ is regular and Hermitian.  It suffices to apply Theorem~\ref{teo:Hermitian} where $r=0$ (since $M$ is regular), $n=3$ and $k=1$ or $2$:

\begin{theorem}\label{6-Hermitian}
       Let $(\mathfrak{g}, J_M)$ be a $6$-dimensional NLA with a $1$-abelian complex structure where $M$ is Hermitian of full rank. Then there exists a basis $\{\sigma^i\}_{i=1}^3$ of $\mathfrak{g}^{1,0}$ such that the complex structure equations are one of the following: 
       $$J_{3,2,0}:\begin{cases}
            d\sigma^1=d\sigma^2=0,\\
            d\sigma^3=\sigma^{1\bar 1}+\sigma^{2\bar 2};
        \end{cases} \quad J_{3,1,0} : \begin{cases}
            d\sigma^1=d\sigma^2=0,\\
            d\sigma^3=\sigma^{1\bar 1}-\sigma^{2\bar 2}.
        \end{cases}$$
    \end{theorem}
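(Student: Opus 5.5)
The statement is Theorem~\ref{6-Hermitian}, so the plan is to specialize Theorem~\ref{teo:Hermitian} to $n=3$ and check that the two resulting structures are genuinely different. Throughout, $M\in M_2(\mathbb C)$ is Hermitian and regular.

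First I apply Sylvester's inertia law (Theorem~\ref{sil}). It gives $P\in GL(2,\mathbb C)$ with $PMP^*=\mathrm{diag}(Id_p,-Id_q)$. Regularity forces $r=0$, so $p+q=2$.

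Next I use Lemma~\ref{equivalence} with $c=1$ to pass to the basis $\sigma=\omega P^{-1}$, $\sigma^3=\omega^3$. In this basis $d\sigma^3=\sigma(PMP^*)\bar\sigma^T$. Concretely, writing $\omega=\sigma P$ gives $\omega M\bar\omega^T=\sigma PMP^*\bar\sigma^T$.

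Then I reduce the possible signatures.
\begin{itemize}
\item The options are $(p,q)\in\{(2,0),(1,1),(0,2)\}$.
\item By the lemma that $(p,q,r)\sim(q,p,r)$, the case $(0,2)$ is equivalent to $(2,0)$. Concretely, rescale $\sigma^3\mapsto-\sigma^3$.
\item This leaves $k=p\in\{1,2\}$. This matches the range in Theorem~\ref{teo:Hermitian}: $n-r=3$ is odd, so $k$ runs over $\lfloor 2/2\rfloor,\dots,2$, that is $k=1,2$.
\item These two cases are exactly $J_{3,2,0}$ and $J_{3,1,0}$.
\end{itemize}

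Finally I show $J_{3,2,0}\not\sim J_{3,1,0}$; this is the only step that needs care. Suppose $\mathrm{diag}(1,-1)=c\,P\,I\,P^*$ for some $c\in\mathbb C^*$ and $P\in GL(2,\mathbb C)$.
\begin{itemize}
\item The matrix $PP^*$ is positive definite Hermitian, and the left side is Hermitian and nonzero. So $c$ must be real.
\item If $c$ is real, then $cPP^*$ is definite: positive for $c>0$, negative for $c<0$.
\item But $\mathrm{diag}(1,-1)$ is indefinite, which is a contradiction.
\end{itemize}
So $|p-q|$ is an invariant of $J_M$ under the rescaling allowed in Lemma~\ref{equivalence}, and the two structures are not equivalent. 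This completes the plan.
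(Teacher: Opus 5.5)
Your proposal is correct and follows the paper's route. The paper simply specializes Theorem~\ref{teo:Hermitian} (Sylvester's law together with the $(p,q,r)\sim(q,p,r)$ rescaling) to $n=3$, $r=0$, $k\in\{1,2\}$, exactly as you do. Your closing non-equivalence argument ($cPP^*$ Hermitian and nonzero forces $c\in\mathbb R$, hence a definite matrix, which cannot equal $\mathrm{diag}(1,-1)$) is correct but is an extra: the statement does not require it, and the paper does not spell it out here.
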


\medskip

Finally, if $M$ is regular but not Hermitian, then $S=0$ in Theorem~\ref{class} and the number of blocks $\Delta_j$ and $H_{2i}(\mu)$ is determined by the similarity type of its *cosquare, $C(M)$. In this case $C(M)\in GL(2,\mathbb C)$. Using the spectral properties of $C(M)$ (Lemma \ref{eigenvalue}), the possibilities for the block decomposition of $M$ are the following: 

\begin{lemma}\label{lema: block-dim6}
Let $(\mathfrak{g}, J_M)$ be a $6$-dimensional NLA with a $1$-abelian complex structure where $M$ is a regular non-Hermitian matrix.  Then, there exists a basis $\{\tau^i\}_{i=1}^3$ of $\mathfrak{g}^{1,0}$ for which matrix $M$ is one of the following, where $\varepsilon, \varepsilon_i = \pm 1$:
\begin{itemize}
\item[(i)] $M= c\widetilde M$, where $|c| = 1$ and $\widetilde M$ is Hermitian.
\item[(ii)] $M=\varepsilon e^{i\nicefrac{\theta}{2}}\Delta_2=\begin{pmatrix}
        & \varepsilon e^{i\frac{\theta}{2}}\\
        \varepsilon e^{i\frac{\theta}{2}} & \varepsilon e^{i\frac{\theta+\pi}{2}}
    \end{pmatrix}$.
\item[(iii)] $M=H_2(\mu_0)=
  \begin{pmatrix}
      & 1\\
     \mu_0 & \\
  \end{pmatrix}.$
  \item[(iv)] $M=\varepsilon_0\,e^{i\nicefrac{\theta_0}{2}}\Delta_1 \oplus \varepsilon_1\,e^{i\nicefrac{\theta_1}{2}}\Delta_1=\left(\begin{array}{c|c}
\varepsilon_0\,e^{i\nicefrac{\theta_0}{2}} &  \\
\hline
 & \varepsilon_1\,e^{i\nicefrac{\theta_1}{2}} \\
\end{array}\right).$
\end{itemize}
\end{lemma}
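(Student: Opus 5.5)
The plan is to reduce the statement to the regular case of Theorem~\ref{class}, where $n-1=2$ and the singular part $S$ is absent, and then list the possible Jordan forms of the *cosquare $C(M)\in GL(2,\mathbb C)$. By Lemma~\ref{cosquaresim} and the algorithm of Section~\ref{section3.1}, the *congruence class of $M$ is determined by the Jordan form of $C(M)$ together with signs $\varepsilon_j$. Moreover, *congruent matrices give equivalent structures, since by Lemma~\ref{equivalence} we may take $c=1$. It therefore suffices to enumerate the admissible Jordan forms $J(C(M))$ of size $2$, using the spectral symmetry of Lemma~\ref{eigenvalue}, and write down the block $R$ assigned to each.

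We split according to the eigenvalues of $C(M)$. Suppose first that some eigenvalue $\mu$ has $|\mu|\neq 1$. By Lemma~\ref{eigenvalue}, $\bar\mu^{-1}$ is then also an eigenvalue, and it is distinct from $\mu$. Since the size is $2$, these are all the eigenvalues and $C(M)\sim_{sim} J_1(\mu_0)\oplus J_1(\bar\mu_0^{-1})$, where $\mu_0$ is the one with $|\mu_0|>1$. Lemma~\ref{simil-cosquare}(i), together with step (c) of the algorithm and Lemma~\ref{simil-cosquare}(ii) to absorb the sign, gives $M\sim_* H_2(\mu_0)$, which is case (iii).

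Now suppose all eigenvalues have modulus $1$. There are three subcases.
\begin{itemize}
\item[(a)] The matrix $C(M)$ is a single Jordan block $J_2(e^{i\theta})$. Lemma~\ref{simil-cosquare}(iv) and Theorem~\ref{lemma5} give $M\sim_*\varepsilon e^{i\theta/2}\Delta_2$, which is case (ii). Writing out $\Delta_2$ produces the displayed matrix, using $e^{i\theta/2}\cdot i=e^{i(\theta+\pi)/2}$.
\item[(b)] The matrix $C(M)$ is diagonalizable with two distinct eigenvalues $e^{i\theta_0}\neq e^{i\theta_1}$. Lemma~\ref{simil-cosquare}(iv) applied to each $\Delta_1$ gives case (iv).
\item[(c)] The matrix $C(M)$ is diagonalizable with a single eigenvalue. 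Then Remark~\ref{hermHS} and Proposition~\ref{prop:C(M)-Hermitian} show that $M=e^{i\theta/2}\widetilde M$ with $\widetilde M$ Hermitian, which is case (i). Note that $M$ itself is not Hermitian by hypothesis, so $c\neq\pm1$, although this does not affect the form of the statement.
\end{itemize}

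The main obstacle is the sign bookkeeping in Theorem~\ref{lemma5}. That theorem only says that $M$ is *congruent to some $(-D_-)\oplus D_+$, and one must check that $D_\pm$ can be taken to be direct sums of the canonical blocks of $B$, each carrying its own sign. I would justify this through the uniqueness in Theorem~\ref{HS}, citing \cite{HS1} for the fact that the canonical form consists of blocks $\pm e^{i\phi/2}\Delta_n$ and $H_{2n}(\mu)$. This is exactly what the algorithm in step (c) asserts, so I will invoke it directly.

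Finally, the basis $\{\tau^i\}$ is obtained from the original basis by the matrix $P$ realizing the *congruence, with $\tau^3=\omega^3$, as in conditions (i)–(iii) before Lemma~\ref{equivalence}. This makes the exhaustion of the Jordan types of $2\times2$ matrices complete.
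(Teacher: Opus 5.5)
Your proposal is correct and follows essentially the paper's proof. Both run a case analysis on the Jordan form of $C(M)\in GL(2,\mathbb C)$, using Lemma~\ref{eigenvalue} for the spectral symmetry, Remark~\ref{hermHS} for the diagonalizable single-eigenvalue case, and Lemma~\ref{simil-cosquare} with the regular-case algorithm for the other three cases; you only order the cases by modulus first instead of by multiplicity, and you make explicit the sign bookkeeping that the paper's step (c) merely asserts, which really follows from the existence part of Theorem~\ref{HS} together with $C(\lambda\Delta_n)=\lambda^2C(\Delta_n)$ for $|\lambda|=1$ and Lemma~\ref{cosquaresim}.
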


\begin{proof}
It suffices to analyze which are the possible different eigenvalues of $C(M)$, their modulus and the dimension of their corresponding eigenspaces.  As notation, eigenvalues will be denoted as $\mu_i$ and $W_i$ will refer to the corresponding eigenspaces.  In the case of values of modulus 1, we will write $\mu_j = e^{i\theta_j}$.

Let us start with the case where there is only one eigenvalue $\mu_0$ of multiplicity 2, $p_{C(M)}(z)=(z-\mu_0)^2$.  By Lemma \ref{eigenvalue}, $|\mu_0|=1$.
Remark~\ref{hermHS} gives directly case (i) and allows us to consider the case 
when $dim(W_0)=1$. Then 
        \begin{equation*}\label{J2J1}
            C(M) \sim J_2(\mu_0)=\begin{pmatrix}
                \mu_0&1\\
                &\mu_0
            \end{pmatrix},
        \end{equation*}
and therefore $M=\varepsilon e^{i\nicefrac{\theta_0}{2}}\Delta_2$, obtaining case (ii) (see Lemma~\ref{simil-cosquare} and the algorithm for the regular case).
 
 The other possibility if the case when $C(M)$ has two distinct eigenvalues $\mu_0, \mu_1$ each of them of multiplicity one. Then $p_{C(M)}(z)=(z-\mu_0)(z-\mu_1)$ and $C(M) \sim Diag(\mu_0, \mu_1)$. However, regarding the modulus of the eigenvalues, two situations can happen: on the one hand, if one of them, let say $\mu_0$, has modulus not equal to 1, then $\nicefrac{1}{\overline{\mu_0}}$ must be also an eigenvalue, so $Sp(C(M))=\{\mu_0, \nicefrac{1}{\overline{\mu_0}}\}$ and 
 $M=H_2(\mu_0)$ (case (iii)).
 
 The missing situation is when both of the eigenvalues have modulus 1, which will provide case (iv).  Now, $M=\varepsilon_0\,e^{i\nicefrac{\theta_0}{2}}\Delta_1 \oplus \varepsilon_1\,e^{i\nicefrac{\theta_1}{2}}\Delta_1$.
\end{proof}

Using the previous lemma and rescaling the last element of the basis of $\mathfrak g^{1,0}$, we obtain the remaining classification in dimension 6: 

\begin{theorem}\label{classHS}
    Let $(\mathfrak{g}, J_M)$ be a $6$-dimensional NLA with a $1$-abelian complex structure where $M$ is a regular non-Hermitian matrix.  Then, there exists a basis $\{\sigma^i\}_{i=1}^3$ of $\mathfrak{g}^{1,0}$ such that the complex structure equations are one of the following:
\begin{itemize}
    \item[(a)]$J^3_{r1}:\quad d\sigma^1=d\sigma^2=0$, $d\sigma^3=\sigma^{1\bar 2}+\sigma^{2\bar 1}+i\sigma^{2\bar 2}$;\\[-5pt]
    \item[(b)] $J^3_{r2}:\quad d\sigma^1=d\sigma^2=0$, $d\sigma^3=\sigma^{1\bar 2}+\mu\,\sigma^{2\bar 1}$ for $\mu \in \mathbb{R}$, $1<\mu<+\infty$;\\[-5pt]
    \item[(c)] $J^3_{r3}:\quad d\sigma^1=d\sigma^2=0$, $d\sigma^3=\sigma^{1\bar 1}+\lambda\,\sigma^{2\bar 2}$ for $\lambda=e^{i\theta}$, $\theta \in (0, \pi).$
\end{itemize}
\end{theorem}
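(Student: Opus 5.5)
We start from Lemma~\ref{lema: block-dim6} and treat its cases (ii)--(iv) one at a time; case (i) is excluded because, by Corollary~\ref{rmk:multiple-Hermitian}, a unimodular multiple of a Hermitian matrix gives a structure equivalent to some $J_{3,k,0}$ of Theorem~\ref{6-Hermitian}. Throughout we use Lemma~\ref{equivalence}: $J_M\sim J_{cPMP^*}$ for $c\in\mathbb C^*$ and $P\in GL(2,\mathbb C)$. So in each case we look for a rescaling $c$ and a change $P$ that brings $M$ to the displayed normal form. Afterwards we check, using the invariants of $C(M)$ up to the factor $\bar c/c$, that the listed structures are pairwise non-equivalent.

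\textbf{Case (ii).} Take $c=\varepsilon e^{-i\theta/2}$, which sends $M=\varepsilon e^{i\theta/2}\Delta_2$ to $\Delta_2=\begin{pmatrix}0&1\\1&i\end{pmatrix}$. With the ordering $(\omega^1,\omega^2)$ this is exactly $d\sigma^3=\sigma^{1\bar2}+\sigma^{2\bar1}+i\sigma^{2\bar2}$, which is $J^3_{r1}$.

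\textbf{Case (iii).} Here $M=H_2(\mu_0)$ with $|\mu_0|>1$. Write $\mu_0=|\mu_0|e^{i\alpha}$ and apply $c=e^{-i\alpha/2}$ together with $P=\mathrm{diag}(e^{i\alpha/4},e^{i\alpha/4})$. This choice makes both off-diagonal entries $e^{-i\alpha/2}$ times the original ones, which leaves $1$ and $\mu_0$ with the same argument. To fix this, we instead use $P=\mathrm{diag}(a,b)$, which sends the entries $(1,\mu_0)$ to $(a\bar b,\, b\bar a\mu_0)$. Choosing $a\bar b=e^{i\alpha/2}$ and multiplying by $c=e^{-i\alpha/2}$ produces the entries $1$ and $|\mu_0|$, so we set $\mu=|\mu_0|>1$. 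This reproduces the example given in the paper before the statement.

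\textbf{Case (iv).} Here $M=\mathrm{diag}(\varepsilon_0e^{i\theta_0/2},\varepsilon_1e^{i\theta_1/2})$. First multiply by $c=(\varepsilon_0e^{i\theta_0/2})^{-1}$ to reach $\mathrm{diag}(1,\lambda)$ with $|\lambda|=1$. If $\lambda=\pm1$ the matrix is Hermitian, so we may assume $\lambda\notin\mathbb R$. If $\operatorname{Im}\lambda<0$, we swap the basis vectors and multiply by $\bar\lambda$, obtaining $\mathrm{diag}(1,\bar\lambda)$. This gives $\theta\in(0,\pi)$ and hence $J^3_{r3}$.

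\textbf{Non-equivalence and the main obstacle.} We must show that these parameter ranges do not overlap and that distinct parameters give inequivalent structures. This step is the delicate one. Rescaling by $c$ multiplies $\operatorname{Sp}(C(M))$ by the unimodular number $c/\bar c$, so the following quantities are equivalence invariants: the Jordan type of $C(M)$, the moduli of its eigenvalues, and the ratio of the two eigenvalues. These invariants separate the three families as follows.

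\begin{itemize}
\item For $J^3_{r1}$, $C(M)$ is a single non-diagonalizable Jordan block.
\item For $J^3_{r2}$, the eigenvalues are $\mu$ and $\mu^{-1}$, so $\mu>1$ is recovered from their moduli.
\item For $J^3_{r3}$, the eigenvalues are $1$ and $\lambda^2$, so their ratio is $\lambda^{2}$ up to inversion. Inversion corresponds to the swap $\theta\mapsto-\theta$, and $\theta\in(0,\pi)$ then determines $\theta$ uniquely.
\end{itemize}
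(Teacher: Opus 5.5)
Your reduction of cases (ii)--(iv) of Lemma~\ref{lema: block-dim6} is correct and is essentially the paper's proof. You exclude case (i) the same way. You use the same rescaling $c=\varepsilon e^{-i\theta/2}$ for (a). Your diagonal change with $a\bar b=e^{i\alpha/2}$ for (b) is the paper's $\tau^1=e^{i\alpha/2}\sigma^1$, $\tau^3=e^{i\alpha/2}\sigma^3$. You use the same swap-and-rescale $\lambda\mapsto\lambda^{-1}=\bar\lambda$ for (c). Two small clean-ups: delete the aborted first attempt in case (iii), and note that in case (iv) $\lambda=\pm1$ cannot occur, since $e^{i\theta_0}\neq e^{i\theta_1}$.

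Your final paragraph goes beyond the statement. The statement only asserts existence of such a basis, and the paper's proof establishes nothing more. Its last item is also wrong. The eigenvalues of $C(\mathrm{diag}(1,\lambda))$ are $1$ and $\lambda^2$, so only $\lambda^2$ is visible and $\theta$ is known only modulo $\pi$. The inversion $\theta\mapsto-\theta$ therefore lands back in $(0,\pi)$ as $\theta\mapsto\pi-\theta$. Concretely, take $\lambda=e^{i\theta}$ and $\lambda'=-\bar\lambda=e^{i(\pi-\theta)}$, both in the allowed range. Then $C(\mathrm{diag}(1,\lambda'))=\mathrm{diag}(1,\bar\lambda^{2})=\bar\lambda^{2}\,\mathrm{diag}(\lambda^2,1)$, so all of your cosquare invariants coincide.

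The two structures are nevertheless inequivalent for $\theta\neq\pi/2$, but the cosquare cannot detect this. One fix is the sign information in the uniqueness part of Theorem~\ref{HS}: the Type I coefficient multisets $\{u,u\lambda\}$ and $\{1,-\bar\lambda\}$, with $|u|=1$, agree only if $\lambda=\pm i$. Another is an elementary invariant: the opening angle $\theta$ of the cone $\{x^*Mx\}=\{|x_1|^2+\lambda|x_2|^2\}$, which the map $M\mapsto cPMP^*$ only rotates and rescales. Either drop the non-equivalence claim or repair it in one of these ways.
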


\begin{proof}
Consider a generic complex structure equations in dimension 6 given by \eqref{ouracs}, where the matrix $M$ and the basis $\{\tau^i\}_{i=1}^3$ is determined by Lemma~\ref{lema: block-dim6}.  For case (i) in the previous lemma, we are reduced to the Hermitian case, so it is already studied in Theorem~\ref{6-Hermitian}. To obtain equations (a), consider 
$$\tau^i=\sigma^i, \,\, i=1,2,\quad \tau^3=\varepsilon\, e^{i\nicefrac{\theta}{2}} \sigma^3.$$ 
applied to case (ii). Case (b) follows directly from case (iii) considering the change $$\tau^1=e^{i\frac{Arg(\mu_0)}{2}}\sigma^1, \quad \tau^2=\sigma^2, \quad \tau^3=e^{i\frac{Arg(\mu_0)}{2}}\sigma^3$$ where $\mu = |\mu_0|$.  Finally, in order to obtain equations in case (c), note that considering the following change of basis starting from (iv) in Lemma~\ref{lema: block-dim6},
$$\tau^1=\eta^1, \quad \tau^2=\eta^2, \quad \tau^3=\varepsilon_0\,e^{i\nicefrac{\theta_0}{2}}\,\eta^3,$$ we obtain (c) with,  $\lambda=\dfrac{\varepsilon_1e^{i\nicefrac{\theta_1}{2}}}{\varepsilon_0e^{i\nicefrac{\theta_0}{2}}}=\pm e^{i\frac{\theta_1-\theta_0}{2}}$. Nevertheless, if we apply the change $$\eta^1=i\sigma^2, \quad \eta^2=\sigma^1, \quad \eta^3=\lambda\sigma^3,$$
we arrive again to (c), but in this case with coefficient $\nicefrac{1}{\lambda}$. So, in (c) we can consider that $\arg \lambda\in (0,\pi)$.
\end{proof}

\bigskip

Next, we identify the real 6-dimensional NLAs that admit 1-abelian complex structure $J_M$ according to the classification obtained in Theorems~\ref{6-singular}, ~\ref{6-Hermitian} and~\ref{classHS}. 

\begin{theorem}\label{main-theorem-6}
Let $(\mathfrak{g}, J_M)$ be a $6$-dimensional NLA with a $1$-abelian complex structure.  Then $(\mathfrak{g}, J_M)$ is isomorphic to:
\begin{itemize}
\item Singular of maximal type case: $(\mathfrak{h}_5, J^3_{s1})$;\\[-5pt]
\item Regular Hermitian case: $(\mathfrak{h}_3, J_{3,2,0})$; $(\mathfrak{h}_3, J_{3,1,0})$; \\[-5pt]
\item Regular and non-Hermitian case: $(\mathfrak{h}_4, J^3_{r1})$; $(\mathfrak{h}_5, J^3_{r2})$; $(\mathfrak{h}_2, J^3_{r3})$,
\end{itemize}
where we have used Salamon's notation~\cite{Salamon} for real nilpotent Lie algebras of dimension~6. 
\end{theorem}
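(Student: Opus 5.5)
The plan is to go through the six normal forms one at a time. For each complex structure obtained in Theorems~\ref{6-singular}, \ref{6-Hermitian} and \ref{classHS}, I will write down an explicit real basis $\{e^1,\dots,e^6\}$ of $\mathfrak g^*$ from the $(1,0)$-forms $\sigma^1,\sigma^2,\sigma^3$. The closed forms $\sigma^1,\sigma^2$ give four closed real $1$-forms, namely their real and imaginary parts. I will set $\sigma^3=e^5-i\,e^6/2$, or a suitable complex multiple of it. Then the real and imaginary parts of $d\sigma^3$ give $de^5$ and $de^6$, and by Proposition~\ref{prop:topo} the algebra is $2$-step with $b_1\ge 4$.

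The task then reduces to computing the real pair of $2$-forms $(de^5,de^6)$ in $\Lambda^2\langle e^1,\dots,e^4\rangle$. We then identify the resulting algebra among $\mathfrak h_2,\dots,\mathfrak h_5,\mathfrak h_8$ in Salamon's list, using the invariants already recorded there. These invariants are $b_1$, the dimension of the derived algebra, and the rank (degeneracy) type of the pencil $\{a\,de^5+b\,de^6\}$ of $2$-forms on $\mathbb R^4$.

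The Hermitian cases are immediate. By the theorem identifying the underlying Lie algebras with $r=0$, both $J_{3,2,0}$ and $J_{3,1,0}$ live on $\mathfrak h_{3,0}=\mathfrak h_2\oplus\mathfrak a_1$ in Heisenberg notation. This is $\mathfrak h_3=(0,0,0,0,0,12+34)$ in Salamon's notation.

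For $J^3_{s1}$, write $\sigma^{1\bar2}=(e^1+ie^2)\wedge(e^3-ie^4)$. The real part is $e^{13}+e^{24}$ and the imaginary part is $e^{23}-e^{14}$. This gives $(0,0,0,0,13+24,14-23)$, which is $\mathfrak h_5$.

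For $J^3_{r2}$, I compute the real and imaginary parts of $\sigma^{1\bar2}+\mu\sigma^{2\bar1}$. They are $(1+\mu)(e^{13}+e^{24})$ and $(1-\mu)(e^{23}-e^{14})$. Since $\mu>1$, both coefficients are nonzero, so rescaling gives the same algebra $\mathfrak h_5$.

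For $J^3_{r3}$ with $\lambda=e^{i\theta}$, $\theta\in(0,\pi)$, we get $d\sigma^3=\sigma^{1\bar1}+\lambda\sigma^{2\bar2}$, where $\sigma^{j\bar j}$ is $-2i$ times a real symplectic form on the $j$-th plane. The real and imaginary parts are $\sin\theta\, e^{34}$ and a combination of $e^{12}$ and $\cos\theta\, e^{34}$. Since $\sin\theta\ne0$, a real change of basis among $e^5,e^6$ yields $(0,0,0,0,12,34)$, which is $\mathfrak h_2$.

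For $J^3_{r1}$, namely $\sigma^{1\bar2}+\sigma^{2\bar1}+i\sigma^{2\bar2}$, we have $\sigma^{1\bar2}+\sigma^{2\bar1}=2(e^{13}+e^{24})$, which is real, and $i\sigma^{2\bar2}=2e^{34}$. So both terms are real, and I must choose the basis so the imaginary part is nonzero. For that I will take a complex multiple of $\sigma^3$, or mix $e^5,e^6$, to get two independent real forms. One part spans $e^{13}+e^{24}+e^{34}$ and the other is a combination like $e^{14}-e^{23}$.

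The main obstacle is exactly this identification. We must distinguish $\mathfrak h_4=(0,0,0,0,12,14+23)$ from $\mathfrak h_5$ and $\mathfrak h_2$. I will use the discriminant of the quadratic form $\alpha\mapsto \alpha\wedge\alpha$ on the $2$-dimensional pencil, viewed as a conic in $\mathbb P^1$. Two real degenerate members give $\mathfrak h_2$, one double degenerate member gives $\mathfrak h_4$, and no real degenerate member gives $\mathfrak h_5$.

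So in each non-Hermitian case I will compute $\det$ of $(a\,\Re+b\,\Im)^{\wedge2}$ in $(a,b)$. The expected results are: a double root for $J^3_{r1}$, coming from the nilpotent $J_2$ *cosquare; two distinct roots for $J^3_{r3}$; and no real roots for $J^3_{s1}$ and $J^3_{r2}$. These give $\mathfrak h_4$, $\mathfrak h_2$, and $\mathfrak h_5$ respectively. As a cross-check on the Hermitian case, the pencil containing a nondegenerate member with $b_1=5$ gives $\mathfrak h_3$.
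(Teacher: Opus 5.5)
Your plan is the paper's plan: take real and imaginary parts of $d\sigma^3$ in a real basis adapted to $\sigma^1,\sigma^2$. You add a Pfaffian test on the pencil to recognize the algebras, and this works for $J^3_{s1}$, $J^3_{r3}$ and the Hermitian structures. The case $J^3_{r1}$, however, fails as written.

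\textbf{The error.} Because $\overline{\sigma^{1\bar 2}}=-\sigma^{2\bar 1}$, the form $\sigma^{1\bar 2}+\sigma^{2\bar 1}$ is purely imaginary. In your conventions it equals $2i(e^{23}-e^{14})$, not the real form $2(e^{13}+e^{24})$. In general the Hermitian part of $M$ contributes to $\mathrm{Im}\,d\sigma^3$ and the anti-Hermitian part to $\mathrm{Re}\,d\sigma^3$. The same slip swaps the coefficients in your $J^3_{r2}$ computation: the real part is $(1-\mu)(e^{13}+e^{24})$ and the imaginary part is $(1+\mu)(e^{23}-e^{14})$. That swap is harmless there, since $\mu>1$.

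\textbf{Why your remedy cannot work.} The space $\mathrm{span}_{\mathbb R}\{\mathrm{Re}\,d\sigma^3,\mathrm{Im}\,d\sigma^3\}$ equals $d(\mathfrak g^*)$, whose dimension is $\dim[\mathfrak g,\mathfrak g]$. So no complex rescaling of $\sigma^3$ and no mixing of $e^5,e^6$ can produce two independent forms from a real $d\sigma^3$. Had your computation been right, the algebra would be $(0,0,0,0,0,13+24+34)\cong\mathfrak h_3$. The pencil you then guess, $\langle e^{13}+e^{24}+e^{34},\,e^{14}-e^{23}\rangle$, would also defeat your own test. Its Pfaffian is $-2(a^2+b^2)\,e^{1234}$, which is definite, so your criterion would give $\mathfrak h_5$ rather than the double root you announce.

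\textbf{The fix.} The correct computation is $d\sigma^3=2e^{34}+2i(e^{23}-e^{14})$. So the pencil is $\langle e^{34},\,e^{14}-e^{23}\rangle$, and $\bigl(a\,e^{34}+b(e^{14}-e^{23})\bigr)^{\wedge 2}=-2b^2e^{1234}$ has a double root. The substitution $f^1=e^3$, $f^2=e^4$, $f^3=-e^1$, $f^4=e^2$ turns the pencil into that of $(0,0,0,0,12,14+23)=\mathfrak h_4$. This agrees with the real basis listed in Table~\ref{Tabla-dim6-reducida}.

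\textbf{Comparison with the paper.} Once this is repaired, your argument differs from the paper's only in how the algebras are recognized. The paper exhibits, for each $J_M$, a real basis in which Salamon's normal form appears. You instead invoke the classification of $2$-dimensional pencils of $2$-forms on $\mathbb R^4$ by the real type of the Pfaffian form. That gives a basis-free check and makes the non-isomorphism of the listed algebras transparent, but it relies on quoting that classification. You should also state the criterion only for a nonzero Pfaffian form: if it vanishes identically, every member is degenerate and one gets $\mathfrak h_6$. That case does not occur here.
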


\begin{proof}
It suffices to define a real basis $\{e^1,\ldots, e^6\}$ taking appropriate real and imaginary parts of the complex basis $\{\sigma^1,\sigma^2, \sigma^3\}$ for each complex equations~$J_M$.  The results appear in Table~\ref{Tabla-dim6-reducida}, where we have added the singular Hermitian case for completeness.
\end{proof}

\renewcommand{\arraystretch}{1.5}
\begin{table}[h!]
\begin{tabular}{|c|c|l|c|}
\hline
$J_M$&*congruence blocks&Real basis&NLA\\
\hline
$0$&& &Abelian\\ 
\hline
&$\Delta_1 \oplus J_1(0)$&$\sigma^1=e^1+ie^2,\quad \sigma^2=e^3+ie^4$&\\
$J_{3,1,1}$&Hermitian singular&$\sigma^3=e^5-2ie^6$&$\frak{h}_8$ \\ 
%&&$\omega^3=e^5-2ie^6$&\\
\hline \hline
&&$\sigma^1=e^1+ie^2,\quad \sigma^2=e^3-ie^4$& \\
$J^3_{s1}$&$J_2(0)$&$\sigma^3=e^5+ie^6$&$\frak{h}_5$ \\
%&&$\omega^3=e^5+ie^6$& \\
\hline \hline
&&$\sigma^1=e^1\pm ie^2,\quad \sigma^2=e^3+ie^4$&\\
$J_{3,2,0},\, J_{3,1,0}$&Hermitian regular&$\sigma^3=e^5 \mp 2ie^6$&$\frak{h}_3$ \\
%&matrix&$\omega^3=e^5 \mp 2ie^6$&\\
\hline \hline
%&&$\omega^1 = e^1 + ie^2,\quad \omega^2 = e^3 + ie^4,$& \\ 
$J^3_{r3}$&$e^{i\theta_1}\Delta_1 \oplus e^{i\theta_2}\Delta_1$&$\sigma^1 = e^1 + ie^2,\quad \sigma^2 = e^3 + ie^4,$&$\frak{h}_2$ \\ 
$\lambda = a+ib$&&$\sigma^3 = 2b e^6 - 2(e^5 + a e^6) i$& \\ 
\hline
%&&$\omega^1 = e^1 + ie^2$& \\ 
$J^3_{r2}$&$H_2(\mu)$&$\sigma^1 = e^1 + ie^2,\quad \sigma^2 = e^3 - ie^4$&$\frak{h}_5$ \\ 
$|\mu|>1$&&$\sigma^3 = \frac{e^5}{1-\mu} + i\frac{e^6}{1+\mu}$& \\ 
\hline
%&&$\omega^1 = e^3 - ie^4$,& \\ 
$J^3_{r1}$&$e^{i\theta}\Delta_2$&$\sigma^1 = e^3 - ie^4,\quad \sigma^2 = e^1 + ie^2$,&$\frak{h}_4$ \\ 
&&$\sigma^3 = 2e^5 + 2i\,e^6$&\\ 
\hline
\end{tabular}
\caption{Nilpotent Lie algebras of dimension~6 admitting 1-abelian complex structures.}\label{Tabla-dim6-reducida}
\end{table}

%The identification appears in Table \ref{Tabla-dim6}, where we have given explicitly the relation between the complex and the real basis for each Lie algebra.  We have used Salamon's notation for real nilpotent Lie algebras of dimension 6 \cite{Salamon}, where:
%\begin{eqnarray*}
%\mathfrak{h_2} = (0,0,0,0,12, 34),&\quad &\mathfrak{h_5} = (0,0,0,0,13+42, 14+23),\\
%\mathfrak{h_3} = (0,0,0,0,0, 12+34),&\quad& \mathfrak{h_8} = (0,0,0,0,0, 12),\\
%\mathfrak{h_4} = (0,0,0,0,12,14+23),&\quad&
%\end{eqnarray*}
%and $(0,0,0,0,13+42, 14+23)$ means that there exists a real basis of $1$-forms, $\{e^1,\ldots, e^6\}$ such that $de^1 = de^2=de^3=de^4 = 0$, $de^5 = e^1\wedge e^3 + e^4\wedge e^2$ and $de^6 = e^1\wedge e^4 + e^2\wedge e^3$.

%%%%%%%%%%%%%%%%%%%%%%%%%%%%%%%%%%%%%%%%%%

%For case $(iv)$, take the following real basis:
%$$\omega^1 = e^1 + ie^2,\quad \omega^2 = e^3 + ie^4,\quad \omega^3 = 2b e^6 - 2(e^5 + a e^6) i,\quad \text{where \, } \mu = a+ib.$$

%For case $(v)$, take the following real basis:
%$$\omega^1 = e^1 + ie^2,\quad \omega^2 = e^3 + ie^4,\quad \omega^3 = \frac{e^5}{1-\mu} + i\frac{e^6}{1+\mu}.$$

%For case $(vi)$, take the following real basis:
%$$\omega^1 = e^3 - ie^4,\quad \omega^2 = e^1 + ie^2,\quad \omega^3 = 2e^5 + 2i\,e^6.$$

It is worth noticing that in each orbit determined in Theorem~\ref{classHS} only one Lie algebra appears, in sharp contrast to other classification approaches (see for instance \cite{ABD, COUV}). In the following example, we illustrate how our method can be used to classify a specific 1-abelian complex structure in dimension six and identify its underlying Lie algebra.

\begin{example} Consider the complex 1-abelian structure $J_M$ determined by the equations:
\begin{equation*}\label{t}
    d\omega^1 = d\omega^2 = 0,\quad d\omega^3 = \omega^{1\bar1} -\frac{i}{t}\omega^{1\bar2} -\frac{i}{t}\omega^{2\bar1}  -\omega^{2\bar2},\quad t\in(0,1].
\end{equation*}
According to \cite[Theorem 3.5]{ABD}, (see also \cite[page 13]{COUV}), it corresponds to a family of abelian complex structures of $\mathfrak{h}_5$.  Let us recover this result by applying our method for classification.  First of all, the associated matrix to the structure $J_M$ is simply: $$M=\begin{pmatrix}
    1 & -\nicefrac{i}{t}\\
    -\nicefrac{i}{t} & -1
\end{pmatrix}.$$
Observe that $|M| = 0$ if and only if $t=1$.  So, we distinguish two cases:
if $t=1$, then $M$ is a singular matrix of rank $(M) = 1$.  Applying Steps 1 and 2 of the regularization algorithm, $m_1=1$ and there exists an elementary matrix $P=\begin{pmatrix}
    1 & 0\\
    i & 1
\end{pmatrix}$ such that  $PMP^*=\begin{pmatrix}
    1 & -2i\\
    0 & 0
\end{pmatrix}$.  In particular $m_2 = 1$ and according to Theorem~\ref{6-singular}, the complex structure $J_M$ is equivalent to $J_{s1}^3$ and the underlying Lie algebra is $\mathfrak h_5$ (see Table~\ref{Tabla-dim6-reducida}).

On the other hand, if $t \in (0,1)$, then, $M$ is a regular matrix that is never Hermitian, so it is necessary to analyze its *cosquare. Direct calculations give: $$(M^*)^{-1}=\frac{t^2}{1-t^2}\begin{pmatrix}
        -1 & \nicefrac{-i}{t}\\
        \nicefrac{-i}{t} & 1
    \end{pmatrix},\quad \text{and }\quad C(M)=(M^*)^{-1}M=\begin{pmatrix}
        -\frac{1+t^2}{1-t^2} & \frac{2it}{1-t^2}\\
        -\frac{2it}{1-t^2} & -\frac{1+t^2}{1-t^2}
    \end{pmatrix}.$$

The next step is to diagonalize $C(M)$ by similarity. The characteristic polynomial is $P_{C(M)}(\lambda)=\lambda^2+2\frac{1+t^2}{1-t^2}\lambda+1$ and its (real) solutions are:
$$\lambda_1 = \frac{-(1-t)^2}{1-t^2},\quad \lambda_2 = \frac{-(1+t)^2}{1-t^2}.$$
Observe that $|\lambda_2|>1$, since $t \in (0,1)$ so, we conclude that (see Lemma~\ref{lema: block-dim6}, (iii)) $$M \sim H_2(\lambda_2) = \begin{pmatrix}
    0 & 1\\
    \lambda_2& 0
\end{pmatrix}.$$
In particular, the complex structure determined by $M$ is equivalent to: 
$$d\sigma^1 = d\sigma^2 = 0,\quad d\sigma^3 = \sigma^{1\bar2} + \lambda_2\,\sigma^{2\bar1}.$$
According to Table \ref{Tabla-dim6-reducida}, the underlying Lie algebra is $\mathfrak{h}_5$.
\end{example}

\bigskip

We finish this section by comparing our classification with the one provided in~\cite{COUV}. We recall their main result:

\begin{theorem}\cite[Corollary 3.1, Table 1]{COUV} 
    Let $(\mathfrak{g}, J)$ be a $6$-dimensional NLA with a $1$-abelian complex structure. Then there is a basis $\{\omega^i\}_{i=1}^3$ for $\mathfrak{g}^{1,0}$ satisfying one (and only one) of the following equations \begin{enumerate}
        \item $d\omega^1=d\omega^2=d\omega^3=0$.\\[-5pt]
        \item $d\omega^1=d\omega^2=0$, $d\omega^3=\omega^{1\bar 1}$, and the Lie algebra is $\mathfrak h_8$.  \\[-5pt]
        \item $d\omega^1=d\omega^2=0$, $d\omega^3=\omega^{1\bar 1}\pm \omega^{2\bar 2}$, and the Lie algebra is $\mathfrak h_3$. \\[-5pt] 
        \item $d\omega^1=d\omega^2=0$, $d\omega^3=\omega^{1\bar 1}+D\omega^{2\bar 2}$, with $D \in \mathbb{C}, \,\mathfrak{Im}D=1$, and the Lie algebra is $\mathfrak h_2$.  \\[-5pt]
        \item $d\omega^1=d\omega^2=0$, $d\omega^3=\omega^{1\bar 1}+\omega^{1\bar 2}+D\omega^{2\bar 2}$, with $D \in [0,\frac14)$, and the Lie algebra is $\mathfrak h_5$.  \\[-5pt]
         \item $d\omega^1=d\omega^2=0$, $d\omega^3=\omega^{1\bar 1}+\omega^{1\bar 2}+\frac14\omega^{2\bar 2}$, and the Lie algebra is $\mathfrak h_4$.
    \end{enumerate}
\end{theorem}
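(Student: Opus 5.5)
The plan is to derive this statement from the classification already obtained in Theorems~\ref{6-singular}, \ref{6-Hermitian} and~\ref{classHS}, together with Lemma~\ref{equivalence}. Each family in the list will be matched with one of our normal forms by computing the *congruence type of its matrix. Write $M_{(k)}$ for the matrix of the $k$-th family.

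\textbf{Singular and Hermitian cases.} Family (1) is $M=0$. Family (2) is $M=\Delta_1\oplus J_1(0)$, which is the singular Hermitian structure $J_{3,1,1}$ of Table~\ref{Tabla-dim6-reducida}, i.e.\ the one not of maximal type. Family (3) is the regular Hermitian case $J_{3,2,0}$, $J_{3,1,0}$ of Theorem~\ref{6-Hermitian}. For family (5) with $D=0$ we have $M=\begin{pmatrix}1&1\\0&0\end{pmatrix}$. Step 1 of the regularization algorithm gives $m_1=1$ and $N=(1)\neq 0$, so $m_2=1$. Hence $J_M$ is of maximal type and $M\sim_* J_2(0)$, which gives $J^3_{s1}$ on $\mathfrak h_5$ by Theorem~\ref{6-singular}.

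\textbf{Regular non-Hermitian cases.} Here I compute *cosquares and use Lemma~\ref{lema: block-dim6}.
\begin{itemize}
\item Family (5) with $D\neq 0$. For $M=\begin{pmatrix}1&1\\0&D\end{pmatrix}$ one gets
$$C(M)=\begin{pmatrix}1&1\\-1/D&(D-1)/D\end{pmatrix},\qquad \det C(M)=1,\qquad \operatorname{tr} C(M)=(2D-1)/D .$$
The discriminant is $(1-4D)/D^2$.
\item For $D\in(0,\tfrac14)$ the eigenvalues are distinct, real and negative, of the form $\mu_0,\mu_0^{-1}$ with $|\mu_0|>1$. So $M\sim_* H_2(\mu_0)$, which is $J^3_{r2}$ with $\mu=|\mu_0|$. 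The map $D\mapsto|\mu_0|$ is a monotone bijection $(0,\tfrac14)\to(1,\infty)$.
\item For $D=\tfrac14$ the eigenvalue $-1$ is double. Since $C(M)\neq -I$, $C(M)\sim J_2(-1)$, so $M\sim_*\pm e^{i\pi/2}\Delta_2$ and we obtain $J^3_{r1}$ on $\mathfrak h_4$.
\item Family (4). Here $M=\mathrm{diag}(1,D)$ and $C(M)=\mathrm{diag}(1,D/\bar D)$. As $\arg D$ ranges over $(0,\pi)$ when $\mathfrak{Im}D=1$, this covers exactly the case $J^3_{r3}$ with $\lambda=D/|D|$, since a positive real rescaling of the second basis form removes $|D|$.
\end{itemize}
The Lie algebras then agree with Table~\ref{Tabla-dim6-reducida}. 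Since Theorems~\ref{6-singular}--\ref{classHS} exhaust all cases, existence of one of the listed normal forms follows.

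\textbf{Uniqueness.} This is the ``only one'' part and is the main obstacle. The key observation is that $J_M\sim J_{M'}$ forces $C(M)\sim_{sim}\frac{c}{\bar c}\,C(M')$, by the proof of the lemma preceding Definition~\ref{def:maximal-type}. Hence the following are invariants of the structure:
\begin{itemize}
\item the Jordan type of $C(M)$;
\item the singular invariants $m_i$;
\item the set of moduli of the eigenvalues of $C(M)$;
\item the ratio of the two unit eigenvalues, up to inversion.
\end{itemize}
The Jordan type and the $m_i$ separate the families from each other. Within family (5) the invariant $|\mu_0|$ separates the parameters $D$. Within family (4) the ratio $D/\bar D$ up to inversion, with $\arg D\in(0,\pi)$, separates the parameters. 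The delicate point is family (3) versus family (4): for (3) the *cosquare is the identity, while for (4) the two unit eigenvalues differ. Finally, the signs in (3) are distinguished by the signature up to a global sign, using Theorem~\ref{sil}.
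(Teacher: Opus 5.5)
Your existence half is correct and is essentially what the paper does. The paper gives no proof of this statement: it is quoted from \cite{COUV}, and the paper only matches the families with its own normal forms via the spectral type of $C(M)$. You are more careful than that remark, since family (5) with $D=0$ is singular of maximal type and gives $J^3_{s1}$, not an $H_2(\mu)$ block. One small omission: Theorems~\ref{6-singular}--\ref{classHS} do not cover rank-one matrices with $m_2=0$. To land in family (2) you need Step~1 of the regularization algorithm ($N=0$ gives $M\sim_*(a)\oplus(0)$ with $a\neq 0$), followed by a rescaling of $\omega^3$.

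The genuine gap is the ``only one'' claim inside family (4). For $M=\mathrm{diag}(1,D)$ the *cosquare is $\mathrm{diag}(1,e^{2i\alpha})$ with $\alpha=\arg D\in(0,\pi)$, so the angle is doubled. Taking the ratio of the two unit eigenvalues up to inversion then identifies $e^{2i\alpha}$ with $e^{-2i\alpha}=e^{2i(\pi-\alpha)}$. Concretely, $D=1+i$ and $D=-1+i$ give ratios $i$ and $-i=i^{-1}$, so none of your invariants distinguishes $D=x+i$ from $D=-x+i$. Restricting to $\arg D\in(0,\pi)$ does not help, because $2\arg D$ then sweeps all of $(0,2\pi)$. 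These structures are nevertheless non-equivalent, so your argument does not establish the statement.

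What $C(M)$ forgets is the sign information of the Type~I blocks; this is the same phenomenon as $M=(i)$, $M'=(-i)$ in Section~\ref{sec:3}. The fix is to use the uniqueness part of Theorem~\ref{HS} instead of the *cosquare. After a positive diagonal rescaling, $c\,\mathrm{diag}(1,D)\sim_* u\Delta_1\oplus u e^{i\alpha}\Delta_1$ with $u=c/|c|$. Hence $J_{\mathrm{diag}(1,D)}\sim J_{\mathrm{diag}(1,D')}$ forces $\{u,ue^{i\alpha}\}=\{1,e^{i\alpha'}\}$, so either $e^{i\alpha'}=e^{i\alpha}$ or $e^{i\alpha'}=e^{-i\alpha}$. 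The second is impossible for $\alpha,\alpha'\in(0,\pi)$.

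An elementary alternative also works. The set $\{x^*Mx : x\in\mathbb C^2\}$ is unchanged by *congruence, and multiplying $M$ by $c$ only rotates and scales it. For $M=\mathrm{diag}(1,D)$ this set is the convex cone spanned by $1$ and $D$, so its opening angle $\arg D$ is an invariant of $J_M$.
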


It is straightforward to observe that case (1) is the abelian situation, case (2) is singular of not maximal type and case (3) is the Hermitian one.  Moreover, for cases (4), (5) and (6) in the previous theorem, the eigenvalues of the corresponding $C(M)$ are distinct of modulus 1 (for case (4)) distinct with modulus different from 1 (for case (5)) and a unique eigenvalue (for case (6)), which agrees with our previous results. 

\medskip

In order to properly compare  both approaches, let us consider a 1-abelian complex structure $J_M$ defined by a (non-Hermitian) matrix 
\begin{equation}\label{matrix-M-dim6}
M = \begin{pmatrix}
1&\lambda\\ 0&D
\end{pmatrix},
\end{equation}
as it is done in~\cite{COUV}.  Let us study the behavior of the spectral properties of $C(M)$ depending on the values of the pair $(\lambda, D)$.  Direct computations show that 
\begin{equation}\label{cosquare-dim6}
C(M)=\begin{pmatrix}
        1 & \lambda\\   
        -\dfrac{\lambda}{\bar D} & \dfrac{D-\lambda^2}{\bar D}
    \end{pmatrix}.
    \end{equation}
From the explicit form of $C(M)$, we can deduce the similarity class of the *cosquare. This is explained in the following proposition:

\begin{proposition}\label{sim}
Let $M$ be a non-singular matrix defined as in \eqref{matrix-M-dim6} with $D=x+iy \neq 0,\, \lambda\in\mathbb R$ and $C(M)$ its *cosquare \eqref{cosquare-dim6}. Then:
\begin{itemize}
\item[(i)] $C(M)$ has an eigenvalue of algebraic multiplicity 2 if and only if $y^2 = \lambda^2\left(\frac14 \lambda^2 - x\right])$.\\[-5pt]
\item[(ii)] $C(M)$ has two different eigenvalues $z_1$ and $z_2$ with $|z_i|\neq 1$ if and only if $y^2 < \lambda^2\left(\frac14 \lambda^2 - x\right)$.\\[-5pt]
\item[(iii)] $C(M)$ has two different eigenvalues $z_1$ and $z_2$ with $|z_i|=1$ if and only if $y^2 > \lambda^2\left(\frac14 \lambda^2 - x\right)$.
\end{itemize}

\end{proposition}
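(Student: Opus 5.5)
The plan is to reduce the question to a quadratic with real coefficients whose two roots have product $1$. Once that is done, all three cases are decided by the sign of its discriminant.

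\emph{Step 1: trace and determinant.} From \eqref{cosquare-dim6}, $\operatorname{tr} C(M)=1+\frac{D-\lambda^2}{\bar D}$. Since $C(M)=(M^*)^{-1}M$, we get $\det C(M)=\det M/\overline{\det M}=D/\bar D$. Here $\det M = D \neq 0$, which is where non-singularity is used. Multiplying the characteristic equation by $\bar D$, the eigenvalues of $C(M)$ are the roots of
$$\bar D z^2-(2x-\lambda^2)z+D=0,$$
using $D+\bar D=2x$.

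\emph{Step 2: rotate to real coefficients.} Write $D=|D|e^{i\varphi}$ and substitute $z=e^{i\varphi}w$. Then $\bar D e^{2i\varphi}=|D|e^{i\varphi}$, and after dividing by $e^{i\varphi}$ the equation becomes
$$|D|\,w^2-(2x-\lambda^2)\,w+|D|=0.$$
This has real coefficients, because $\lambda\in\mathbb R$, and the product of its roots is $1$. The rotation preserves both moduli and the coincidence or distinctness of roots. So it suffices to study the roots $w$.

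\emph{Step 3: the discriminant.} Its discriminant is
$$\Delta=(2x-\lambda^2)^2-4(x^2+y^2)=\lambda^4-4x\lambda^2-4y^2=4\Bigl(\lambda^2\bigl(\tfrac14\lambda^2-x\bigr)-y^2\Bigr).$$
The three cases follow directly.
\begin{itemize}
\item If $\Delta=0$, there is a double root, which gives (i).
\item If $\Delta>0$, there are two distinct real roots $w$ and $1/w$. These cannot have modulus $1$: that would force $w=1/w=\pm1$, contradicting distinctness. This gives (ii).
\item If $\Delta<0$, the roots are complex conjugates $w$ and $\bar w$, distinct, with $w\bar w=|w|^2=1$. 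Both therefore have modulus $1$, which gives (iii).
\end{itemize}
The converses hold because the three sign cases are exhaustive and mutually exclusive.

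The only step requiring an idea is the rotation $z=e^{i\varphi}w$, which exposes the real structure and the reciprocal symmetry already predicted by Lemma~\ref{eigenvalue}. Everything else is a routine computation.
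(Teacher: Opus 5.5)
Your proof is correct and complete. It turns on the same quantity as the paper, $\widetilde\Delta=\lambda^2(\frac14\lambda^2-x)-y^2$, but reaches the conclusions by a different route.

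The paper stays with the complex quadratic and writes the roots explicitly as $\frac{1}{2\bar D}\bigl(2x-\lambda^2\pm2\sqrt{\widetilde\Delta}\bigr)$. For $\widetilde\Delta>0$ it excludes $|z_1|=|z_2|$, since that would force $\lambda^2=2x$ and hence $\widetilde\Delta\le 0$. For $\widetilde\Delta<0$ it computes $|z_i|=1$ directly. It then proves the converses separately: Lemma~\ref{eigenvalue} and Vieta's formulas give $\lvert\operatorname{tr}C(M)\rvert>2$ for a pair $(z,1/\bar z)$ with $|z|\neq1$ and $\lvert\operatorname{tr}C(M)\rvert<2$ for two distinct unimodular eigenvalues, and these inequalities are translated into the sign of $\widetilde\Delta$.

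Your rotation $z=e^{i\varphi}w$ makes the polynomial real and self-reciprocal. This buys three things. The trichotomy becomes structural: a double root, a real pair $w,1/w$, or a conjugate pair on the unit circle. The converses follow by exhaustion. And the relation $z_2=1/\bar z_1$ in case (ii), which the paper only asserts, is immediate because $w$ is real. The paper's converse argument, in exchange, isolates a trace criterion (comparing $\lvert\operatorname{tr}C\rvert$ with $2$) that is valid for the *cosquare of any regular $2\times2$ matrix, independently of the special form \eqref{matrix-M-dim6}.

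One small point of wording. For the converses, what you actually use is that the three sign cases are exhaustive and that the three conclusions are pairwise incompatible: repeated versus distinct eigenvalues, and modulus $\neq1$ versus $=1$. The mutual exclusivity of the sign cases is not what does the work, though the fact you need is evident.
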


\begin{proof}
The characteristic polynomial $P_{C(M)}(z)$ of $C(M)$ is given by: 
\begin{equation}\label{pol-car}
P_{C(M)}(z)=z^2-\Biggl(1+\frac{D-\lambda^2}{\overline{D}}\Biggl)z+\frac{D}{\overline{D}},
\end{equation}
and its discriminant is $$\Delta=\Biggl(1+\frac{D-\lambda^2}{\overline{D}}\Biggl)^2-\frac{4D}{\overline{D}} =  \frac{4}{(\bar D)^2}\left[\lambda^2\left(\frac14 \lambda^2-x\right) - y^2\right],$$ where we have expressed $D:=x+iy$ with $x,y \in \mathbb{R}$.  
Let us denote 
\begin{equation*}\label{delta-tilde}
    \widetilde \Delta := \lambda^2\left(\frac14 \lambda^2-x\right) - y^2. 
\end{equation*} Observe that $\widetilde \Delta$ is a real number.

With these notations, the solutions of \eqref{pol-car} are given by:
\begin{equation*}\label{solutions-pol-car}
z = \frac{1}{2\bar D} \left[2x-\lambda^2 \pm 2\sqrt{\widetilde\Delta}\right].
\end{equation*}

Clearly, $\widetilde\Delta=0$ is equivalent of having an eigenvalue of algebraic multiplicity 2 and case (i) is completely finished.

Suppose now that $\widetilde\Delta>0$.  Then, $\sqrt{\widetilde\Delta} : =\alpha\in\mathbb R\setminus\{0\}$, and the two (different) eigenvalues are:
$$z_1 = \frac{2x-\lambda^2+2\alpha}{2\bar D},\quad z_2 =\frac{2x-\lambda^2-2\alpha}{2\bar D}.$$  Observe that $|z_1|=|z_2|$ if and only if $\alpha=0$ (not possible) or $\lambda^2 = 2x$.  In this last case, $\widetilde\Delta = -\frac14 \lambda^4-y^2\leq 0$, which is a contradiction.  Therefore, $|z_i|\neq 1$ and we are in case (ii).  Moreover, it can be proved that $z_2 = \nicefrac{1}{\bar z_1}$.

Finally, if $\widetilde\Delta<0$, we can express $\sqrt{\widetilde\Delta} : =i \alpha,$ where $\alpha\in\mathbb R\setminus\{0\}$.  Now, the two distinct eigenvalues are:
$$z_1 = \frac{2x-\lambda^2+2i\alpha}{2\bar D},\quad z_2 = \frac{2x-\lambda^2-2i\alpha}{2\bar D}.$$
A straightforward calculation gives $|z_1| = |z_2| = 1$, so we are in case (iii).

For the converse, suppose first that $P_{C(M)}(z)$ has two different roots, $z_1\neq z_2$.  According to Lemma~\ref{eigenvalue}, two cases can happen: (a) $z_2 = \frac{1}{\overline{z_1}}$ where $|z_i|\neq 1$; or (b) $|z_1| = |z_2| = 1$. 

Using Cardano-Viète formulas: $$\begin{cases}
        z_1+z_2=1+\frac{D-\lambda^2}{\overline{D}},\\
        z_1z_2=\frac{D}{\overline{D}}.
    \end{cases}$$

    Now, in case $(a)$, we have that:
    $$\left|1+\frac{D-\lambda^2}{\overline{D}}\right| = |z_1+z_2| = \left|z_1 + \frac{1}{\overline{z_1}}\right| = \left|\frac{|z_1|^2+1}{\overline{z_1}}\right| = \frac{|z_1|^2+1}{\left|{z_1}\right|} = |z_1| + \frac{1}{|z_1|}>2,$$ where the last inequality is strict since $|z_1|\neq 1$ and holds for any non-zero real number.  The condition $\left|1+\frac{D-\lambda^2}{\overline{D}}\right|>2$ is equivalent to say $y^2 < \lambda^2\left(\frac{1}{4}\lambda^2-x\right)$ (case $(ii)$).

    In case $(b)$, 
    $$\left|1+\frac{D-\lambda^2}{\overline{D}}\right| = |z_1+z_2| < |z_1|+|z_2|=2,$$ and the inequality is strict since $z_1\neq z_2$.  Observe that this condition is equivalent to $y^2 > \lambda^2\left(\frac{1}{4}\lambda^2-x\right)$ (case $(iii)$).
\end{proof}

The previous result combining with Lemma~\ref{lema: block-dim6} provides the link between the two classifications:

%gives us the number of orbits that we can have depending on the value of $D$ and $\lambda$. In other words, Proposition \ref{sim} gives us the number of possible *congruence classes that can be. Next step would be describe this *congruence classes by finding a representative element or each one. Is here where the arguments of section 3.1 make their contribution

\begin{theorem}
    Let $M$ be defined as in~\eqref{matrix-M-dim6} with $D=x+iy \neq 0$. According to the cases listed in Lemma~\ref{lema: block-dim6}:
    \begin{itemize}
    \item[(I)] If $y^2 = \lambda^2(\frac{1}{4}\lambda^2-x)$, then \begin{itemize}
        \item[(I.1)] If $\lambda=0$, then $M$ is a Hermitian matrix (type (i)).
        \item[(I.2)] If $\lambda \neq 0$ then $M$ is of type (ii). \\[-5pt]
    \end{itemize}
    \item[(II)] If $y^2 < \lambda^2(\frac{1}{4}\lambda^2-x)$, then $M$ is of type (iii).\\[-5pt]
    \item[(III)] If $y^2 > \lambda^2(\frac{1}{4}\lambda^2-x)$, then $M$ is of type (iv).
    \end{itemize}
\end{theorem}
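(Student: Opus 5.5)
The proof combines Proposition~\ref{sim} with the case analysis in the proof of Lemma~\ref{lema: block-dim6}. That analysis shows the *congruence type of a regular $M$ is fixed by the similarity class of $C(M)$. One eigenvalue whose eigenspace has dimension 2 gives type (i). One eigenvalue whose eigenspace has dimension 1 gives type (ii). Two distinct eigenvalues $\mu_0,\bar\mu_0^{-1}$ with $|\mu_0|\neq1$ give type (iii). Two distinct unimodular eigenvalues give type (iv). It therefore suffices to read off, from the sign of $\widetilde\Delta=\lambda^2(\frac14\lambda^2-x)-y^2$, which of these configurations occurs for the cosquare \eqref{cosquare-dim6}. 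Throughout, $M$ is regular because $\det M=D\neq0$, and $\lambda\in\mathbb R$ as in Proposition~\ref{sim}.

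Cases (II) and (III) are immediate. If $\widetilde\Delta>0$, Proposition~\ref{sim}(ii) gives two distinct eigenvalues $z_1,z_2$ with $|z_i|\neq1$, and $z_2=1/\bar z_1$ by Lemma~\ref{eigenvalue}. Hence $C(M)\sim_{sim}\mathrm{Diag}(\mu_0,\bar\mu_0^{-1})$ with $|\mu_0|>1$, which is type (iii). If $\widetilde\Delta<0$, Proposition~\ref{sim}(iii) gives two distinct eigenvalues of modulus 1. Then $C(M)$ is diagonalizable with two $J_1$ blocks, and the regular algorithm gives type (iv).

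Case (I) needs the extra work. Here $\widetilde\Delta=0$ and, by Proposition~\ref{sim}(i), there is a double eigenvalue $\mu_0=\frac{2x-\lambda^2}{2\bar D}$, which has modulus 1 by Lemma~\ref{eigenvalue}.
\begin{itemize}
\item[(I.1)] If $\lambda=0$, the condition forces $y=0$, so $D\in\mathbb R^*$ and $M=\mathrm{diag}(1,D)$ is Hermitian, which is type (i).
\item[(I.2)] If $\lambda\neq0$, the only point is to show that $C(M)$ is not diagonalizable, so that $C(M)\sim_{sim}J_2(\mu_0)$. A matrix with a single eigenvalue is diagonalizable only if it equals $\mu_0 I_2$. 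But the $(1,2)$ entry of $C(M)$ in \eqref{cosquare-dim6} is $\lambda\neq0$, so $C(M)\neq\mu_0I_2$. Hence $\dim W_0=1$, and the argument of Lemma~\ref{lema: block-dim6} gives $M\sim_*\varepsilon e^{i\theta_0/2}\Delta_2$, which is type (ii).
\end{itemize}
The main obstacle is exactly this step, ruling out diagonalizability in (I.2). It is settled by the off-diagonal entry $\lambda$, so no explicit Jordan basis computation is needed. Along the way one should also confirm in (I.1) that $\widetilde\Delta=0$ is consistent, which it is since $y=0$.
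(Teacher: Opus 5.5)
Your proposal is correct and follows the paper's route. The paper gives no written proof beyond combining Proposition~\ref{sim} with the case analysis of Lemma~\ref{lema: block-dim6}, and your observation that in (I.2) the $(1,2)$-entry $\lambda\neq0$ keeps $C(M)$ from being scalar, hence $C(M)\sim_{sim}J_2(\mu_0)$ rather than type (i), supplies the detail the paper leaves implicit.
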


%%%%%%%%%%%%%%%%%%%%%%%%%%%%%%%%%%%%%%%%%%%%%%%%%%%%%%%%%%%%%%%%%%%%%
\subsection{The 8-dimensional case}\label{sub:8}

In this section we classify the 1-abelian complex structures on nilpotent Lie algebras of dimension 8, using the same procedure as before.  We will study first the Hermitian case and then the non-Hermitian one depending if the matrix $M$ is regular or not. 

\subsubsection{The Hermitian case in dimension 8}
Consider a 1-abelian complex structure with a Hermitian matrix $M$.   According to Theorem~\ref{teo:Hermitian}, since $n=4$ is an even number and the rank of the matrix $M$ can be 1, 2 or 3, the complex structures $J_{4,k,r}$ (see equations~\eqref{eq:Hermitian}) and the underlying Lie algebras are the following:

    \begin{theorem}\label{Hermitian}
       Let $(\mathfrak{g}, J)$ be an $8$-dimensional NLA with $1$-abelian complex structure with $M$ a Hermitian matrix of rank $4-r$. Then there exists a basis $\{\sigma^i\}_{i=1}^4$ of $\mathfrak{g}^{1,0}$ such that the complex structure equations $J_{4,k,r}$ are given by:
       $$J_{4,3,0} :  \begin{cases}
            d\sigma^1=d\sigma^2=d\sigma^3=0\\
            d\sigma^4=\sigma^{1\bar 1}+\sigma^{2\bar 2}+\sigma^{3\bar 3},
        \end{cases} \quad J_{4,2,0} :\begin{cases}
            d\sigma^1=d\sigma^2=d\sigma^3=0\\
            d\sigma^4=\sigma^{1\bar 1}+\sigma^{2\bar 2}-\sigma^{3\bar 3},
        \end{cases}$$
        $$J_{4,2,1} :  \begin{cases}
            d\sigma^1=d\sigma^2=d\sigma^3=0\\
            d\sigma^4=\sigma^{1\bar 1}+\sigma^{2\bar 2},
        \end{cases} \quad J_{4,1,1} :\begin{cases}
            d\sigma^1=d\sigma^2=d\sigma^3=0\\
            d\sigma^4=\sigma^{1\bar 1}-\sigma^{2\bar 2},
        \end{cases}\quad J_{4,1,2} :\begin{cases}
            d\sigma^1=d\sigma^2=d\sigma^3=0\\
            d\sigma^4=\sigma^{1\bar 1}.
        \end{cases}$$
        %$$J_{4,1,2} :\begin{cases}
         %   d\sigma^1=d\sigma^2=d\sigma^3=0\\
        %    d\sigma^4=\sigma^{1\bar 1}.
        %\end{cases}$$
    \end{theorem}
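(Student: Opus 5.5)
This is essentially a specialization of Theorem~\ref{teo:Hermitian} to $n=4$, so the plan is to invoke that result and then enumerate the admissible triples $(k,r)$. Since $M\in M_3(\mathbb C)$ is Hermitian and nonzero (a 1-abelian structure has $M\neq 0$), its rank $4-1-r=3-r$ lies in $\{1,2,3\}$, so $r\in\{0,1,2\}$. By Sylvester's inertia law (Theorem~\ref{sil}) there is $P\in GL(3,\mathbb C)$ with $PMP^*=\mathrm{diag}(I_p,-I_q,0_r)$. By Lemma~\ref{equivalence} with $c=1$, $J_M$ is equivalent to the structure with this diagonal matrix. By the lemma identifying $(p,q,r)$ with $(q,p,r)$, which uses the rescaling $\sigma^4\mapsto-\sigma^4$, we may further assume $p\geq q$. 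Writing $k=p$, this gives exactly the equations $J_{4,k,r}$ of \eqref{eq:Hermitian}.

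The remaining step is to list the allowed $k$ for each $r$, using the ranges in Theorem~\ref{teo:Hermitian}. Set $p+q=3-r$ and $p\ge q$.
\begin{itemize}
\item For $r=0$ ($n-r=4$ even), $k$ runs from $\lfloor 3/2\rfloor+1=2$ to $3$. This gives $J_{4,3,0}$ and $J_{4,2,0}$.
\item For $r=1$ ($n-r=3$ odd), $k$ runs from $\lfloor 2/2\rfloor=1$ to $2$. This gives $J_{4,2,1}$ and $J_{4,1,1}$.
\item For $r=2$ ($n-r=2$ even), $k$ runs from $\lfloor 1/2\rfloor+1=1$ to $1$. This gives $J_{4,1,2}$.
\end{itemize}
Writing out each $d\sigma^4=\sum_{j\le k}\sigma^{j\bar j}-\sum_{k<j\le 3-r}\sigma^{j\bar j}$ yields the five displayed systems.

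The only step that needs care is the range bookkeeping. One has to check that $p\ge q$ matches the lower bound in each parity case; in particular, for $r=1$ the value $k=1$, i.e.\ $(p,q)=(1,1)$, must be kept, since there $p=q$. One should also confirm that no two of the listed structures are equivalent. Rank is invariant under $M\mapsto cPMP^*$. For fixed rank, the unordered pair $\{p,q\}$ is invariant under *congruence. Multiplying by $c$ preserves the Hermitian property only for real $c$, and real $c$ either preserves or swaps the signature. Hence the five structures are pairwise non-equivalent.
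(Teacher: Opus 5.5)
Your proposal is correct and follows the paper's approach: the paper obtains this result simply by specializing Theorem~\ref{teo:Hermitian} to $n=4$ with $r\in\{0,1,2\}$, and your $k$-ranges match. Your added non-equivalence check (a real $c$ is forced, so the unordered signature is invariant) goes slightly beyond what the paper writes, and you rightly read the rank as $3-r$ rather than the stated $4-r$.
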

    Moreover, the underlying Lie algebras are of Heisenberg type.  Concretely:
    $$J_{4,3,0},\, J_{4,2,0}\rightarrow \mathfrak h_{4,0},\quad J_{4,2,1},\, J_{4,1,1}\rightarrow \mathfrak h_{4,1},\quad J_{4,1,2}\rightarrow \mathfrak h_{4,2}.$$ 
 Recall that $\mathfrak h_{4,1}$ and $\mathfrak h_{4,2}$ are products of Lie algebras (see \eqref{eq:relation-heis}).  In fact, complex structures of maximal type are $J_{4,3,0}$ and $J_{4,2,0}$ (recall Remark~\ref{Hermitian-max}).

\subsubsection{The regular non-Hermitian case in dimension 8}

If $M$ is regular but not Hermitian, similar to the discussion in dimension 6, $S=0$ in Theorem~\ref{class} and the number of blocks $\Delta_j$ and $H_{2i}(\mu)$ is determined by the similarity type of its *cosquare, $C(M)$. In this case $C(M)\in GL(3,\mathbb C)$ and its Jordan canonical form is determined by its eigenvalues $\eta_0, \eta_1, \eta_2$ and the dimension of the associated eigenspaces ($W_i$). 

In the following result we establish the possibilities for the block decomposition of $M$ analyzing the spectral properties of $C(M)$: 

\begin{lemma}\label{lema: block-dim8}
Let $(\mathfrak{g}, J_M)$ be an 8-dimensional NLA with a 1-abelian complex structure where $M$ is a regular non-Hermitian matrix, given by \eqref{ouracs}.  Then, there exist a basis $\{\tau^i\}_{i=1}^4$ of $\mathfrak{g}^{1,0}$ for which matrix $M$ is one of the following, where $\varepsilon, \varepsilon_i = \pm 1$:
\begin{itemize}
\item[(i)] $M= c\widetilde M$, where $|c| = 1$ and $\widetilde M$ is Hermitian;
\item[(ii)] $M=\varepsilon_1e^{i\nicefrac{\theta}{2}}\Delta_2 \oplus \varepsilon_2e^{i\nicefrac{\theta}{2}}\Delta_1=\left(\begin{array}{@{}c|c@{}}
  \begin{matrix}
        & \varepsilon_1e^{i\frac{\theta}{2}}\\
       \varepsilon_1e^{i\frac{\theta}{2}} & \varepsilon_1e^{i\frac{\theta+\pi}{2}}
  \end{matrix}
  &  \\
\hline
   &
  \varepsilon_2e^{i\frac{\theta}{2}}
\end{array}\right)$;
\item[(iii)] $M=\varepsilon e^{i\nicefrac{\theta}{2}}\Delta_3=\begin{pmatrix}
        & & \varepsilon e^{i\frac{\theta}{2}}\\
        & \varepsilon e^{i\frac{\theta}{2}} & \varepsilon e^{i\frac{\theta+\pi}{2}}\\
        \varepsilon e^{i\frac{\theta}{2}} & \varepsilon e^{i\frac{\theta+\pi}{2}} &
    \end{pmatrix}$;
  \item[(iv)] $
    M=\varepsilon_0e^{i\nicefrac{\theta_0}{2}}\Delta_1\oplus\varepsilon_1e^{i\nicefrac{\theta_0}{2}}\Delta_1\oplus\varepsilon_2e^{i\nicefrac{\theta_1}{2}}\Delta_1  =\left(\begin{array}{c|c|c}
\varepsilon_0e^{i\nicefrac{\theta_0}{2}} &  &  \\
\hline
 & \varepsilon_1e^{i\nicefrac{\theta_0}{2}} & \\
\hline
 &  & \varepsilon_2e^{i\nicefrac{\theta_1}{2}}
\end{array}\right)$;
\item[(v)] $
        M=\varepsilon_1e^{i\nicefrac{\theta_0}{2}}\Delta_2 \oplus \varepsilon_2e^{i\nicefrac{\theta_1}{2}}\Delta_1=\left(\begin{array}{@{}c|c@{}}
  \begin{matrix}
        & \varepsilon_1e^{i\frac{\theta_0}{2}}\\
       \varepsilon_1e^{i\frac{\theta_0}{2}} & \varepsilon_1e^{i\frac{\theta_0+\pi}{2}}
  \end{matrix}
  &  \\
\hline
   &
  \varepsilon_2e^{i\frac{\theta_1}{2}}
\end{array}\right)$;
\item[(vi)] $
    M=H_2(\eta_0)\oplus \varepsilon e^{i\nicefrac{\theta_1}{2}}\Delta_1=\left(\begin{array}{@{}c|c@{}}
  \begin{matrix}
      & 1\\
     \eta_0 & \\
  \end{matrix}
  &  \\
\hline
   &
  \varepsilon e^{i\frac{\theta_1}{2}}
\end{array}\right)$;

\item[(vii)] $M=\varepsilon_0e^{i\nicefrac{\theta_0}{2}}\Delta_1 \oplus \varepsilon_1e^{i\nicefrac{\theta_1}{2}}\Delta_1 \oplus \varepsilon_2e^{i\nicefrac{\theta_2}{2}}\Delta_1 
        =\left(\begin{array}{c|c|c}
\varepsilon_0e^{i\nicefrac{\theta_0}{2}} &  &  \\
\hline
 & \varepsilon_1e^{i\nicefrac{\theta_1}{2}} & \\
\hline
 &  & \varepsilon_2e^{i\nicefrac{\theta_2}{2}}
\end{array}\right).$
\end{itemize}
\end{lemma}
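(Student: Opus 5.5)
The plan mirrors the proof of Lemma~\ref{lema: block-dim6}: classify the possible Jordan forms of the *cosquare $C(M)\in GL(3,\mathbb C)$ using the spectral symmetry of Lemma~\ref{eigenvalue}. Then translate each Jordan type into a *congruence canonical form through the regular algorithm of Section~\ref{section3.1} (steps (a)--(c)), together with Lemma~\ref{simil-cosquare}. Since $M$ is regular, the singular part $S$ of Theorem~\ref{class} is absent, so $M\sim_* R$ with $R$ a sum of blocks $H_{2k}(\mu)$ and $\varepsilon e^{i\phi/2}\Delta_m$ whose sizes add up to $3$. The basis $\{\tau^i\}$ is then obtained from $\{\omega^i\}$ through the matrix $P$ realizing the *congruence; by Lemma~\ref{equivalence} with $c=1$, no rescaling of the last form is needed.

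First I would restrict the eigenvalue pattern. Eigenvalues of modulus $\neq 1$ come in pairs $(\mu,\bar\mu^{-1})$ with equal Jordan structure. Since $3$ is odd, at most one such pair occurs, and then it consists of simple eigenvalues. Two cases follow. If there is a pair, the remaining eigenvalue $e^{i\theta_1}$ is simple, which gives $H_2(\eta_0)\oplus\varepsilon e^{i\theta_1/2}\Delta_1$, case (vi). Note that $H_4$ cannot occur, because it would need multiplicity $2$ for each member of the pair. If all eigenvalues have modulus $1$, I would split further by the number of distinct eigenvalues.

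With a single eigenvalue $e^{i\theta}$ of algebraic multiplicity $3$, the geometric multiplicity decides the form. Geometric multiplicity $3$ makes $C(M)$ scalar, so Remark~\ref{hermHS} gives case (i). Multiplicity $2$ gives the Jordan type $J_2\oplus J_1$ and hence case (ii). Multiplicity $1$ gives $J_3$ and hence case (iii), using Lemma~\ref{simil-cosquare}(iv) for the block $e^{i\theta/2}\Delta_n$. With two distinct eigenvalues $e^{i\theta_0}$ (double) and $e^{i\theta_1}$ (simple), the double one has geometric multiplicity $2$ or $1$, giving cases (iv) and (v) respectively. With three distinct unimodular eigenvalues, $C(M)$ is diagonalizable and we get case (vii). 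In every case, step (b) builds $B$ with $C(B)\sim_{sim}C(M)$. Step (c), based on Theorem~\ref{lemma5}, then yields $M\sim_*$ the same direct sum with independent signs $\varepsilon_j$ on the $\Delta$ blocks. The signs of the $H$ block are absorbed using Lemma~\ref{simil-cosquare}(ii).

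The main obstacle is step (c): we must make sure that the signs may be attached block by block, and that the splitting $A\sim_*(-D_-)\oplus D_+$ respects the block decomposition of $B$. I would not reprove this. I would instead invoke the uniqueness in Theorem~\ref{HS}: $M$ is *congruent to some direct sum of Type I and II blocks, and Lemma~\ref{cosquaresim} forces its *cosquare to have the Jordan type identified above. This pins down the block sizes and the phases $e^{i\phi}$ up to the sign $\pm$. Finally, I would check that this case list is exhaustive, i.e.\ that every partition of $3$ into Jordan blocks compatible with Lemma~\ref{eigenvalue} has been covered. One also has to note that in cases (ii) and (iv) the equal phase $\theta$ (resp.\ $\theta_0$) on the two blocks comes from the shared eigenvalue.
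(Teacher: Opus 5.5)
Your proposal is correct and follows the paper's proof essentially verbatim. Like the paper, you split into cases on the Jordan type of $C(M)\in GL(3,\mathbb C)$: a triple eigenvalue with geometric multiplicity $3,2,1$ gives (i)--(iii); a double plus a simple eigenvalue gives (iv)--(v); three simple eigenvalues give (vi) or (vii), depending on whether a pair $(\eta_0,\bar\eta_0^{-1})$ occurs. You then translate each case into blocks via Lemma~\ref{simil-cosquare} and the regular algorithm. Your justification of the block-by-block signs is, if anything, cleaner than the paper's appeal to step (c): the existence part of Theorem~\ref{HS}, combined with Lemma~\ref{cosquaresim} and uniqueness of the Jordan form, fixes the block sizes and phases up to sign.
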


\begin{proof}
In a similar way of the proof of Lemma~\ref{lema: block-dim6}, it suffices to analyze which are the possible different eigenvalues of $C(M)$, their modulus and the dimension of their corresponding eigenspace.  As notation, eigenvalues will be denoted as $\mu_0$, $W_i$ will refer to the corresponding eigenspaces.  In the case of values of modulus 1, we will write $\mu_j = e^{i\theta_j}$.

Let us start with the case where there is only one eigenvalue $\mu_0$ of multiplicity 3, $p_{C(M)}(z)=(z-\mu_0)^3$.  By Lemma \ref{eigenvalue}, $|\mu_0|=1$.
Remark~\ref{hermHS} gives directly case (i) and allows us to consider the case when $dim(W_0)\neq 3$.  If $dim(W_0)=2$, then 
        \begin{equation*}
            C(M) \sim J_2(\eta_0)\oplus J_1(\eta_0)=\left(\begin{array}{@{}c|c@{}}
  \begin{matrix}
      \eta_0 & 1\\
       & \eta_0
  \end{matrix}
  &  \\
\hline
   &
  \eta_0
\end{array}\right),
        \end{equation*}
and therefore, the block form of $M$ is 
$M=\varepsilon_1e^{i\nicefrac{\theta_0}{2}}\Delta_2 \oplus \varepsilon_2e^{i\nicefrac{\theta_0}{2}}\Delta_1,$ obtaining case (ii).  In contrast, if  $dim(W_0)=1$, then
        \begin{equation*}
            C(M) \sim J_3(\eta_0)=\begin{pmatrix}
                \eta_0&1&\\
                &\eta_0&1\\
                &&\eta_0
            \end{pmatrix},
        \end{equation*}
and the block decomposition for $M$ is simply $M=\varepsilon e^{i\nicefrac{\theta_0}{2}}\Delta_3$, which gives case (iii).

 The second option related with eigenvalues of $C(M)$ is the existence of one eigenvalue $\eta_0$ of algebraic multiplicity~2, and another eigenvalue $\eta_1$ of multiplicity~1, that is, $p_{C(M)}(z)=(z-\eta_0)^2(z-\eta_1)$. As in the previous case, by Lemma \ref{eigenvalue}, both $\eta_0$ and $\eta_1$ must satisfy that $|\eta_0|=|\eta_1|=1$, (indeed, if $|\eta_1|>1$, then $\nicefrac{1}{\overline{\eta_1}}$ would be also an eigenvalue with the same multiplicity~2, but this is impossible due to the size of $M$). We can write $\eta_0=e^{i\theta_0}$ and $\eta_1=e^{i\theta_1}$ with $\theta_1\neq\theta_0+2k\pi$.  We have two possibilities for the Jordan canonical form of $C(M)$: 
firstly, if $dim(W_0)=2 \text{ and }dim(W_1)=1$, then $C(M)$ is diagonalizable and 
$C(M) \sim Diag(\eta_0, \eta_0, \eta_1)$ and $
    M=\varepsilon_0e^{i\nicefrac{\theta_0}{2}}\Delta_1\oplus\varepsilon_1e^{i\nicefrac{\theta_0}{2}}\Delta_1\oplus\varepsilon_2e^{i\nicefrac{\theta_1}{2}}\Delta_1,$ which correspond with case (iv).  Secondly, $dim(W_0)=1 \text{ and }dim(W_1)=1$, then \begin{equation*}
            C(M) \sim J_2(\eta_0)\oplus J_1(\eta_1)=\left(\begin{array}{@{}c|c@{}}
  \begin{matrix}
      \eta_0 & 1\\
       & \eta_0
  \end{matrix}
  &  \\
\hline
   &
  \eta_1
\end{array}\right).
        \end{equation*}
        In this case, the matrix $M$ decomposes into two blocks as $
        M=\varepsilon_1e^{i\nicefrac{\theta_0}{2}}\Delta_2 \oplus \varepsilon_2e^{i\nicefrac{\theta_1}{2}}\Delta_1$, providing case (v).

Finally, the last option is when $C(M)$ has three different eigenvalues $\eta_0, \, \eta_1, \, \eta_2$ each of them of algebraic multiplicity 1, then $p_{C(M)}(z)=(z-\eta_0)(z-\eta_1)(z-\eta_2)$ and $C(M) \sim Diag(\eta_0, \eta_1, \eta_2)$. Regarding the modulus of the eigenvalues, two situations can happen: 
on the one hand, if one of them, let say $\eta_0$, has modulus not equal to 1, then $\nicefrac{1}{\overline{\eta_0}}$ must be also an eigenvalue, so $Sp(C(M))=\{\eta_0, \nicefrac{1}{\overline{\eta_0}}, \eta_1\}$ and $\eta_1$ must have modulus 1. If we express $\eta_1=e^{i\theta_1}$, then $   
    M=H_2(\eta_0)\oplus \varepsilon e^{i\nicefrac{\theta_1}{2}}\Delta_1$, corresponding to case (vi).  On the other hand, if all the eigenvalues have modulus 1, then $M = \varepsilon_0e^{i\nicefrac{\theta_0}{2}}\Delta_1 \oplus \varepsilon_1e^{i\nicefrac{\theta_1}{2}}\Delta_1 \oplus \varepsilon_2e^{i\nicefrac{\theta_2}{2}}\Delta_1$, as in case (vii).
\end{proof}

\medskip

The previous discussion allows us to consider 1-abelian complex structures $J_M$ with regular non-Hermitian matrices $M$ of forms (ii),\ldots, (vii) in Lemma~\ref{lema: block-dim8}.  However, in order to classify complex structures up to equivalence, we have to take into account the rescaling condition.  The main result of the section is the following:
\begin{theorem}\label{Th-dim8-rg3}
    Let $(\mathfrak{g}, J_M)$ be an 8-dimensional NLA with a 1-abelian complex structure defined by a regular non-Hermitian matrix $M$.  Then there exists a basis $\{\sigma^i\}_{i=1}^4$ of $\mathfrak{g}^{1,0}$ such that the complex structure equations are $d\sigma^1=d\sigma^2=d\sigma^3=0$ and one of the following:
\begin{eqnarray*}J^4_{r1} &:& 
            d\sigma^4=\sigma^{1\bar 3}+\sigma^{2\bar 2}+\sigma^{3\bar 1}+i(\sigma^{3\bar 2}+\sigma^{2\bar 3});\\[1pt]
J^4_{r2} &:&
            d\sigma^4=\sigma^{1\bar 2}+\sigma^{2\bar 1}+i\sigma^{2\bar 2}\pm\sigma^{3\bar 3};\\[1pt]
J^4_{r3} &:&
            d\sigma^4=\sigma^{1\bar 2}+\sigma^{2\bar 1}+i\sigma^{2\bar 2}+z\sigma^{3\bar 3},\quad |z|=1,\, z\neq \pm 1; \\[1pt]
J^4_{r4+}&:&
            d\sigma^4=\sigma^{1\bar 1}+ \sigma^{2\bar 2}+z\sigma^{3\bar 3},\quad |z|=1,\, z\neq \pm 1;\\[1pt]
J^4_{r4-}&:&
            d\sigma^4=\sigma^{1\bar 1}- \sigma^{2\bar 2}+z\sigma^{3\bar 3},\quad |z|=1,\,\, z=e^{i\theta},\, \theta\in(0,\pi);\\[1pt]
J^4_{r5}&:&
            d\sigma^4=\sigma^{1\bar 2}+z_0\sigma^{2\bar 1}+\sigma^{3\bar 3},\quad |z_0|>1;\\[1pt]
J^4_{r6}& : &
            d\sigma^4=\sigma^{1\bar 1}+z\sigma^{2\bar 2}+z_1\sigma^{3\bar 3},\quad |z|=1 \text{ and } z\neq \pm 1;\,\, |z_1| = 1,\,  \\ &&\phantom{d\sigma^4=\sigma^{1\bar 1}+z\sigma^{2\bar 2}+z_1\sigma^{3\bar 3},} \text{ and } z_1 = e^{i\theta}, \theta\in(0, \pi).\text{ Moreover } \, z\neq \pm z_1. 
 \end{eqnarray*}       
        %where $z, z_0, z_1 \in \mathbb{C}$ with $|z|=1$ and $z\neq \pm 1$. In $(V)$, $|z_0|>1$ and, in $(VI)$, $|z_0|=|z_1|=1$ and $z_0 \neq \pm z_1,\, z_0, z_1\notin \mathbb R$. 
\end{theorem}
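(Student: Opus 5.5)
The plan mirrors the proof of Theorem~\ref{classHS}. We start from Lemma~\ref{lema: block-dim8}, which gives a basis $\{\tau^i\}_{i=1}^4$ in which $M$ has one of the forms (i)--(vii). Case (i) is a multiple of a Hermitian matrix, so by Corollary~\ref{rmk:multiple-Hermitian} it reduces to Theorem~\ref{Hermitian} and can be discarded here. For each remaining case we use only the two moves allowed by Lemma~\ref{equivalence}.

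\begin{itemize}
\item[(a)] Rescaling $\tau^4=c\,\sigma^4$ multiplies $M$ by $c$. This is used to normalize one block coefficient to $1$; in particular it cancels the common phase $\varepsilon e^{i\theta/2}$ of a $\Delta$ block.
\item[(b)] Diagonal or permutation changes $P$ among $\tau^1,\tau^2,\tau^3$ do the rest. A diagonal entry $e^{i\alpha}$ multiplies $M_{jk}$ by $e^{i(\alpha_j-\alpha_k)}$, and a permutation reorders the $\Delta_1$ blocks.
\end{itemize}

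The cases are handled as follows.

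\begin{itemize}
\item[(iii)] Take $c=\varepsilon e^{i\theta/2}$. This turns $\varepsilon e^{i\theta/2}\Delta_3$ into $\Delta_3$, which is $J^4_{r1}$.
\item[(ii)] Dividing by $\varepsilon_1 e^{i\theta/2}$ gives $\Delta_2\oplus(\varepsilon_2/\varepsilon_1)$. This is $J^4_{r2}$ with sign $\pm$.
\item[(v)] The same normalization gives $\Delta_2\oplus z$ with $z=\pm e^{i(\theta_1-\theta_0)/2}$. Here $z\neq\pm1$ because $\theta_1\ne\theta_0$ modulo $2\pi$; this gives $J^4_{r3}$.
\item[(iv)] Dividing by $\varepsilon_0e^{i\theta_0/2}$ gives $\mathrm{diag}(1,\pm1,z)$ with $z\ne\pm1$. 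The $+$ sign gives $J^4_{r4+}$. For the $-$ sign, conjugate by the swap $\tau^1\leftrightarrow\tau^2$ and then rescale by $-1$; this sends $\mathrm{diag}(1,-1,z)$ to $\mathrm{diag}(1,-1,-z)$. Since $z$ and $-z$ have opposite imaginary parts, we may take $\theta\in(0,\pi)$, which is $J^4_{r4-}$.
\item[(vi)] First divide by $\varepsilon e^{i\theta_1/2}$ so that the $\Delta_1$ entry becomes $1$. The $H_2$ block then becomes $\begin{pmatrix}0&a\\ a\eta_0&0\end{pmatrix}$ with $|a|=1$. Scale $\tau^1$ by a phase to make the $(1,2)$ entry equal to $1$. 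The $(2,1)$ entry is then $z_0=a^2\eta_0$ up to phase, with $|z_0|=|\eta_0|>1$. This is $J^4_{r5}$.
\item[(vii)] Normalize the first entry to $1$, giving $\mathrm{diag}(1,z,z_1)$. Here $z$ and $z_1$ are not $\pm1$, and $z\ne\pm z_1$, since the three eigenvalues $e^{i\theta_j}$ of $C(M)$ are distinct. The remaining symmetries are permuting the three diagonal entries, renormalizing by any one of them, and the global sign trick above. These are used to put $\arg z_1\in(0,\pi)$, which gives $J^4_{r6}$.
\end{itemize}

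The bookkeeping step I expect to be most delicate is confirming that the listed normal forms are exhaustive. For $J^4_{r4\pm}$, $J^4_{r5}$ and $J^4_{r6}$ one must decide which residual symmetries remain in the phase parameters and choose fundamental domains. The tools are the invariants: the $*$cosquare spectrum up to the common phase factor $\bar c/c$, and the sign characteristics $\varepsilon_j$ relative to one another.

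The statement's "$\pm$" in $J^4_{r2}$ suggests the two signs are inequivalent. This should be checked by showing that no rescaling preserves $\Delta_2$ up to $*$congruence except $c>0$. Such a claim follows from uniqueness in Theorem~\ref{HS} combined with the rule $C(cM)=\frac{c}{\bar c}C(M)$. For $J^4_{r5}$, the claim is that $z_0$ is determined only up to the phase freedom just described. My first attempt would keep $z_0$ general with $|z_0|>1$, matching the statement as given, rather than trying to force it real as in dimension six.
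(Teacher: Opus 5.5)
Your proposal is correct and follows the paper's proof essentially step for step. It uses the same case-by-case pass through (ii)--(vii) of Lemma~\ref{lema: block-dim8}, the same rescaling of $\sigma^4$ to absorb the common $\Delta$-phase, an extra phase change on the first basis element for the $H_2$ case, and the swap-and-sign trick $(1,-1,z)\mapsto(1,-1,-z)$ for $J^4_{r4-}$. For $J^4_{r6}$ the paper uses one specific combination of the symmetries you list: the $1\leftrightarrow 3$ swap followed by renormalization by $z_1$, which sends $(z,z_1)\mapsto(z/z_1,\bar z_1)$.

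Your closing remarks on inequivalence of the normal forms go beyond what this existence statement requires. Incidentally, $z_0$ in $J^4_{r5}$ has no residual phase freedom, because the unique modulus-one eigenvalue of the *cosquare forces $c/\bar c=1$.
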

\begin{proof}
Consider a 1-abelian complex structures $J_M$ with a regular non-Hermitian matrix $M$ of the form (ii),\ldots, (vii).  Let us study case by case:
\begin{itemize}
    \item If $M$ is of form (ii), just consider the new basis $\{\sigma^1,\ldots, \sigma^4\}$ given by
    \begin{equation}\label{eq:cambio-reescalamiento}\omega^i=\sigma^i, \quad \omega^4=\varepsilon_1e^{i\nicefrac{\theta}{2}}\sigma^4,
    \end{equation}
    for $i=1,2,3$, to obtain the expression $J^4_{r2}$.
    \item  For matrices of form (iii), the same basis \eqref{eq:cambio-reescalamiento} provides $J^4_{r1}$.
    \item In the case (v), we can also consider basis \eqref{eq:cambio-reescalamiento} to obtain $J^4_{r3}$ with $z=\dfrac{\varepsilon_2e^{i\nicefrac{\theta_1}{2}}}{\varepsilon_1e^{i\nicefrac{\theta_0}{2}}}=\pm e^{i\frac{\theta_1-\theta_0}{2}}$. Note that since $\eta_0 \neq \eta_1$, then, $z\neq \pm 1$.
     \item For matrices of form (iv), the basis (writing $\tau^j$ instead of $\sigma^j$) \eqref{eq:cambio-reescalamiento} gives the following equation for $d\tau^4$, which will be identified with the pair $(\varepsilon, z)$: 
\begin{equation}\label{IV}
    d\tau^4=\tau^{1\bar 1}+\varepsilon \tau^{2\bar 2}+z\tau^{3\bar 3},
\end{equation} with $\varepsilon \in \{1, -1\}$ and $z=\dfrac{\varepsilon_2e^{i\nicefrac{\theta_1}{2}}}{\varepsilon_0e^{i\nicefrac{\theta_0}{2}}}=\pm e^{i\frac{\theta_1-\theta_0}{2}}$. Note that as $\eta_0 \neq \eta_1$, then $\theta_1\neq\theta_0+2k\pi$, so $z\neq \pm 1$. If $\varepsilon=1$, then equation \eqref{IV} provides directly case $J^4_{r4+}$. If $\varepsilon=-1$, the change $$\tau^1=\sigma^2, \, \tau^2=\sigma^1, \, \tau^3=\sigma^3, \, \tau^4=-\sigma^4$$ gives an equivalence between the complex structures represented by $(-1, z)$ and $(-1, -z)$ 
So we can take $z$ with $\mathfrak{Im}(z) > 0$ obtaining case $J^4_{r4-}$.
\item If $M$ is of form (vi), take the new basis $\{\sigma^1,\ldots, \sigma^4\}$ given by: 
$$\omega^1=\varepsilon\, e^{i\nicefrac{\theta_1}{2}}\,\sigma^1,\quad \omega^2=\sigma^2,\quad \omega^3=\sigma^3,\quad \omega^4=\varepsilon\, e^{i\nicefrac{\theta_1}{2}}\,\sigma^4,$$
getting equation $J^4_{r5}$ with $z_0=\varepsilon\,\eta_0\,e^{i\theta_1}$. Note that $|z_0|=|\eta_0|>1$.
\item Finally, for matrices of form (vii), in terms of the basis (writing $\tau^j$ instead of $\sigma^j$) \eqref{eq:cambio-reescalamiento} we get 
$$
    d\tau^4=\tau^{1\bar 1}+z_0\tau^{2\bar 2}+z_1\tau^{3\bar 3}
$$ with $z_0=\dfrac{\varepsilon_1e^{i\nicefrac{\theta_1}{2}}}{\varepsilon_0e^{i\nicefrac{\theta_0}{2}}}=\pm e^{i\frac{\theta_1-\theta_0}{2}}$ and $z_1=\dfrac{\varepsilon_1e^{i\nicefrac{\theta_2}{2}}}{\varepsilon_0e^{i\nicefrac{\theta_0}{2}}}=\pm e^{i\frac{\theta_2-\theta_0}{2}}$. 

As before, we identify the previous complex structure with pair $(z_0, z_1)$. The change 
$$\tau^1=z_1\,\sigma^3, \quad \tau^2=\sigma^2, \quad \tau^3=\sigma^1, \, \tau^4=z_1\,\sigma^4$$ gives us an equivalence between $(z_0, z_1)$ and $\left(\dfrac{z_0}{z_1}, \overline{z_1}\right)$. In the latter case, note that $\dfrac{z_0}{z_1}$ has modulus 1. Thanks to this change, we can take $z_1$ with $\mathfrak{Im}(z_1) > 0$ obtaining case $J^4_{r6}$, where $z=\dfrac{z_0}{z_1}$.
\end{itemize}
\end{proof}

To conclude this section, we determine the underlying Lie algebras of each family of complex structures listed in Theorem~\ref{Th-dim8-rg3}.

\begin{theorem}\label{teo_algebras-regular}
Let $(\mathfrak{g}, J_M)$ be an $8$-dimensional NLA with a $1$-abelian complex structure defined by a regular non-Hermitian matrix $M$.  Then the Lie algebra $\mathfrak{g}$ is isomorphic to one in the following list where in all cases $\{e^1,\ldots, e^8\}$ is a basis of 1-forms where $e^j,\, j=1,\ldots, 6$ are closed:
\begin{itemize}
%\item[(i)] $\mathfrak{g}^8_{r1}: \quad de^7 = e^{36} - e^{45},\quad
%de^8 = e^{16} - e^{25} +e^{34}$.\\[-5pt]
%\item[(ii)] $\mathfrak{g}^8_{r2}: \quad de^7 = e^{34},\quad
%de^8 = e^{14} - e^{23} +e^{56}$.\\[-5pt]
%\item[(iii)] $\mathfrak{g}^8_{r3}: \quad de^7 = e^{34}+e^{56},\quad
%de^8 = e^{14} - e^{23}$.\\[-5pt]
%\item[(iv)] $\mathfrak{g}^8_{r4}: \quad de^7 = e^{12},\quad
%de^8 = e^{34} +e^{56}$.\\[-5pt]
%\item[(v)] $\mathfrak{g}^8_{r5}: \quad de^7 = e^{13} + e^{24} + e^{56},\quad
%de^8 = e^{14} - e^{23}.$\\[-5pt] %+ \lambda e^{56},$ where $\lambda\in[0,1]$.
%\item[(vi)] $\mathfrak{g}^8_{r6}: \quad de^7 = e^{12}+e^{34},\quad
%de^8 = e^{34} +e^{56}$.
\item[(i)] $\mathfrak{g}^8_{r1}: \quad de^7 = e^{13} + e^{24} + e^{56},\quad
de^8 = e^{36} - e^{45}$.\\[-5pt]
\item[(ii)] $\mathfrak{g}^8_{r2}: \quad de^7 = e^{13} + e^{24} + e^{56},\quad
de^8 = e^{34}$.\\[-5pt]
\item[(iii)] $\mathfrak{g}^8_{r3}: \quad de^7 = e^{13}+e^{24},\quad
de^8 = e^{34} + e^{56}$.\\[-5pt]
\item[(iv)] $\mathfrak{g}^8_{r4}: \quad de^7 = e^{12},\quad
de^8 = e^{34} +e^{56}$.\\[-5pt]
\item[(v)] $\mathfrak{g}^8_{r5}: \quad de^7 = e^{13} + e^{24} + e^{56},\quad
de^8 = e^{14} - e^{23}.$\\[-5pt] %+ \lambda e^{56},$ where $\lambda\in[0,1]$.
\item[(vi)] $\mathfrak{g}^8_{r6}: \quad de^7 = e^{12}+e^{34},\quad
de^8 = e^{34} +e^{56}$.
\end{itemize}
\end{theorem}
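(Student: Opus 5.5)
By Theorem~\ref{Th-dim8-rg3}, every such $(\mathfrak g,J_M)$ is equivalent to one of the families $J^4_{r1},\dots,J^4_{r6}$ (with $J^4_{r4\pm}$ together). The plan is to treat each family as in the proof of Theorem~\ref{main-theorem-6}. In each case we write the complex coframe $\sigma^1,\sigma^2,\sigma^3$ as $e^{2k-1}\pm i e^{2k}$ and split $\sigma^4$ into real and imaginary parts as $\sigma^4=a e^7+b e^8+i(c e^7+d e^8)$. Then we compute the real and imaginary parts of $d\sigma^4$. The real 2-forms $\mathrm{Re}\, d\sigma^4$ and $\mathrm{Im}\, d\sigma^4$ determine $de^7,de^8$ after inverting the $2\times 2$ real matrix $\begin{pmatrix}a&b\\ c&d\end{pmatrix}$. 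Since the first six $e^j$ are closed, the Lie algebra is determined by the 2-dimensional subspace $\langle de^7,de^8\rangle\subset\Lambda^2\langle e^1,\dots,e^6\rangle$. The basic identity we use is $\sigma^{j\bar k}+\sigma^{k\bar j}$ and $i(\sigma^{j\bar k}-\sigma^{k\bar j})$ expansions in terms of $e^{2j-1,2k-1},e^{2j,2k},e^{2j-1,2k},e^{2j,2k-1}$. We also use $\sigma^{j\bar j}=-2i\,e^{2j-1,2j}$.

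We then go family by family.
\begin{itemize}
\item For $J^4_{r4\pm}$ and $J^4_{r6}$, $d\sigma^4$ is diagonal, so $d\sigma^4=-2i(e^{12}\pm\cdots)$ with coefficients $1,\varepsilon,z$ or $1,z,z_1$. The real and imaginary parts give two forms in the span of $e^{12},e^{34},e^{56}$. In $J^4_{r4}$, since $z\notin\mathbb R$, a real change of basis of $\langle e^7,e^8\rangle$ produces $e^{12}$ and $e^{34}+e^{56}$ (after rescaling $e^3,\dots,e^6$ and possibly flipping the orientation of pairs to absorb signs), which is $\mathfrak g^8_{r4}$. In $J^4_{r6}$, the three coefficient vectors $(1,0),(\mathrm{Re} z,\mathrm{Im} z),(\mathrm{Re} z_1,\mathrm{Im} z_1)$ are pairwise independent because $z\neq\pm z_1$ and $z,z_1\notin\mathbb R$. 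Rescaling the pairs normalises the span to $\langle e^{12}+e^{34},\,e^{34}+e^{56}\rangle$, which is $\mathfrak g^8_{r6}$.
\item For $J^4_{r5}$, we reuse the $H_2(\mu)$ computation of Table~\ref{Tabla-dim6-reducida} in the first four coordinates. Taking $\sigma^2=e^3-ie^4$ there, that block gives the $\mathfrak h_5$ forms $e^{13}+e^{24}$ and $e^{14}-e^{23}$, combined with $e^{56}$ coming from $\sigma^{3\bar3}$. First we write $z_0$ in polar form and absorb its argument by a rescaling, as in the proof of Theorem~\ref{classHS}. A real change of $e^7,e^8$ then moves $e^{56}$ into a single one of the two forms, giving $\mathfrak g^8_{r5}$.
\item For $J^4_{r2}$ and $J^4_{r3}$, the $\Delta_2$ block produces the $\mathfrak h_4$ pair, namely a form of type $e^{13}+e^{24}$ and $e^{34}$, as in the $J^3_{r1}$ row of the table. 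The extra term $\pm\sigma^{3\bar3}$ is real times $-2i$, so it lands in the same component as one of these. Depending on which component it lands in, after normalisation we get either $\mathfrak g^8_{r2}$ ($e^{56}$ attached to $e^{13}+e^{24}$) or $\mathfrak g^8_{r3}$ ($e^{56}$ attached to $e^{34}$). For $z$ non-real in $J^4_{r3}$, the term $e^{56}$ appears in both components, and we remove one copy by subtracting a multiple of the other form. We need to check that this subtraction does not destroy the shape of the remaining terms, possibly after redefining $e^7$.
\item For $J^4_{r1}$ we expand the $\Delta_3$ pattern directly and find coordinates realising $\mathfrak g^8_{r1}$.
\end{itemize}

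The main obstacle is the bookkeeping in the normalisation step. Once $de^7,de^8$ are obtained as real 2-forms with parameter-dependent coefficients, we must show that a real linear change on $\langle e^1,\dots,e^6\rangle$ (compatible with closedness) together with a $GL(2,\mathbb R)$ change on $\langle e^7,e^8\rangle$ removes every parameter ($\theta$, $|z_0|$, $z$, $z_1$). For the continuous families $J^4_{r3}$, $J^4_{r5}$ and $J^4_{r6}$ this is where we expect real work. The approach we would try first is to choose $\sigma^4=\alpha e^7+i\beta e^8$, or a general real $2\times 2$ mixing, with coefficients depending on the parameters, exactly as in the real bases of Table~\ref{Tabla-dim6-reducida}. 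We then verify nondegeneracy by showing that the resulting coefficient matrix is invertible precisely under the constraints $z\ne\pm1$, $|z_0|>1$ and $z\ne\pm z_1$.
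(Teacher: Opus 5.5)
Your framework is the paper's: you run through the families of Theorem~\ref{Th-dim8-rg3}, choose a real coframe, and use that the algebra is determined by the plane $V=\langle de^7,de^8\rangle\subset\Lambda^2\langle e^1,\dots,e^6\rangle$. Your arguments for $J^4_{r1}$, $J^4_{r2}$, $J^4_{r4\pm}$ and $J^4_{r6}$ go through; for $J^4_{r1}$ the naive coframe already gives $\mathfrak g^8_{r1}$ after a signed relabelling.

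\textbf{The gap is in $J^4_{r5}$.} You cannot absorb $\arg z_0$ ``as in the proof of Theorem~\ref{classHS}''. There the compensating phase went onto the last form, but here it then lands on the coefficient of $\sigma^{3\bar3}$. In fact $z_0$ is an invariant. $C(M)$ has eigenvalues $z_0,1/\bar z_0,1$, and an equivalence with constant $c$ multiplies them by $c/\bar c$. The unique unimodular eigenvalue $1$ must go to $1$, so $c/\bar c=1$ and $z_0'=z_0$. So for non-real $z_0=a+ib$ you must keep the full parameter.

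Take $\sigma^1=e^1+ie^2$, $\sigma^2=e^3-ie^4$, $\sigma^3=e^5+ie^6$, $P=e^{13}-e^{24}$, $Q=e^{14}+e^{23}$. Then $V=\langle (1-a)P-bQ,\;-bP+(1+a)Q-2e^{56}\rangle$. Here $e^{56}$ already appears in only one generator. The trouble is the other, $e^{56}$-free generator, which mixes $P$ and $Q$.
\begin{itemize}
\item A change of $e^7,e^8$ does not move $V$ at all.
\item For general $z_0$, no rescaling, sign change or permutation of the $e^j$ turns the four-term form $(1-a)P-bQ$ into a two-term form like $e^{14}-e^{23}$.
\end{itemize}
The missing step is a $z_0$-dependent rotation of the $(e^1,e^2)$-plane. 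It multiplies $P+iQ$ by a unit complex number, and you choose it so that $V\cap\langle P,Q\rangle$ lines up with $P$. Then $V=\langle P,\kappa Q-2e^{56}\rangle$, where $\kappa=(1-|z_0|^2)/|1-a+ib|\neq0$. Rescaling gives $\mathfrak g^8_{r5}$. This is exactly the paper's last step: it reaches a form with one real parameter $\lambda=(a-1)/b$ on the extra term of $df^8$, then kills $\lambda$ by rotating the $(f^1,f^2)$- and $(f^3,f^4)$-planes through $\theta=\frac12\arctan\lambda$. As written, your argument only covers real $z_0$.

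The same problem appears, in milder form, in $J^4_{r3}$ when $\mathrm{Re}\,z\neq0$. With $\sigma^k=e^{2k-1}+ie^{2k}$ and $z=a+ib$ you get $V=\langle e^{14}-e^{23}+ae^{56},\,e^{34}+be^{56}\rangle$. Subtracting $\frac ab$ times the second form leaves $e^{14}-e^{23}-\frac ab e^{34}$, and no redefinition of $e^7$ removes the stray $e^{34}$. You need a shear of the coframe. For example, replace $e^2$ by $\tilde e^2=e^2-\frac ab e^4$, so that $e^{14}-e^{23}-\frac ab e^{34}=e^{14}-\tilde e^2\wedge e^3$. The paper encodes this in $\sigma^1=\frac ab[(e^1-e^4)+ie^2]$.

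This is also why the plan in your last paragraph cannot work. Letting $\sigma^4=\alpha e^7+i\beta e^8$, or any $2\times2$ mixing of $e^7,e^8$, carry the parameter dependence leaves $V$ unchanged, so it removes nothing. The parameter-dependent changes must be made on $e^1,\dots,e^6$. For $J^4_{r4\pm}$ and $J^4_{r6}$ diagonal rescalings of the pairs suffice, but $J^4_{r3}$ needs a shear and $J^4_{r5}$ a rotation.
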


\begin{proof}
To obtain the result it suffices to decompose each $\sigma^j,\, j=1,2,3,4,$ appearing in Theorem~\ref{Th-dim8-rg3} in suitable real and imaginary parts.
\begin{itemize}
\item[(i)] From $J^4_{r1}$, consider $$\sigma^1 = e^1+i\,e^2,\quad \sigma^2 = e^6-i\,e^5,\quad\sigma^3 = -e^4+i\,e^3,\quad\sigma^4 = -2e^8-2i\,e^7,$$ to obtain the equations of $\mathfrak g^8_{r1}$.
\item[(ii)] If we consider $J^4_{r2}$, just take: $$\sigma^1 = e^1+i\,e^2,\quad \sigma^2 = -e^4+i\,e^3,\quad\sigma^3 = \varepsilon\, e^5+i\,e^6,\quad\sigma^4 = 2e^8-2i\,e^7,$$ where $\varepsilon = \pm 1$.
\item[(iii)] In $J^4_{r3}$ appears a complex number $z$, where $|z| =1$ but $z\neq \pm 1$.  Let us express $z = a + i\,b$.  Since $z\notin \mathbb R$, then $b\neq 0$.  However, $a$ could be zero or not.  

If $a=0$, it suffices to consider the following basis $\{e^1,\ldots, e^8\}$, where:
$$\sigma^1 = e^1+i\,e^2,\quad \sigma^2 = -e^4+i\,e^3,\quad\sigma^3 = \frac1b\, e^5+i\,e^6,\quad\sigma^4 = 2e^8-2i\,e^7.$$

On the other hand, if $a\neq 0$, consider:
$$\sigma^1 = \frac{a}{b}\left[(e^1 - e^4)+i\,e^2\right],\quad \sigma^2 = -e^4+i\,e^3,\quad\sigma^3 = \frac1b\, e^5+i\,e^6,\quad\sigma^4 = 2e^8-\frac{2a}{b}i\,(e^7 + e^8).$$
\item[(iv)] Observe that we can join the expressions of $J^4_{r4+}$ and $J^4_{r4-}$ simply as: $d\sigma^1=d\sigma^2=d\sigma^3=0$ and $d\sigma^4=\sigma^{1\bar 1}+\varepsilon\, \sigma^{2\bar 2}+z\sigma^{3\bar 3}$, 
where $\varepsilon = \pm 1$, and $|z| =1$ but $z\neq \pm 1$.  Let us express $z = a + i\,b$.  Since $z\notin \mathbb R$, then $b\neq 0$. In order to obtain the result, just take:
$$\sigma^1 = -e^5+i\,e^6,\quad \sigma^2 = -\varepsilon\,e^3+i\,e^4,\quad\sigma^3 = e^1+i\,e^2,\quad\sigma^4 = 2b\,e^7+2i\,(e^8 - a\,e^7).$$
\item[(v)] Starting from $J^4_{r5}$, consider first
$$\sigma^1 = \nu^1 + i \nu^2,\quad \sigma^2 = \nu^3 + i \nu^4,\quad \sigma^3 = \nu^5 + i \nu^6,\quad \sigma^4 = \nu^7 + i \nu^8,$$ and let us denote $z_0 = a+ib$.
With these notations, it is immediate to see that:
$$d\nu^1=\cdots = d\nu^6=0,$$ $$d\nu^7 = (1-a)(\nu^{13} + \nu^{24}) - b\, (\nu^{23} - \nu^{14}),$$ $$d\nu^8 = -b\,(\nu^{13} + \nu^{24}) + (1+a) (\nu^{23} - \nu^{14}) - 2\,\nu^{56}.$$

Now, we can clear $\nu^{13} + \nu^{24}$ and $\nu^{23} - \nu^{14}$ in the equations above.  Concretely:
$$\nu^{13} + \nu^{24} = \frac{1}{1-|z_0|^2} \left[ d((1+a)\nu^7 + b\,\nu^8) + 2\,b\,\nu^{56}\right],$$
$$\nu^{23} - \nu^{14} = \frac{1}{1-|z_0|^2} \left[ d(b\,\nu^7 + (1-a)\nu^8) + 2\,(1-a)\,\nu^{56}\right].$$
Observe that $(1+a)\nu^7 + b\,\nu^8$ and $b\,\nu^7 + (1-a)\nu^8$ are linearly independent one-forms (since $|z_0|\neq 1$).  Moreover, $b$ and $(1-a)$ cannot be zero simultaneously.  Without loss of generality, we can suppose that  $b\neq 0$ and define the new basis of real one-forms:
$$f^i = \nu^i,\quad i=1,\ldots, 5,$$
$$f^6 = -\frac{2\,b}{1-|z_0|^2} \nu^6,\quad f^7 = \frac{(1+a) \nu^7 + b\,\nu^8}{1-|z_0|^2},\quad f^8 = -\frac{b\,\nu^7 + (1-a)\nu^8}{1-|z_0|^2}.$$
In terms of this basis, we obtain the following structure equations:
$$df^i = 0,\, i=1,\ldots, 6,\quad df^7 = f^{13} + f^{24} + f^{26},\quad df^8 = f^{14} - f^{23} +\lambda\, f^{26},$$

where $\lambda = \dfrac{a-1}{b}\in\mathbb R$.  The last step is to show that $\lambda$ can be suppose to be zero.  For that, just consider $\theta:= \frac12\arctan \lambda$ and the new basis $\{e^i\}_{i=1}^8$ given by:
$$
e^1 = f^1\,\cos\theta - f^2\sin\theta,\qquad
e^2 = f^1\,\sin\theta + f^2\cos\theta,$$ $$
e^3 = f^3\,\cos\theta + f^4\sin\theta,\qquad
e^4 = -f^3\,\sin\theta + f^4\cos\theta,
$$
$$
e^5=f^5,\quad e^6=\sqrt{1+\lambda^2}\,f^6,\quad e^7=\frac{f^7 + \lambda\, f^8}{\sqrt{1+\lambda^2}},\quad e^8=\frac{-\lambda\, f^7 + f^8}{\sqrt{1+\lambda^2}}.
$$

%Finally, let us see how to simplify $\lambda$ whenever $\lambda\neq 0$: first,  there exists an isomorphism between $\mathfrak g_{\lambda}$ and $\mathfrak g_{1/\lambda}$, just taking the new basis:
%$$\widetilde e^1 = e^2,\quad \widetilde e^2 = e^1,\quad\widetilde e^3 = -e^3,\quad\widetilde e^6 = \lambda e^6,\quad\widetilde e^7 = e^8,\quad\widetilde e^8 = e^7.$$
%Secondly, there exists an isomorphism between $\mathfrak g_{\lambda}$ and $\mathfrak g_{-\lambda}$.  For that, take new basis as:
%$$\widetilde e^2 = -e^2,\quad \widetilde e^3 = -e^3,\quad\widetilde e^5 = -e^5,\quad\widetilde e^7 = -e^7.$$
\item[(vi)] Finally, for $J^4_{r6}$, let us express $z=a_1+i\,b_1$ and $z_1 = a_2+i\,b_2$.  Observe that the above conditions imply that $b_1b_2\neq 0$ and $\frac{a_1}{b_1} \neq \frac{a_2}{b_2}$.  For this last inequality, take into account that $\frac{a_1}{b_1} = \frac{a_2}{b_2}$ if and only if $\tan\,\alpha = \tan \,\theta$ and therefore $z=\pm z_1$, which is not possible. 

       To obtain the real Lie algebra, it is enough to consider the following real basis $\{e^1,\,\ldots, e^8\}$, where:
\begin{eqnarray*}
\sigma^1 = \left(\frac{a_1}{b_1} - \frac{a_2}{b_2} \right) e^1 + i\, e^2,&\qquad &
\sigma^2 = \frac{1}{b_1}e^3 + i\, e^4,\\[5pt]
\sigma^3 = \frac{1}{b_2}e^5 + i\, e^6,&\qquad &
\sigma^4 = 2\,e^8 -2 i\, \left[\left(\frac{a_1}{b_1} - \frac{a_2}{b_2} \right) e^7 + \frac{a_2}{b_2} e^8\right].
\end{eqnarray*}
\end{itemize}
\end{proof}

%%%%%%%%%%%%%%%%%%%%%%%%%%%%%%%%%%%%%%%%%%%%%%%%%%%%%%%%%%%%%%%%%%%%%%%%
\subsubsection{The singular non-Hermitian case of maximal type in dimension $8$}
In this section we will consider 1-abelian complex structures $J_M$ whose matrices $M$ are non-Hermitian and whose rank is not maximal, i.e. the matrix $M$ will have rank $r<3$. Recall that we are interested in singular matrices $M$ of maximal type since they provide new complex structures.  According to Lemma~\ref{lemma-rango-maximal}, $3>r\geq 2$, so the only possibility is $r=2$.  Moreover, by definition of maximal type, we are forced to have $m_1=m_2=1$ and therefore, $M_{(1)}$ appearing in $M^{(iv)}$ (see~\eqref{reducedform}) has dimension 1 (see Remark~\ref{rmk:singular-m}).  The following result gives us the possible matrices $M$:

\begin{lemma}\label{lema-block8-sing}
Let $(\mathfrak{g}, J_M)$ be an $8$-dimensional NLA with a $1$-abelian complex structure where $M$ is a singular non-Hermitian matrix of maximal type.  Then, there exists a basis $\{\tau^i\}_{i=1}^4$ of $\mathfrak{g}^{1,0}$ for which matrix $M$ is one of the following:
\begin{itemize}
\item[(i)] $M=\begin{pmatrix}
    A & 0 & 0\\
    0 & 0 & 1\\
    0 & 0 & 0
\end{pmatrix},$ where $A\in \mathbb C^*$. 
\item[(ii)] $M=\begin{pmatrix}
    0 & 1 & 0\\
    0 & 0 & 1\\
    0 & 0 & 0
\end{pmatrix}.$
\end{itemize}
\end{lemma}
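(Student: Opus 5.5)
We work with $M\in M_3(\mathbb C)$ of rank $r=2$ (forced by Lemma~\ref{lemma-rango-maximal} and the preceding discussion). Maximal type means $m_1=m_2=1$. By Remark~\ref{rmk:singular-m}, the block $M_{(1)}$ in the *congruence reduced form has size $2r-n=1$, since here $n=3$. So the plan is to run the regularization algorithm of Section~\ref{sec:reg-alg} on $M$ once. Steps 1--4 produce a matrix *congruent to $M$ of the form
$$M^{(iv)}=\begin{pmatrix} M_{(1)} & B & 0\\ 0 & 0 & 1\\ 0&0&0\end{pmatrix},\qquad M_{(1)}=(A),\ B=(b)\in\mathbb C .$$
The $1$ in position $(2,3)$ is the normalized block $E=I_{m_2}$; since $m_1=m_2$, there is no extra zero column. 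By Lemma~\ref{equivalence}, a *congruence is realized by a change $\omega=\sigma P$ of the first $n-1$ forms, so it suffices to bring $M^{(iv)}$ to one of the two listed shapes by further *congruences.

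We then split according to the three possibilities for the $1\times1$ block $M_{(1)}$ in the algorithm.

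If $A\neq 0$, we are in Case~3. The auxiliary matrix $S$ there (adding $-A^{-1}b$ times the first column to the second), followed by Step~4 to kill the reappearing entry in position $(2,1)$, yields $M^{(v)}_3=(A)\oplus J_2(0)$. This is exactly form (i), with $A\in\mathbb C^*$.

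If $A=0$, we are in Case~2 with $m_3=1$. Here $B=(b)$ must be nonzero: otherwise the first row of $M^{(iv)}$ vanishes, and together with the third row this gives rank $\le 1$, contradicting $r=2$. Equivalently, this is the rank condition $n-m_1-m_2$ on $[M_{(1)}\ B]$ in Definition~\ref{reducedcong}. The normalization of Case~2 then rescales $b$ to $1$ and re-fixes the $(2,3)$ entry. Concretely, a diagonal *congruence $\mathrm{diag}(d_1,d_2,d_3)$ multiplies the $(1,2)$ entry by $d_1\bar d_2$ and the $(2,3)$ entry by $d_2\bar d_3$, so both can be made $1$. This gives \eqref{A=0}, which is form (ii), namely $J_3(0)$.

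The only delicate point is that the algorithm genuinely terminates in these two shapes. We must check that the Step~4 correction in Case~3 does not reintroduce a nonzero $B$. It does not, because the correction only modifies the first column using the third column, whose only nonzero entry is in row 2. We must also justify $b\neq0$ when $A=0$, which is the rank argument above. Non-Hermitianity plays no role beyond excluding nothing: form (i) is non-Hermitian automatically because of the $J_2(0)$ block. Finally, uniqueness of the resulting class is supplied by Theorem~\ref{important}, though the lemma only asserts existence.
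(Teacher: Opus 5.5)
Your proposal is correct and follows the paper's own proof: you reduce to $M^{(iv)}$ with $m_1=m_2=1$, then split on $A\neq 0$ (Case~3, giving $(A)\oplus J_2(0)$) versus $A=0$ (Case~2, giving $J_3(0)$). Your rank argument for $b\neq 0$ and the explicit diagonal *congruence fill in details the paper leaves implicit. The only slip is the phrase ``three possibilities'' for $M_{(1)}$: for a $1\times 1$ block, only the two cases you actually treat can occur.
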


\begin{proof}
As we have explained before, $m_1=m_2=1$ and the matrix~\eqref{reducedform} is given by $$M^{(iv)}=\begin{pmatrix}
    A & B & 0\\
    0 & 0 & 1\\
    0 & 0 & 0
\end{pmatrix},$$ where $A, B \in M_1(\mathbb{C})$.  Now, $A$ can be a regular or a singular matrix, i.e. $A\neq 0$ or $A=0$.  If $A\neq 0$, then we can take $B=0$ (see \eqref{B=0}) obtaining case (i).  On the other hand, if $A=0$, by \eqref{A=0}, $B$ can be normalize to be 1, getting case (ii). Note that this case corresponds to completely singular matrices.
\end{proof}

As in the previous cases, in order to classify complex structures up to equivalence, we have to take into account the rescaling condition.  

\begin{theorem}\label{Th-dim8-rg2}
    Let $(\mathfrak{g}, J_M)$ be an $8$-dimensional NLA with a $1$-abelian complex structure defined by a singular non-Hermitian matrix $M$ of maximal type.  Then there exists a basis $\{\sigma^i\}_{i=1}^4$ of $\mathfrak{g}^{1,0}$ such that the complex structure equations are $d\sigma^1=d\sigma^2=d\sigma^3=0$ and one of the following:
\begin{itemize}
\item[(i)] $J^4_{s1}: \quad 
            d\sigma^4=\sigma^{1\bar 2}+\sigma^{2\bar 3}$ ;\\[-5pt]
\item[(ii)] $J^4_{s2}: \quad  d\sigma^4=\sigma^{1\bar 1}+\sigma^{2\bar 3}$.
\end{itemize}
\end{theorem}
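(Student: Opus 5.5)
The plan is to start from Lemma~\ref{lema-block8-sing}. It gives a basis $\{\tau^i\}_{i=1}^4$ of $\mathfrak g^{1,0}$ in which $d\tau^4=\tau M\overline\tau^T$, with $M$ one of the two matrices (i) or (ii) listed there. We then only need to use the rescaling freedom of Lemma~\ref{equivalence}: $\widetilde M=c\,PMP^*$ with $c\in\mathbb C^*$ and $P\in GL(3,\mathbb C)$. The two cases are treated separately.

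\emph{Case (ii).} Here
\[
d\tau^4=\tau^{1\bar2}+\tau^{2\bar3}.
\]
This is exactly $J^4_{s1}$, so we take $\sigma^i=\tau^i$.

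\emph{Case (i).} Here $M=(A)\oplus J_2(0)$ with $A\in\mathbb C^*$, so
\[
d\tau^4=A\,\tau^{1\bar1}+\tau^{2\bar3}.
\]
The task is to remove the parameter $A$. Write $A=|A|e^{i\alpha}$. The natural choice is the diagonal change
\[
\tau^1=|A|^{-1/2}\sigma^1,\qquad \tau^2=a\,\sigma^2,\qquad \tau^3=b\,\sigma^3,\qquad \tau^4=e^{i\alpha}\sigma^4 ,
\]
with $a,b\in\mathbb C^*$ still to be fixed. Substituting, $e^{i\alpha}d\sigma^4=e^{i\alpha}\sigma^{1\bar1}+a\bar b\,\sigma^{2\bar3}$. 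We therefore need $a\bar b=e^{i\alpha}$, which holds for instance with $a=e^{i\alpha}$, $b=1$. This gives $d\sigma^4=\sigma^{1\bar1}+\sigma^{2\bar3}$, which is $J^4_{s2}$.

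The key point in this case is that the $J_2(0)$ block $\tau^{2\bar3}$ carries the independent phase $a\bar b$. This extra phase is what allows the argument of $A$ to be absorbed into the rescaling constant $c$ coming from $\sigma^4$. For the $1\times1$ block alone, *congruence only preserves the argument of $A$, so without this freedom the phase could not be removed.

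Finally, we must confirm that $J^4_{s1}$ and $J^4_{s2}$ are genuinely different classes. By Theorem~\ref{important}, the existence and size of a regular part $M_{(\tau)}$ is a *congruence invariant. This property is unchanged by multiplying by the scalar $c$, as in the proof of the Lemma on block counts. $J^4_{s1}$ is completely singular, while $J^4_{s2}$ has a $1\times1$ regular part, so the two structures are inequivalent.

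The only step requiring any thought is the phase absorption in case (i); the rest is direct substitution.
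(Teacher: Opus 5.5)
Your proof is correct and follows the paper's route. Case (ii) of Lemma~\ref{lema-block8-sing} is already $J^4_{s1}$, and case (i) is normalized by a diagonal change of basis that combines the rescaling of the last form with the free scalar $a\bar b$ in the $J_2(0)$ block; the paper does this in one step with $\sigma^2=\omega^2/A$ and $\sigma^4=\omega^4/A$, rather than splitting $A$ into modulus and phase. Your closing non-equivalence argument via the invariants $m_j$ is correct but goes beyond what the statement asks.
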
  

\begin{proof}
Equations for $J^4_{s1}$ follow directly from matrix $M$ in Lemma~\ref{lema-block8-sing}, case (ii).  For $J^4_{s2}$, starting from $M$ in Lemma~\ref{lema-block8-sing}, case (i), we obtain $d\omega^4 = A\omega^{1\bar1} + \omega^{2\bar3}.$  Now, the new basis $\{\sigma^i\}_{i=1}^4$ given by
$$\sigma^1 = \omega^1,\quad \sigma^2 = \frac{1}{A}\omega^2,\quad \sigma^3 = \omega^3,\quad \sigma^4 = \frac{1}{A}\omega^4,$$ provides the simplified complex equations. 
\end{proof}

Regarding real Lie algebras, the following result holds:

\begin{theorem}
Let $(\mathfrak{g}, J_M)$ be an $8$-dimensional NLA with a $1$-abelian complex structure defined by a singular non-Hermitian matrix $M$ of maximal type.  Then the Lie algebra $\mathfrak{g}$ is isomorphic to one in the following list where in all cases $\{e^1,\ldots, e^8\}$ is a basis of 1-forms where $e^j,\, j=1,\ldots, 6$ are closed:
\begin{itemize}
\item[(i)] $\mathfrak{g}^8_{s1}: \quad de^7 = e^{13} + e^{24},\quad 
de^8 = e^{36} + e^{45}$;\\[-5pt]
\item[(ii)] $\mathfrak{g}^8_{r5}$.
\end{itemize}
\end{theorem}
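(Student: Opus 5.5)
Starting from the two families of Theorem~\ref{Th-dim8-rg2}, I will compute the real structure equations directly by splitting each $\sigma^j$ into real and imaginary parts, as was done in Theorem~\ref{teo_algebras-regular}. Write $\sigma^j=\nu^{2j-1}+i\,\nu^{2j}$ for $j=1,2,3$ and $\sigma^4=\nu^7+i\,\nu^8$. Then the real and imaginary parts of $d\sigma^4$ give $d\nu^7,d\nu^8$ as explicit combinations of the $\nu^{ab}$ with $a,b\le 6$, while $\nu^1,\dots,\nu^6$ are closed.

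For $J^4_{s1}$ we have $d\sigma^4=\sigma^{1\bar2}+\sigma^{2\bar3}$. Expanding $\sigma^{1\bar 2}$ gives $(\nu^{13}+\nu^{24})+i(\nu^{23}-\nu^{14})$, and $\sigma^{2\bar3}$ is analogous with indices shifted by $2$. Both $d\nu^7$ and $d\nu^8$ are therefore sums of two ``complex-bilinear'' pieces. The target $\mathfrak g^8_{s1}$ is $de^7=e^{13}+e^{24}$, $de^8=e^{36}+e^{45}$, where each differential is a single such piece. The plan is to take suitable complex combinations, i.e.\ to rechoose real bases of the $\sigma^j$ up to conjugation and rotation, so that the real part isolates $\sigma^{1\bar2}$-type terms. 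In practice I would guess an explicit basis in the style of the paper, for instance $\sigma^1=e^1+ie^2$, $\sigma^2=e^3\mp ie^4$, $\sigma^3=e^6\pm ie^5$ with suitable signs, and $\sigma^4$ a real-linear combination of $e^7,e^8$ (possibly with a factor $2$). I would then verify the identity $d\sigma^4=\sigma^{1\bar2}+\sigma^{2\bar3}$ by direct expansion. If a single rotation does not separate the two pieces, I would instead solve for $d$-exact combinations as in case (v) of Theorem~\ref{teo_algebras-regular}, writing $e^7,e^8$ as linear combinations of $\nu^7,\nu^8$ and absorbing cross terms into the closed forms.

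For $J^4_{s2}$ we have $d\sigma^4=\sigma^{1\bar1}+\sigma^{2\bar3}$. Here $\sigma^{1\bar1}=-2i\,\nu^{12}$ is purely imaginary, while $\sigma^{2\bar3}=(\nu^{35}+\nu^{46})+i(\nu^{45}-\nu^{36})$. Hence $d\nu^7=\nu^{35}+\nu^{46}$ and $d\nu^8=\nu^{45}-\nu^{36}-2\nu^{12}$. The target $\mathfrak g^8_{r5}$ is $de^7=e^{13}+e^{24}+e^{56}$, $de^8=e^{14}-e^{23}$. The identification is a relabelling: swap the roles of $e^7$ and $e^8$ (up to sign), send the pair $(\nu^3,\nu^4,\nu^5,\nu^6)$ to $(e^1,e^2,e^3,e^4)$ in a suitable order with sign changes, and rescale $\nu^1$ or $\nu^2$ to absorb the factor $-2$ so that $-2\nu^{12}$ becomes $e^{56}$. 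Concretely I would try $e^1=\nu^3$, $e^2=\nu^4$, $e^3=-\nu^6$, $e^4=\nu^5$, $e^5=\nu^1$, $e^6=-2\nu^2$, $e^7=\nu^8$, $e^8=\pm\nu^7$, and adjust the signs after checking.

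The main obstacle is the first case. There both real components mix the $\sigma^{1\bar2}$ and $\sigma^{2\bar3}$ contributions, and finding a real basis that decouples them into $e^{13}+e^{24}$ and $e^{36}+e^{45}$ requires the right choice of conjugations on $\sigma^2$ and $\sigma^3$. I expect conjugating the real basis of $\sigma^3$ and taking $e^7,e^8$ as $\nu^7\pm\nu^8$-type combinations to work. The second case is routine bookkeeping.
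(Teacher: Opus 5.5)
\textbf{Case $J^4_{s2}$ is fine; case $J^4_{s1}$ has a genuine gap.}

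Your treatment of $J^4_{s2}$ is correct and is essentially the paper's argument. With $\sigma^j=\nu^{2j-1}+i\nu^{2j}$, your substitution $e^1=\nu^3$, $e^2=\nu^4$, $e^3=-\nu^6$, $e^4=\nu^5$, $e^5=\nu^1$, $e^6=-2\nu^2$, $e^7=\nu^8$ gives $de^7=e^{13}+e^{24}+e^{56}$. The sign to take is $e^8=+\nu^7$, which gives $de^8=e^{14}-e^{23}$.

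The gap is $J^4_{s1}$. You do not produce a basis, and the guesses you propose provably cannot work. In your ansatz the real and imaginary parts of $\sigma^1,\sigma^2,\sigma^3$ span the coordinate planes $P_1=\langle e^1,e^2\rangle$, $P_2=\langle e^3,e^4\rangle$, $P_3=\langle e^5,e^6\rangle$. Moreover, $e^7,e^8$ are real combinations of $\mathrm{Re}\,\sigma^4,\mathrm{Im}\,\sigma^4$ modulo closed forms. Hence $de^7=\alpha\,\mathrm{Re}\,d\sigma^4+\beta\,\mathrm{Im}\,d\sigma^4$ for some real $\alpha,\beta$. Since $\sigma^{1\bar 2}$ has no $P_2\wedge P_3$ part, the $P_2\wedge P_3$-component of $de^7$ is $\mathrm{Re}\big((\alpha-i\beta)\,\sigma^{2\bar 3}\big)$. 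This vanishes only for $\alpha=\beta=0$, because $\sigma^2\wedge\bar\sigma^3$ and its conjugate $\bar\sigma^2\wedge\sigma^3$ are linearly independent. So $de^7$ can never equal $e^{13}+e^{24}$, whatever signs you pick, whether or not you conjugate $\sigma^3$, and whatever $GL(2,\mathbb R)$ change you apply to $(\nu^7,\nu^8)$.

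Your fallback of ``absorbing cross terms into the closed forms'' is not a valid operation either. Adding closed $1$-forms to $e^7,e^8$ does not change their differentials. The only remaining freedom is a change of basis of the closed $1$-forms, and the argument above shows that this change must mix the real planes of $\sigma^1$ and $\sigma^3$.

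\textbf{The missing idea is exactly that mixing.} Write $\sigma^2=a+ib$ with $a,b$ real. Then
\[
d\sigma^4=\sigma^1\wedge\bar\sigma^2+\sigma^2\wedge\bar\sigma^3=(\sigma^1-\bar\sigma^3)\wedge a-i\,(\sigma^1+\bar\sigma^3)\wedge b .
\]
Set
\[
e^3=a,\quad e^4=b,\quad e^1-ie^6=\sigma^1-\bar\sigma^3,\quad e^5+ie^2=\sigma^1+\bar\sigma^3,\quad \sigma^4=e^7+ie^8 .
\]
These six real forms are independent, because $\sigma^1\pm\bar\sigma^3$ and their conjugates span the same space as $\sigma^1,\sigma^3$ and their conjugates. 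One then gets $d\sigma^4=(e^{13}+e^{24})+i\,(e^{36}+e^{45})$, which is $\mathfrak g^8_{s1}$. Solving back gives precisely the paper's basis:
\[
\sigma^1=\tfrac{e^1+e^5}{2}+i\,\tfrac{e^2-e^6}{2},\quad \sigma^2=e^3+ie^4,\quad \sigma^3=\tfrac{e^5-e^1}{2}-i\,\tfrac{e^2+e^6}{2},\quad \sigma^4=e^7+ie^8 .
\]
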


\begin{proof}
In case $J^4_{s1}$, we consider:
$$\sigma^1 = \left(\frac{e^1+e^5}{2}\right)+i\,\left(\frac{e^2-e^6}{2}\right),\qquad \sigma^2 = e^3+i\,e^4,$$
$$\sigma^3 = \left(\frac{e^5-e^1}{2}\right)-i\,\left(\frac{e^2+e^6}{2}\right),\qquad\sigma^4 = e^7+i\,e^8.$$

For $J^4_{s2}$, it suffices to consider the real basis $\{e^1\ldots, e^8\}$ defined as:
$$\sigma^1 = \frac{e^5}{2}+i\,e^6,\quad \sigma^2 = e^3+i\,e^4,\quad\sigma^3 =  e^2-i\,e^1,\quad\sigma^4 = e^8-i\,e^7.$$
In this case we obtain a nilpotent Lie algebra that has already appeared in the regular case (see Theorem~\ref{teo_algebras-regular}).
\end{proof}

All the results in dimension 8 are summarized in Table~\ref{Tabla-dim8}.  It is worth noticing that all the nilpotent Lie algebras have first Betti number equal to 6 except the Heisenberg one (see Propositions~\ref{prop:topo} and~\ref{prop:espectral}).  Moreover, this classification shows that all the complex structures $J_M$ derived from a particular block decomposition of *congruence of the matrix $M$ live in the same Lie algebra.

\begin{table}[h!]
\begin{tabular}{|l|c|l|}
\hline
Complex structure:&*congruence&Nilpotent Lie Algebra\\
$d\sigma^1 = d\sigma^2 = d\sigma^3 =0,\quad d\sigma^4 = \ldots$&blocks&\\
\hline
$J_{4,3,0}: \,\sigma^{1\bar 1}+\sigma^{2\bar 2} +\sigma^{3\bar 3}$&\multirow{2}{*}{Hermitian regular}&\multirow{2}{*}{$\mathfrak h_{4,0}: \begin{cases}
de^7 = 0,\\
de^8 = e^{12} + e^{34} +e^{56}.
\end{cases}$}\\
\cline{1-1}
$J_{4,2,0}:\, \sigma^{1\bar 1}+\sigma^{2\bar 2} -\sigma^{3\bar 3}$&&\\
\hline
\hline
$J^4_{r1}:\, \sigma^{1\bar 3}+\sigma^{2\bar 2}+\sigma^{3\bar 1}+i(\sigma^{3\bar 2}+\sigma^{2\bar 3})$&$e^{i\theta}\Delta_3$&$\mathfrak g^8_{r1}:\begin{cases}
de^7 = e^{13} + e^{24} + e^{56},\\
de^8 = e^{36} -e^{45}.
\end{cases}$\\
\hline
$J^4_{r2}:\,\sigma^{1\bar 2}+\sigma^{2\bar 1}+i\sigma^{2\bar 2}+\varepsilon\,\sigma^{3\bar 3},\, \varepsilon=\pm 1$&$e^{i\theta}\Delta_2 \oplus e^{i\theta}\Delta_1$&$\mathfrak g^8_{r2}: \begin{cases}
de^7 = e^{13} + e^{24} +e^{56},\\
de^8 = e^{34}.
\end{cases}$\\ 
\hline
$J^4_{r3}:\,\sigma^{1\bar 2}+\sigma^{2\bar 1}+i\sigma^{2\bar 2}+z\sigma^{3\bar 3},\quad |z|=1,$&$e^{i\theta_1}\Delta_2 \oplus e^{i\theta_2}\Delta_1$&$\mathfrak g^8_{r3}:\begin{cases}
de^7 = e^{13}+e^{24},\\
de^8 = e^{34} + e^{56}.
\end{cases}$\\
$z\neq \pm 1,\quad z = a+\,i\,b,\quad b\neq 0$&&  \\
%&$z = a+\,i\,b,\quad b\neq 0$&&\\
\hline
$J^4_{r4+}:\,\sigma^{1\bar 1}+ \sigma^{2\bar 2}+z\sigma^{3\bar 3},\quad |z|=1,\, z\neq \pm 1$&\multirow{3}{*}{$e^{i\theta_1}\Delta_1\oplus e^{i\theta_1}\Delta_1\oplus e^{i\theta_2}\Delta_1$}&\multirow{3}{*}{$\mathfrak g^8_{r4}:\begin{cases}
de^7 = e^{12},\\
de^8 = e^{34} +e^{56}.
\end{cases}$} \\
%&$|z|=1,\, z\neq \pm 1$&&\\
\cline{1-1}
$J^4_{r4-}:\,\sigma^{1\bar 1}- \sigma^{2\bar 2}+z\sigma^{3\bar 3},\quad |z|=1,$&
%$\Delta_1\oplus\Delta_1\oplus\Delta_1$
&\\
%$\mathfrak g^8_{r4}:\begin{cases}
%de^7 = e^{12},\\
%de^8 = e^{34} +e^{56}.
%\end{cases}$ \\
$\mathfrak{Im}(z)>0$&&\\
\hline
$J^4_{r5}:\,\sigma^{1\bar 2}+z_0\sigma^{2\bar 1}+\sigma^{3\bar 3},\quad |z_0|>1$&$H_2(\mu)\oplus e^{i\theta}\Delta_1$&$\mathfrak g^8_{r5}:\begin{cases}
de^7 = e^{13} + e^{24} + e^{56},\\
de^8 = e^{14} - e^{23},
\end{cases}$ \\ 
%&&$\text{ where } \lambda\in[0,1]$ \\ 
\hline
$J^4_{r6}:\,\sigma^{1\bar 1}+z\sigma^{2\bar 2}+z_1\sigma^{3\bar 3},\quad |z|=|z_1| =1 $&$e^{i\theta_1}\Delta_1\oplus e^{i\theta_2}\Delta_1\oplus e^{i\theta_3}\Delta_1$&$\mathfrak g^8_{r6}:\begin{cases}
de^7 = e^{12}+e^{34},\\
de^8 = e^{34} +e^{56}.
\end{cases}$ \\ 
$z\neq \pm 1;\,\, z_1 = e^{i\theta}, \theta\in(0, \pi), \, z\neq \pm z_1.$&& \\ 
%$z_1 = e^{i\theta}, \theta\in(0, \pi) \, z\neq \pm z_1.$&&\\
\hline
\hline
$J^4_{s1}: \sigma^{1\bar 2}+\sigma^{2\bar 3}$&Singular&$\mathfrak g^8_{s1}:\begin{cases}
de^7 = e^{13} + e^{24},\\
de^8 = e^{36} +e^{45}.
\end{cases}$\\
\hline
$J^4_{s2}: \sigma^{1\bar 1}+\sigma^{2\bar 3}$&Singular&$\mathfrak g^8_{r5}:\begin{cases}
de^7 = e^{13} + e^{24} + e^{56},\\
de^8 = e^{14} - e^{23}.
\end{cases}$\\
\hline
\end{tabular}
\caption{Nilpotent Lie algebras of dimension~8 admitting 1-abelian complex structures of maximal type.}\label{Tabla-dim8}
\end{table}

%%%%%%%%%%%%%%%%%%%%%%%%%%%%%%%%%%%%%%

\subsection{The general case}\label{sub:general}
Having established the classification by equivalence of 1-abelian complex structures on NLAs of dimensions 4, 6, and 8, we now turn our attention to the case of arbitrary dimension $2n$. The purpose of this section is to investigate and enumerate the total number of families of 1-abelian complex structures of maximal type occurring in a fixed dimension $2n$, giving a lower bound for it.  This number will depend on the number and type of possible blocks under *congruence, as given in Theorem~\ref{class}. However, our results studying the Hermitian case and the particular case in dimension~8 show that the number of blocks is less than or equal to the number of non-equivalent families of complex structures.

Let us define: 
\begin{eqnarray*}Ab_1(n)&:=&\#\{\text{families  of 1-abelian complex structures of maximal type}\\ &&\text{  for a $2n$-dimensional NLA}\},\quad n\geq 2.\\
B(n-1)&:=&\#\{\text{decompositions as direct sum of blocks } \Delta_i, \, H_{2i}(\mu), J_i(0)\\ &&\text{  for maximal type } M \in M_{n-1}(\mathbb{C})\},\quad n\geq 2.
\end{eqnarray*}

As we have explained, $Ab_1(n)\geq B(n-1)$.

\medskip

Using the results of Sections~\ref{sub:4}, ~\ref{sub:6} and ~\ref{sub:8}, we can compute explicitly some values of $Ab_1(n)$ and $B(n-1)$.

\medskip

For $n=2$:  Clearly, $Ab_1(2)= B(1) = 1$ and the complex structure is $J_{2,1,0}$.

\medskip

For $n=3$: The results appear in Table~\ref{Tabla-dim6-reducida}.  Concretely, $Ab_1(3)=6$ and $B(2) = 5$.  Moreover, we have $2$ families of 1-abelian complex structures $J_M$ for $M$ Hermitian ($J_{3,2,0}$ and $J_{3,1,0}$) but both of them have the same configuration of *congruence blocks; $3$ families for $M$ regular and non-Hermitian ($J_{r1}^3, J_{r2}^3$ and  $J_{r3}^3$), and only 1 for $M$ singular of maximal type ($J^3_{s1}$).

\medskip

For $n=4$: The results appear in Table~\ref{Tabla-dim8}.  We have that, $Ab_1(4)=11$ and $B(3) = 9$.  Observe that we have two Hermitian structures corresponding to the same block configuration and also the complex structures $J^4_{r4+}, J^4_{r4-}$ have the same type of blocks.

\medskip

Since we have completely studied the Hermitian case, using Remark~\ref{Hermitian-max} together with Theorem \ref{teo:Hermitian} we conclude that $J_M$ with $M$ Hermitian of maximal type provide 1 configuration for blocks but $\lfloor \frac{n+1}{2}\rfloor$ families of non-equivalent complex structures.  Therefore, we obtain that:
\begin{equation}\label{AB-B}
Ab_1(n)\geq \lfloor \frac{n+1}{2}\rfloor + (B(n-1)-1).
\end{equation}
%where
%\begin{eqnarray*}
%B^*(n-1)&:=&\#\{\text{decompositions as direct sum of blocks } \Delta_i, \, H_{2i}(\mu), J_i(0)\\ &&\text{  for non-Hermitian maximal type } M \in M_{n-1}(\mathbb{C})\},\quad n\geq 2.
%\end{eqnarray*}
%Observe that $B^*(n-1) = B(n-1)-1$.

\subsubsection{The regular non-Hermitian case}\label{sect:general-regular}
Our present goal is to determine the number $B(n-1)$ for regular matrices of dimension $n-1$. Observe that Lemma~\ref{lema: block-dim6} and Lemma~\ref{lema: block-dim8} provide this number for $n=3$ and $n=4$ respectively. As can be seen, this problem is inherently combinatorics-driven, since it reduces to counting the viable combinations of blocks $H_{2k}(\mu)$ and $\Delta_k$.

As a matter of notation, let us use at this point $n$ for the dimension of the matrix.
 %We can ask how many possible block decompositions (up to rearrangement) there exist for regular matrices of a given dimension $n$.
 Let us define:
\begin{eqnarray*}a(n)&:=&\#\text{ block decompositions of a regular matrix $M$ of order }n.
\end{eqnarray*}

This number depends on the spectral properties of $C(M)$, in particular, depends on the modulus of the eigenvalues.  So, let us define two intermediate numbers:
\begin{eqnarray*}b(n)&:=&\#\text{ block decompositions of a regular matrix $M$ of order $n$} \\ &&\text{where all the eigenvalues of $C(M)$ are of modulus 1}.
\end{eqnarray*}
\begin{eqnarray*}c(n)&:=&\#\text{ block decompositions of a regular matrix $M$ of order $n$}\\ &&\text{ where there exist eigenvalues of $C(M)$ which are not of modulus 1.}
\end{eqnarray*}

Clearly, $a(n) = b(n) + c(n)$. Let us calculate first $b(n)$ and then we will relate $c(n)$ and $b(m)$ for some $m<n$

\begin{lemma}\label{lemma13}
Let $\{\mu_k\}_{k=1}^r$ be the eigenvalues of $C(M)$.  Suppose that all of them are of modulus~1 and denote by $m_k$ their algebraic multiplicities.  Then:
\begin{equation}\label{eq:bn}
b(n) = \sum_{m_1+\cdots+m_r = n} p(m_1)\cdots p(m_r),
\end{equation}
where $p(m_i)$ is number of the possible decompositions of $m_i$ as sum of natural numbers, that is, the number of partitions of $m_i$.
\end{lemma}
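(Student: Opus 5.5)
The plan is to reduce the count of block decompositions to a count of Jordan forms of $C(M)$. When every eigenvalue has modulus one, the regular part in Theorem~\ref{class} contains no $H_{2k}(\mu)$ blocks, so $M$ is $*$congruent to a direct sum $\bigoplus_j \varepsilon_j e^{i\phi_j/2}\Delta_{n_j}$. By Lemma~\ref{simil-cosquare}(iv), the cosquare of this sum is similar to $\bigoplus_j J_{n_j}(e^{i\phi_j})$. Hence the block configuration of $M$, meaning which $\Delta$-sizes occur and which of them share a common eigenvalue, is exactly the data of the Jordan canonical form of $C(M)$ read up to relabelling of the distinct eigenvalues. The signs $\varepsilon_j$ and the specific angles $\phi_j$ are parameters inside a family, not part of the configuration being counted; this is consistent with the conventions used for $B(2)$ and $B(3)$ in Lemmas~\ref{lema: block-dim6} and~\ref{lema: block-dim8}.

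First, I fix the number $r$ of distinct eigenvalues $\mu_1,\dots,\mu_r$ of $C(M)$, all on the unit circle, with algebraic multiplicities $m_1,\dots,m_r$, so that $m_1+\cdots+m_r=n$. Second, I note that for a fixed eigenvalue $\mu_k$ the Jordan blocks $J_{n_{k,1}}(\mu_k),\dots,J_{n_{k,s}}(\mu_k)$ have sizes summing to $m_k$, as in Remark~\ref{rmk:Jordan}. Their unordered size list is an arbitrary partition of $m_k$, which gives $p(m_k)$ choices. Third, I check that every such choice is realised by some regular $M$: it suffices to take $B=\bigoplus_{k}\bigoplus_{l} e^{i\theta_k/2}\Delta_{n_{k,l}}$ with distinct $\theta_k$ and apply Lemma~\ref{simil-cosquare}(iv). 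Fourth, uniqueness of the decomposition in Theorem~\ref{HS} shows that distinct choices give distinct configurations. Since choices for different eigenvalues are independent, multiplying and summing over all multiplicity vectors $(m_1,\dots,m_r)$ yields \eqref{eq:bn}.

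The main obstacle is the bookkeeping in the outer sum, namely what the index $m_1+\cdots+m_r=n$ ranges over. The eigenvalues are unlabelled, so tuples differing by a permutation should describe the same configuration. I would therefore interpret the sum as running over multisets of multiplicities, that is over partitions of $n$ into parts $m_k$, with each part decorated by a partition of itself. To make this precise, I would check the formula against the case analysis of Lemma~\ref{lema: block-dim8} ($n=3$) and Lemma~\ref{lema: block-dim6} ($n=2$), confirming that the count agrees once the Hermitian configuration is included.

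A further subtlety arises when several eigenvalues carry the same multiplicity and the same inner partition. In that case the product $p(m_1)\cdots p(m_r)$ overcounts unordered data. I would handle this by ordering the eigenvalue classes lexicographically by the pair (multiplicity, partition), or by stating explicitly which convention the paper adopts. I would fix this convention before writing the final argument.
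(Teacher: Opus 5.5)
Your reduction is the paper's. A configuration is the Jordan type of $C(M)$ up to relabelling of its eigenvalues. An eigenvalue of multiplicity $m_k$ admits $p(m_k)$ Jordan types, and one multiplies and sums over multiplicity vectors. The paper's proof makes exactly this move (``each possible tuple $(m_1,\ldots,m_r)$ gives $p(m_1)\cdots p(m_r)$ options''). Example~\ref{ex:numero-bloques} shows that its outer sum runs over partitions of $n$: it lists $(1,2)$ but not $(2,1)$.

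The point you postponed is not a convention to settle later. It is exactly where \eqref{eq:bn} breaks, so no choice of convention turns your plan into a proof of the statement as written. Your diagnosis is also inverted. If two eigenvalues have the same multiplicity and the same inner partition, the product counts that configuration once, which is correct. The overcount occurs when they have the same multiplicity but different inner partitions, because both assignments are counted. Since $p(1)=1$, this first happens at $n=4$ with multiplicities $(2,2)$. The product gives $p(2)^2=4$, but $J_2(\mu_1)\oplus\mu_2 I_2$ and $\mu_1 I_2\oplus J_2(\mu_2)$ are the same configuration. Only $\{[2],[2]\}$, $\{[2],[1,1]\}$ and $\{[1,1],[1,1]\}$ remain. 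Your planned checks against Lemmas~\ref{lema: block-dim6} and~\ref{lema: block-dim8} ($n=2,3$) cannot detect this.

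Your remedy, ordering eigenvalue classes by the pair (multiplicity, partition), does count correctly. What it proves, however, is
\[
b(n)=\sum_{\lambda\vdash n}\ \prod_{j\ge 1}\binom{p(j)+e_j(\lambda)-1}{e_j(\lambda)},
\]
where $e_j(\lambda)$ is the number of parts of $\lambda$ equal to $j$. Equivalently, $b(n)$ is the coefficient of $x^n$ in $\prod_{k\ge1}(1-x^k)^{-p(k)}$, the number of multisets of partitions of total size $n$. This gives $b(1),\ldots,b(4)=1,3,6,14$, which agrees with \eqref{eq:bn} only for $n\le 3$. The paper's formula gives $b(4)=15$. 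Reading the sum over ordered tuples instead already fails at $n=3$, giving $8$.

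So the lemma itself must be corrected, not just its proof. Downstream, $a(4)=20$ rather than $21$ ($c(4)=6$ is unaffected). The bound derived in the final example for dimension $10$ then becomes $29$ rather than $30$. The other steps of your plan are fine: realisability via $\bigoplus_{k,l}e^{i\theta_k/2}\Delta_{n_{k,l}}$ with distinct $\theta_k$, and distinctness of configurations via the uniqueness in Theorem~\ref{HS}.
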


\begin{proof}
It is clear that $m_1+\cdots+m_r = n$ and $m_k\geq 1,\, \forall k$.  Now, if $W_k$ is the eigenspace associated to $\mu_k$, then its dimension $t_k$ can take values $t_k=1, 2,\ldots, m_k$. Moreover, $t_k$ determines also the number of Jordan blocks associated to the eigenvalue $\mu_k$. In other words, for a fixed value $1 \leq t_k \leq m_k$ there exist exactly $t_k$ Jordan blocks $J_{s_1}(\mu_k), \dots, J_{s_{t_k}}(\mu_k)$ whose sizes satisfy $$s_1 + \dots + s_{t_k} = m_k, \quad s_i \ge 1,$$and distinct choices of $(s_1, \dots, s_{t_k})$ (up to reordering) lead to non-similar Jordan matrices. Consequently, the number of Jordan forms compatible with $t_k$ blocks is precisely the number of partitions of $m_k$ into $t_k$ parts, and the total number of possible Jordan forms for $\mu_k$, with $1\leq t_k \leq m_k$, is$$\sum_{t_k=1}^{m_k} \#\{\text{partitions of } m_k \text{ into } t_k \text{ parts}\} = p(m_k).$$
So, each possible tuple $(m_1,\ldots, m_r)$ gives $p(m_1)\cdots p(m_r)$ options and the result follows.
\end{proof}

If $C(M)$ admit eigenvalues whose modulus is distinct of 1, these eigenvalues form pairs $(\mu, \bar \mu^{-1})$ and each pair generates a hyperbolic block, whose dimension depends on the multiplicity of $\mu$.  %In the next result we denote the number of hyperbolic blocks that can we have for each $n$ by $k$ and, the number of partitions of $k$ will be denoted by $r_k$, where we have counted how many different pairs (with multiplicity) can we have of each $n$:

\begin{lemma}\label{lemma14}
Suppose that $C(M)$ have pairs of eigenvalues with modulus different than $1$.  Then:
\begin{equation}\label{eq:cn}
c(n)=\sum_{k=1}^{\lfloor n/2\rfloor} b(k)\,b(n-2k).
\end{equation}
As notation, since $c(2) = 1$ and $b(1) = 1$, we will establish $b(0) = 1$. Moreover, we declare $c(0) = 0$ for completeness.
\end{lemma}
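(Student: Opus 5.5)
The plan is to split the Jordan form of $C(M)$ into two parts by the modulus of the eigenvalues, count each part separately, and then sum over the possible size of the part coming from eigenvalues of modulus different from $1$.

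\emph{Splitting.} By Lemma~\ref{eigenvalue} and Remark~\ref{remark-eigenvalues}, the spectrum of $C(M)$ is a disjoint union of two pieces.
\begin{itemize}
\item[(1)] Eigenvalues $e^{i\phi}$ of modulus~$1$.
\item[(2)] Pairs $(\mu,\bar\mu^{-1})$ with $|\mu|>1$.
\end{itemize}
For the pairs, Lemma~\ref{simil-cosquare}~(i) and step~(a) of the regular algorithm show more: for each Jordan block $J_s(\mu)$ there is a matching block $J_s(\bar\mu^{-1})$, and the two together give one block $H_{2s}(\mu)$. The matching holds because $C(M)^*$ is similar to $C(M)^{-1}$; indeed $C(M)^{-1}=M^{-1}M^*$ is conjugate to $(M^*)^{-1}M\,$, which gives the stated relation. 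Hence the hyperbolic part of the block decomposition in Theorem~\ref{class} is determined by the Jordan structure of $C(M)$ on the eigenvalues $\mu$ with $|\mu|>1$ only. Let $k$ be the total algebraic multiplicity of those eigenvalues. The hyperbolic part then occupies size $2k$, and the modulus-one part occupies size $n-2k$. The hypothesis that eigenvalues of modulus $\neq 1$ exist forces $k\ge1$, and $2k\le n$ forces $k\le\lfloor n/2\rfloor$.

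\emph{Counting the two parts.} For fixed $k$, the two parts can be chosen independently, so the count factors as a product.
\begin{itemize}
\item[(1)] Modulus-one part: this is exactly the situation of Lemma~\ref{lemma13} in size $n-2k$, so it contributes $b(n-2k)$ configurations. We use the convention $b(0)=1$ for the case $n=2k$, where this part is empty.
\item[(2)] Hyperbolic part: we rerun the argument of Lemma~\ref{lemma13} with the distinct eigenvalues $\mu_1,\dots,\mu_r$ of modulus $>1$ and multiplicities $m_1+\cdots+m_r=k$. The Jordan structure at each $\mu_j$ is again a partition of $m_j$, and each part $s$ gives one block $H_{2s}(\mu_j)$. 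This yields $\sum_{m_1+\cdots+m_r=k}p(m_1)\cdots p(m_r)=b(k)$ configurations, which is formula \eqref{eq:bn}.
\end{itemize}
Summing $b(k)\,b(n-2k)$ over $1\le k\le\lfloor n/2\rfloor$ gives \eqref{eq:cn}. As a sanity check, the case $n=2$ gives $b(1)b(0)=1=c(2)$, matching case (iii) of Lemma~\ref{lema: block-dim6}.

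\emph{Main obstacle.} The delicate step is showing that the hyperbolic count really equals $b(k)$, i.e.\ that no extra freedom or identification appears.
\begin{itemize}
\item[(1)] Choosing $\mu$ rather than $\bar\mu^{-1}$ as the label of each pair is forced by the normalization $|\mu|>1$, so the labels are not double counted.
\item[(2)] The signs $\delta_i$ of the hyperbolic blocks disappear by Lemma~\ref{simil-cosquare}~(ii), so they add no new configurations.
\item[(3)] The signs $\varepsilon_j$ of the $\Delta$-blocks must be treated in exactly the same way as in the definition of $b$, whatever convention Lemma~\ref{lemma13} adopts for them. Then the identification of the modulus-one factor with $b(n-2k)$ is literal.
\end{itemize}
Once these three points are checked against the uniqueness statement of Theorem~\ref{HS}, the formula follows.
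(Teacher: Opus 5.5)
Your proposal is correct and follows essentially the same route as the paper's proof. Both split the spectrum of $C(M)$ by modulus and let $k$ be the total multiplicity of the eigenvalues with $|\mu|>1$. The hyperbolic part is then counted by $b(k)$ via the correspondence $J_s(\mu)\leftrightarrow H_{2s}(\mu)$ of Lemma~\ref{simil-cosquare}, the modulus-one remainder by $b(n-2k)$, and one sums over $1\le k\le\lfloor n/2\rfloor$.

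Your explicit argument that the Jordan blocks at $\mu$ and $\bar\mu^{-1}$ come in matching pairs is a useful addition that the paper leaves implicit, but it contains a slip. $(M^*)^{-1}M$ is $C(M)$ itself, and $C(M)^{-1}$ is in general not similar to $C(M)$. The correct identity is $C(M)^*=M^*M^{-1}=M\,C(M)^{-1}M^{-1}$.
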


\begin{proof}
The key point is to analyze the number of pairs $(\mu_i,\bar \mu_i^{-1})$ counted by multiplicity that can appear inside a matrix of size $n$ and determine the left space for eigenvalues of modulus 1. Let $\mu_1,\dots,\mu_p$ be the representatives of the pairs $(\mu_i,\bar \mu_i^{-1})$ with $|\mu_i|>1$, and let $s_i$ be the algebraic multiplicity of $\mu_i$.  If we define $k:=s_1+\dots+s_p$, then $k \leq \lfloor n/2\rfloor$ and the remaining space, of dimension $n-2k$, is available for eigenvalues of modulus 1, which accounts for the factor $b(n-2k)$ in~\eqref{eq:cn}.  The factor $b(k)$ appears in a similar way but considering now the Jordan canonical form for the eigenvalues with modulus greater than $1$, taking into account that by Lemma \ref{simil-cosquare}, each Jordan block $J_s(\mu_i)$ corresponds, via the construction of the algorithm in Section \ref{section3.1}, to a hyperbolic block $H_{2s}(\mu_i)$. Finally, we must sum the product $b(k)b(n-2k)$ over each value of $k=1,\dots,\lfloor n/2\rfloor$.

%By Lemma \ref{simil-cosquare}, each Jordan block $J_s(\mu_i)$ corresponds, via the construction of the algorithm in Section \ref{section3.1}, to a hyperbolic block $H_{2s}(\mu_i)$. Fixing the algebraic multiplicity $s_i$ of $\mu_i$, by the same argument used in the proof of Lemma \ref{lemma13}, the number of non-similar Jordan forms for the eigenvalue $\mu_i$ is $p(s_i)$. Therefore, the number of configurations associated to a fixed tuple $(s_1,\dots,s_p)$ with $s_1+\dots+s_p=k$ is $p(s_1)\dots p(s_p)$, and the total number of configurations possible for the joint multiplicity $k$, summing over all partitions of $k$ into $p$ distinct representatives (with $p$ variable), is $$\sum_{s_1+\dots+s_p=k}p(s_1)\dots p(s_k)=b(k).$$
\end{proof}

Using the results of the previous lemmas, we can state the main result of this section:

\begin{proposition}
The number of block decompositions of $M\in GL(n,\mathbb C)$ is given by:
\begin{equation}\label{eq:an}
a(n) = b(n) + \sum_{k=1}^{\lfloor n/2\rfloor} b(k)\,b(n-2k),\quad n\geq 1;
\end{equation}
and $a(0) = 1$, where $b(n) = \displaystyle\sum_{m_1+\cdots+m_r = n} p(m_1)\cdots p(m_r)$ and $p(m)$ is the number of partitions of $m$.
\end{proposition}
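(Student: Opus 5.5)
The plan is to assemble the formula from the decomposition $a(n)=b(n)+c(n)$ noted before Lemma~\ref{lemma13}, and then substitute Lemma~\ref{lemma13} for $b(n)$ and Lemma~\ref{lemma14} for $c(n)$.

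First I will justify the dichotomy. By Theorem~\ref{HS}, a regular $M\in GL(n,\mathbb C)$ is *congruent to a direct sum of Type I and Type II blocks, with no Type 0 blocks. By the algorithm of Section~\ref{section3.1} together with Lemma~\ref{simil-cosquare}, the multiset of block types and sizes is read off from the Jordan form of $C(M)$:
\begin{itemize}
\item a Jordan block $J_s(e^{i\phi})$ gives a block $\pm e^{i\phi/2}\Delta_s$;
\item a pair of blocks $J_s(\mu)\oplus J_s(\bar\mu^{-1})$ with $|\mu|>1$ gives $H_{2s}(\mu)$.
\end{itemize}
Either every eigenvalue of $C(M)$ has modulus $1$, or at least one pair $(\mu,\bar\mu^{-1})$ with $|\mu|\neq 1$ occurs (Lemma~\ref{eigenvalue} and Remark~\ref{remark-eigenvalues}). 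These two situations are mutually exclusive and exhaust all regular matrices. Moreover, the number of $\Delta$ and $H$ blocks is a *congruence invariant by Lemma~\ref{cosquaresim}, so no decomposition is counted in both classes. This gives $a(n)=b(n)+c(n)$.

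Next, I substitute the two lemmas. Lemma~\ref{lemma13} gives $b(n)=\sum_{m_1+\cdots+m_r=n}p(m_1)\cdots p(m_r)$. Lemma~\ref{lemma14} gives $c(n)=\sum_{k=1}^{\lfloor n/2\rfloor}b(k)\,b(n-2k)$. The convention $b(0)=1$ handles the case $n=2k$, where no modulus-one part remains. Adding the two expressions yields \eqref{eq:an} for $n\ge 1$.

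For $n=0$, the only decomposition is the empty direct sum. So $a(0)=1$ is a convention consistent with $b(0)=1$ and $c(0)=0$.

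As sanity checks I will verify the small cases against the earlier results:
\begin{itemize}
\item $a(1)=b(1)=1$.
\item $a(2)=b(2)+b(1)b(0)=p(2)+p(1)^2+1=4$. This matches Lemma~\ref{lema: block-dim6}, whose cases (i)--(iv) count four configurations, with the Hermitian one included in $b$.
\item $a(3)=b(3)+b(1)b(1)$. Here $b(3)=p(3)+2p(2)p(1)+p(1)^3=3+4+1=8$. Counting Lemma~\ref{lema: block-dim8} gives $7$, and adding the diagonalizable single-eigenvalue case gives $a(3)=8+1$.
\end{itemize}

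The main obstacle is making the counting in $b(n)$ precise: one must decide whether the summation in \eqref{eq:bn} is over ordered tuples or unordered multisets of multiplicities. Since the eigenvalues are unlabelled, the unordered reading seems natural, but the check $b(3)=8$ above only works with the ordered count ($2p(2)p(1)$ counts $(2,1)$ and $(1,2)$ separately). I would first try to reconcile this with Lemma~\ref{lema: block-dim8}, interpreting its case (ii) versus (iv)/(v) accordingly. If a mismatch persists, I will state explicitly the convention under which Lemma~\ref{lemma13} is meant, and then the proposition follows formally from the two lemmas.
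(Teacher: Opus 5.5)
Your overall route is the paper's own: there the proposition is an immediate consequence of $a(n)=b(n)+c(n)$ together with Lemmas~\ref{lemma13} and~\ref{lemma14}, and nothing further is argued. The real problem is your $n=3$ check and the convention you draw from it.

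Lemma~\ref{lema: block-dim8} already exhausts $a(3)$, so there is nothing to add. Its seven items include the diagonalizable single-eigenvalue case (item (i), by Remark~\ref{hermHS}) and the hyperbolic case (item (vi)). The items match the formula as follows: one eigenvalue of multiplicity $3$ gives $p(3)=3$ configurations, items (i), (ii), (iii); multiplicities $\{2,1\}$ give $p(2)p(1)=2$, items (iv) and (v); three simple eigenvalues give item (vii); and $c(3)=b(1)b(1)=1$ is item (vi). Thus $b(3)=6$ and $a(3)=7$, exactly as in Example~\ref{ex:numero-bloques}.

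Your ordered count $2p(2)p(1)$ counts items (iv) and (v) twice. ``One double and one simple eigenvalue'' is a single spectral configuration, because the eigenvalues of $C(M)$ carry no labels. So the sum in \eqref{eq:bn} runs over partitions of $n$, not compositions. The fallback you propose, declaring the ordered convention, would make the proposition false already at $n=3$, giving $a(3)=9$.

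If you want to justify \eqref{eq:bn} rather than merely cite Lemma~\ref{lemma13}, push the same ``unlabelled eigenvalues'' observation one step further. When a multiplicity $m\ge 2$ repeats, the factor $p(m)^j$ still overcounts. Take $n=4$ with multiplicities $(2,2)$: the Jordan forms $J_2(\eta_0)\oplus J_1(\eta_1)\oplus J_1(\eta_1)$ and $J_1(\eta_0)\oplus J_1(\eta_0)\oplus J_2(\eta_1)$ give the same block configuration, namely one $\Delta_2$ plus two $\Delta_1$ sharing a different phase. So there are $\binom{p(2)+1}{2}=3$ configurations, not $p(2)^2=4$.

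The exact count is the number of multisets of partitions of total size $n$, i.e.\ the coefficient of $x^n$ in $\prod_{k\ge 1}(1-x^k)^{-p(k)}$. For $n=1,2,3,4$ this is $1,3,6,14$. It agrees with \eqref{eq:bn} for $n\le 3$, so every check against the dimension $6$ and $8$ lemmas goes through. However, it gives $b(4)=14$ and $a(4)=20$ rather than the $15$ and $21$ of Example~\ref{ex:numero-bloques}.

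The splitting $a=b+c$ and the shape of \eqref{eq:cn} survive this correction. The hyperbolic part is counted by the same function $b$, since its Jordan data at $\bar\mu^{-1}$ mirrors that at $\mu$.
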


\begin{example}\label{ex:numero-bloques}
Let us compute $a(n)$ for $n\leq 4$:
\begin{itemize}
\item $n=1$: The only possibility is an eigenvalue of modulus $1$ and algebraic multiplicity $1$, so the only tuple for algebraic multiplicities is $(m_1) = (1)$.  Now, $p(1) = 1$ and $b(1) = 1$.  Therefore, $a(1) = 1$.
\item $n=2$: We can have two options: two eigenvalues of modulus $1$ (this option gives us $b(2)$) or a pair with modulus different from $1$ (from which we will compute $c(2)$). Clearly, $c(2) = b(1)b(0)=1$.  For $b(2)$ we need first to determine the possible tuples $(m_1,m_2)$ such that $m_1+m_2 = 2$.  We obtain $(m_1) = 2$ and $(m_1,m_2) = (1,1)$.  Now, using~\eqref{eq:bn}:
$$b(2) = p(2) + p(1)p(1) = 2 + 1 = 3.$$
Finally: $a(2) = b(2) + c(2) = 3+1 = 4$, which agrees with Lemma~\ref{lema: block-dim6}.
\item $n=3$:  In this case, starting with $b(3)$, the possible tuples for the multiplicity of eigenvalues of modulus $1$ are:
$$(m_1,m_2, m_3) = (1,1,1),\, (1,2), \,(3) \Longrightarrow b(3) = p(1)^3 + p(1)p(2) + p(3) = 1 + 2 + 3 = 6.$$
What $c(n)$ concerns, observe that only can exist one pair $(\mu, \frac{1}{\overline{\mu}})$ and its multiplicity is $1$.  In particular, $k=1$, so: $$c(3) = b(1)b(3-2) = b(1)b(1) = 1.$$ 
Now: $a(3) = b(3) + c(3) = 6+1 = 7$, as stated in Lemma~\ref{lema: block-dim8}.
\item $n=4$.  First, $b(4)$ is given by:
$$(m_1, m_2, m_3, m_4) = (1,1,1,1),\,(1,1,2),\, (1,3),\, (2,2),\, (4)$$ $$\Longrightarrow b(4) = p(1)^4+p(1)^2p(2)+p(1)p(3)+p(2)^2+p(4)=1+2+3+4+5 = 15.$$
%On the other hand, we can count directly the possibilities for pairs $(\mu, \frac{1}{\overline{\mu}})$:  In fact, we can have $3$ options: only 1 pair with multiplicity $1$ (and space left for $2$ eigenvalues of modulus $1$), 1 pair with multiplicity $2$ (no more space) o $2$ pairs with multiplicity $1$ (no more space). Using the formula:
On the other hand, we can have up to two pairs $(\mu, \frac{1}{\overline{\mu}})$, so $k=1$ or $k=2$ and therefore:
$$c(4) = b(1)b(2) + b(2)b(0) = 3 + 3 = 6.$$
Finally: $a(4) = b(4) + c(4) = 21.$ That means that the analogous Lemma~\ref{lema: block-dim8} for dimension~$10$ would have $21$ items.
\end{itemize}
\end{example}

\subsubsection{The singular non-Hermitian case of maximal type}

In this section, we revisit the regularization algorithm (Section~\ref{sec:reg-alg}) in order to analyze how many possibilities there are for block decompositions, in the sense of the previous section.  We will consider a singular matrix $M$ of size $n$ and rank $r$ such that $J_M$ is of maximal type. According to Remark~\ref{rmk:singular-m}, the matrix $M_{(1)}$ has size $2r-n$, using that $m_1=m_2 = n-r$.  Let us denote $M_{(k)}$ the corresponding matrix obtained after finishing the $k-$th iteration of the algorithm.  Note that the associated $m_j$ that appear in this iteration are denoted by $m_{2k-1}, m_{2k}$.

Let us make several useful considerations about the regularization algorithm:
\begin{itemize}
\item In each iteration of the algorithm, the obtained matrix $M_{(k)}$ reduces its dimension respect to $M_{(k-1)}$ exactly in $m_{2k-1}+m_{2k}$.  In fact,
\begin{eqnarray}\label{dk}
d_1&:=&\text{dim }M_{(1)} = n-(m_1+m_2) = 2r-n, \nonumber \\  
d_k&:=&\text{dim }M_{(k)} = \displaystyle n-\sum_{j=1}^{2k}m_j = 2r-n - \sum_{j=3}^{2k}m_j,\quad k\geq 2,
\end{eqnarray}
where we have used that $m_1=m_2 = n-r$.
\item The algorithm can finish in different ways:
\begin{itemize}
    \item[(i)] $d_k=0$ for some $k$.  Then $M_{(k)}$ does not exist and $M$ is completely singular.  %we are in case 1 of Step 4 (see \eqref{m1+m2=n}). 
    \item[(ii)] $d_k\neq 0$ for all $k$ and there exists $j=2k$ or $j=2k+1$ such that $m_j=0$. Then $M_{(k)}$ is a regular matrix and $M = R\oplus S$.
\end{itemize} 

\item The *congruence invariants $m_i$ satisfy $m_j\geq m_k$ if $j<k$, so the longest algorithm is that for which $m_1=m_2 = n-r$ and $m_i=1$ for $i\geq 3$.  If $M_{(\tau)}$ is the reduced *congruence form of~$M$, then $2\tau= 2r-n+2$ or $2\tau - 1= 2r-n+2$.  In particular, $2\tau \leq  2r-n+2$. %where we have used that $m_1+\cdots+m_{\tau} = n$.
%\item The maximum possible number of iterations $\tau$ of the algorithm satisfies that $2\tau = 2r-n+2$ or $2\tau-1 = 2r-n+2$, depending on the parity of $n$.  In this case, $m_3 = \ldots = m_{2r-n+2} = 1$. %and the last $M'$ does not exist.
\end{itemize}

For our purpose, we define: 
\begin{equation}\label{tupla-m}
\vec{m} = (m_3,\ldots, m_{2r-n+2}),
\end{equation}
where:
\begin{equation}\label{conditions-singular}
\displaystyle \sum_{k=3}^{2r-n+2} m_k\leq 2r-n;\quad m_3\leq n-r;\quad m_i\geq m_j,\ \forall i\leq j;\quad m_i\geq 0.
\end{equation}
Any tuple $\vec{m}$ satisfying conditions~\eqref{conditions-singular} provides a new block decomposition for the singular part~$S$. Moreover, taking into account also the regular part $R$ in Theorem~\ref{class}, the number of new global block decompositions that a tuple $\vec{m}$ can provide is only $1$ if $M$ is completely singular, i.e, $d_k=0$ for some $k$ or, $a(d_{\tau})$ if $d_k\neq 0$ for all $k$, where $\tau$ refers to the final iteration of the algorithm and $d_k$ is given by \eqref{dk}. %where $M$, after the regularization algorithm, gives rise to a matrix $M_{(\tau)}$ of dimension $d_{\tau}$ given by~\eqref{dk}.  

\medskip

Let us denote by 
$\mathcal{D}$ the set of all admissible $\vec{m}$, i.e, $$\mathcal D = \{\vec{m}\in \mathbb R^{2r-n}\,|\, \vec{m} \text{ satisfies }\eqref{conditions-singular}\}.$$

Now, we can state the results:

\begin{lemma}\label{th4.13}
Let $(n,r)$ be a pair of natural numbers where $n-1\geq r\geq \lfloor \frac{n+1}{2}\rfloor$.  Then, the number of block decompositions associated to a singular matrix of maximal type of rank $r$, $M\in M_n(\mathbb C)$, is given by:
\begin{equation}\label{eq:singular-rango}
s(n,r) = \displaystyle \sum_{\vec{m} \in \mathcal{D}} a(d(\vec{m})),
\end{equation}
%$$s(n,r) = m_0(n,r) + \displaystyle \sum_{j\geq 1}  m_{2j-1}(n,r)+\displaystyle \sum_{j\geq 1}  a(2r-n-\sum_{k=3}^{2j} m_k) \, m_{2j}(n,r),$$ 
where $a(n)$ is given by~\eqref{eq:an}. %and we declare that $a(0) = 1$. 
\end{lemma}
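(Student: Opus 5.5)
We set up a bijection between block decompositions of maximal-type singular matrices of rank $r$ and pairs (admissible tuple $\vec m\in\mathcal D$, block decomposition of a regular matrix of size $d(\vec m)$), and then count. By Theorem~\ref{class} and Theorem~\ref{important}, the *congruence class of $M$ is $R\oplus S$. The singular part $S$ is determined by the invariants $m_1\geq m_2\geq\cdots\geq m_{2\tau}$. The regular part $R$ is the *congruence class of $M_{(\tau)}$, of size $d_\tau = n-\sum_j m_j$. So a ``block decomposition'' of $M$ is the same as the datum of the sequence $(m_j)$ together with a block decomposition of $R$.

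\textbf{Step 1.} Maximal type with rank $r$ forces $m_1=m_2=n-r$ (Definition~\ref{def:maximal-type}), and $m_1+m_2\leq n$ gives $r\geq \lfloor\frac{n+1}{2}\rfloor$ (Lemma~\ref{lemma-rango-maximal}). The remaining data is the tuple $\vec m=(m_3,\ldots,m_{2r-n+2})$, padded with zeros after the algorithm stops. The constraints \eqref{conditions-singular} are exactly what the algorithm imposes:
\begin{itemize}
\item monotonicity of the $m_j$;
\item $m_3\leq m_2=n-r$;
\item $\sum_{k\geq 3}m_k\leq d_1=2r-n$;
\item the length bound $2\tau\leq 2r-n+2$ from the third bullet above.
\end{itemize}
Conversely, each such tuple is realised. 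Build the canonical matrix $N\oplus B$, with $N$ as in Theorem~\ref{important}(d) for $m_1=m_2=n-r$ and $\vec m$, and $B$ any regular matrix of size $d(\vec m)$. Running the algorithm on it returns exactly these invariants, by the uniqueness part (b) of Theorem~\ref{important}. The resulting matrix is of maximal type and has rank $r$, since $m_1=n-r$.

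\textbf{Step 2.} Fix $\vec m\in\mathcal D$ and set $d(\vec m)=2r-n-\sum_{k\geq3}m_k$, as in \eqref{dk}.
\begin{itemize}
\item If $d(\vec m)=0$, the matrix is completely singular and there is exactly one decomposition. This matches $a(0)=1$.
\item If $d(\vec m)>0$, the regular part ranges over all regular matrices of that size. Its block decompositions are counted by $a(d(\vec m))$, by the Proposition giving \eqref{eq:an}.
\end{itemize}
Distinct tuples, or distinct regular decompositions, give non-*congruent matrices by Theorem~\ref{important}(b) and Theorem~\ref{HS}. Summing over $\mathcal D$ yields \eqref{eq:singular-rango}.

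\textbf{Main obstacle.} The delicate point is the bookkeeping of where the algorithm stops, so that each decomposition is counted exactly once. A tuple with trailing zeros must not be confused with a shorter one. The termination rule — stop at the first $m_j=0$ or at $d_k=0$ — must match a unique element of $\mathcal D$ with the padding convention.

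I would handle this by normalising every tuple to have zeros after its last nonzero entry. Then I would check that the pair (stopping index, $d_\tau$) is recovered from $\vec m$. One subtlety needs separate care: when the algorithm ends with $M_{(1)}=0$ (Case~2), the invariants continue with $m_3=d_1$ and $m_4=0$. I would verify that this is also encoded consistently by a single $\vec m$ with $d(\vec m)=0$.
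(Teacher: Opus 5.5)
Your proposal is correct and follows essentially the same route as the paper, which justifies the lemma by the paragraph preceding it: each admissible tuple $\vec m\in\mathcal D$ fixes the singular part $S$ and contributes $a(d(\vec m))$ choices for the regular part, with $a(0)=1$ covering the completely singular case. One harmless slip, inherited from the paper: the bound $2\tau\leq 2r-n+2$ fails when the last nonzero invariant has odd index (e.g.\ $n=3$, $r=2$, $m_3=1$ gives $\tau=2$). What you actually need is only that $\vec m$ has $2r-n$ entries, and this already accommodates every nonzero $m_k$ with $k\geq 3$, since these are at least $1$ each and sum to at most $2r-n$.
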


\begin{proposition}
    Let $M\in M_n(\mathbb C)$ be a singular matrix of maximal type of rank $r$.  Then, the number of block decompositions is given by:
    \begin{equation}\label{eq:singular}
s(n) = \sum_{r=\lfloor \frac{n+1}{2}\rfloor}^{n-1} s(n,r),
    \end{equation}
    where $s(n,r)$ is defined in~\eqref{eq:singular-rango}.
\end{proposition}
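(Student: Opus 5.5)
The plan is to partition the set of block decompositions of singular maximal-type matrices $M\in M_n(\mathbb C)$ according to the rank $r=\operatorname{rank}(M)$, and then add up the counts $s(n,r)$ from Lemma~\ref{th4.13}.

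First I identify which ranks can occur. Since $M$ is singular, $r\le n-1$. Lemma~\ref{lemma-rango-maximal} says that maximal type forces $r\ge\lfloor\frac{n+1}{2}\rfloor$. So $r$ lies in the range of summation in \eqref{eq:singular}.

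Next I check that $r$ is determined by the block decomposition, so that decompositions with different ranks are really different. By Theorem~\ref{important}(b), $m_1=n-r$ is a *congruence invariant. It can also be read off the decomposition of Theorem~\ref{class}: the regular part $R$ is invertible, and each block $J_k(0)$ has rank $k-1$. Hence the rank of $R\oplus S$ is fixed by the block types, and two decompositions with different ranks never coincide. The sets of decompositions for different $r$ are therefore pairwise disjoint.

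Then I check that every admissible decomposition is counted under its own rank. Fix $r$ in the range. The set $\mathcal D$ of tuples $\vec m$ satisfying \eqref{conditions-singular} describes exactly the possible singular parts with $m_1=m_2=n-r$. For each $\vec m$, the regular part is an arbitrary regular matrix of size $d(\vec m)$, counted by $a(d(\vec m))$. Here $a(0)=1$ covers the completely singular case. This is the content of Lemma~\ref{th4.13}, which gives $s(n,r)$ decompositions for that rank.

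Adding $s(n,r)$ over the disjoint classes indexed by $r$ gives \eqref{eq:singular}.

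The step that needs the most care is the disjointness, together with its converse: every tuple $\vec m\in\mathcal D$ combined with any regular block of size $d(\vec m)$ must actually arise from some maximal-type matrix of rank exactly $r$. To settle this I would build the matrix $M_{(\tau)}\oplus Q$ as in Theorem~\ref{important}(c) and check directly that its rank is $r$ and that $m_1=m_2$. I would also confirm that the uniqueness in Theorem~\ref{HS} rules out any decomposition being counted twice.
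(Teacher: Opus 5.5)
Your proposal is correct and follows the paper's (implicit) argument. The admissible ranks run from $\lfloor\frac{n+1}{2}\rfloor$ (Lemma~\ref{lemma-rango-maximal}) to $n-1$, and the rank is a *congruence invariant, so decompositions of different rank are distinct; summing the per-rank counts $s(n,r)$ of Lemma~\ref{th4.13} then gives \eqref{eq:singular}. The paper states the proposition with no separate proof, so your checks on disjointness and realizability simply make explicit what it leaves unstated.
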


Let us see some examples:

\begin{example}
Let us check that $s(3,2) = 2$, as we have obtained previously in Lemma~\ref{lema-block8-sing}. Let us suppose that $n=3$.  Then, the only possible rank for having a maximal type structure is $r=2$ (see Lemma~\ref{lemma-rango-maximal}). Now, $m_1 = m_2 = n-r = 1$. %and $M_{(1)}$ has dimension $d_1 = 2r-n=1$. 
Since $2r-n+2 = 3$, the longest tuple $\vec{m}$ reduces to $(m_3)$ and $m_3$ can only take the values 1 or 0. So, $\mathcal D = \{(1,1,1),\, (1,1,0)\}$.  Note that $\tau =2$, that is to say, the algorithm finishes in the second iteration.
\begin{itemize}
\item If $m_3 = 1$, observe that $d_2 = d(\vec m) = 0$, and the singular part $(m_1, m_2, m_3) = (1,1,1)$ provides $a(0) = 1$ new block decomposition, that corresponds to case (ii) in Lemma~\ref{lema-block8-sing}.  
\item On the other hand, if $m_3 = 0$, then $d_2 = d(\vec m) = 1$, which means that $M_{(\tau)} = M_{(2)}$ is a regular matrix of dimension $1$.  In this case, the singular part $(m_1, m_2) = (1,1)$ gives $a(1) = 1$ new block decomposition, as stated in case (i) in Lemma~\ref{lema-block8-sing}.  
\end{itemize}
All in all we get $s(3,2)=a(0)+a(1)=2$.
\end{example}

\begin{example}

Let us compute $s(4)$. Since $n=4$, the only possible ranks are $r=2$ or $r=3$.  Observe that if $r=2$, then $m_1=m_2 = 2$ and $d_1=0$.  Therefore, $\tau = 1$ and since $a(0) = 1$, we get that $s(4,2)=a(0)= 1$.  The corresponding matrix is completely singular and it is the following:
$$
\begin{pmatrix}
0&0&1&0\\
0&0&0&1\\
0&0&0&0\\
0&0&0&0
\end{pmatrix}.$$

On the other hand, if $r=3$, then $m_1 = m_2 = n-r = 1$.  Now, $2r-n+2 = 4$, which means that $\vec{m}=(m_3, m_4)$ where $m_3+m_4\leq 2$ and $m_3\leq 1$. Observe that $\mathcal D = \{(0,0),\, (1,0),\, (1,1)\}$.  Let us study case by case:
\begin{itemize}
\item $(m_3, m_4) = (0,0)$.  Then, $\tau=1$ and $M_{(\tau)}=M_{(1)}$ is a regular matrix of dimension $d_1=2$.  %Recall that $a(2) = 4$.
\item $(m_3, m_4) = (1,0)$.  Now, $\tau=2$ and $M_{(\tau)}=M_{(2)}$ is a regular matrix of dimension $d_2=1$. %Recall that $a(1) = 1$.
\item $(m_3, m_4) = (1,1)$.  In this case, $d_2=0$. % and $a(0) = 1$. 
\end{itemize}
The previous discussion implies that  $s(4,3) = a(2)+a(1)+a(0) = 4 + 1 + 1 = 6$, i.e., there exist 6 new global decomposition of blocks given by the following matrices:
$$\left(\begin{array}{c|cc}
M & \begin{pmatrix}
0&0\\ 0&0
\end{pmatrix}\\
\hline
0 & 0 &1\\
\hline
0 & 0 &0\\
\end{array}\right),\quad 
\begin{pmatrix}
B&0&0&0\\
0&0&1&0\\
0&0&0&1\\
0&0&0&0
\end{pmatrix},\quad 
\begin{pmatrix}
0&1&0&0\\
0&0&1&0\\
0&0&0&1\\
0&0&0&0
\end{pmatrix}.
$$
Here, $M$ is given in Lemma~\ref{lema: block-dim6} and $B\in\mathbb C^*$.

With all this information, we can state that $$s(4) = s(4,2) + s(4,3) = 1+6 = 7.$$
\end{example}

\subsubsection{Unified framework and the general case}\label{sect:general-todos} In this section we estimate the number of different families of 1-abelian complex structures of maximal type that there exist in dimension $2n$.  The results in the previous subsections give rise to the following

\begin{lemma}
    The number of decompositions of $M \in M_{n-1}(\mathbb{C})$ as direct sum of blocks $\Delta_i, \, H_{2i}(\mu)$ and $J_i(0)$ is $$B(n-1)=a(n-1)+s(n-1).$$
\end{lemma}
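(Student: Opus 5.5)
The plan is to split the set counted by $B(n-1)$ according to whether the maximal type matrix $M\in M_{n-1}(\mathbb C)$ is regular or singular. These two classes are disjoint and exhaust the admissible matrices. Recall that every regular matrix gives a structure of maximal type, since no $J_1(0)$ summand can occur (and Remark~\ref{Hermitian-max} says the same in the Hermitian case). Singular matrices enter only when $m_1=m_2$, by Definition~\ref{def:maximal-type}.

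Regularity is a *congruence invariant: it corresponds to the absence of Type~0 blocks in Theorem~\ref{HS}. Moreover, by the uniqueness part of Theorem~\ref{HS}, a block decomposition determines whether the matrix is singular. Hence the decompositions counted in $B(n-1)$ split as a disjoint union of the decompositions of regular matrices and those of singular maximal-type matrices.

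For the regular part I invoke the Proposition giving $a(n-1)$ through formula~\eqref{eq:an}. Theorem~\ref{class} with $S=0$ shows that the decompositions are exactly the direct sums of blocks $H_{2k}(\mu)$ and $\varepsilon e^{i\theta/2}\Delta_j$. By the regular algorithm and Lemma~\ref{simil-cosquare}, these are in bijection with the admissible Jordan types of $C(M)$, and that bijection is precisely what $a(n-1)=b(n-1)+c(n-1)$ counts (Lemmas~\ref{lemma13} and~\ref{lemma14}).

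For the singular part I invoke formula~\eqref{eq:singular}, which sums $s(n-1,r)$ over the admissible ranks. Lemma~\ref{lemma-rango-maximal} shows that these ranks are exactly $\lfloor n/2\rfloor\le r\le n-2$. Lemma~\ref{th4.13} then counts, for each rank, the singular parts given by the tuples $\vec m$, each multiplied by the number $a(d(\vec m))$ of regular parts of the residual size. The two facts doing the work here are Theorem~\ref{important} and Theorem~\ref{class}: they guarantee that the pair consisting of the tuple $(m_i)$ and the decomposition of $M_{(\tau)}$ is a complete invariant. Different ranks give disjoint families because $m_1=n-1-r$.

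Adding the two counts gives $B(n-1)=a(n-1)+s(n-1)$. The step I expect to be the main obstacle is checking that the singular count contains no overlap and misses no case. Concretely, I must verify that every tuple in $\mathcal D$ is actually realized by some maximal-type matrix, which can be done by taking $M_{(\tau)}\oplus Q$ from Theorem~\ref{important}(c). I must also verify that the regular part $M_{(\tau)}$ ranges over all $a(d_\tau)$ types independently of the singular part, and that the completely singular case contributes exactly $a(0)=1$. Uniqueness in Theorem~\ref{important}(b) settles the no-overlap part, so the remaining work is this realizability and independence check.
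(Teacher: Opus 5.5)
Your proposal is correct and follows the paper's route. The paper gives no separate argument: it obtains the lemma directly by adding the regular count $a(n-1)$ to the singular maximal-type count $s(n-1)=\sum_{r=\lfloor n/2\rfloor}^{n-2}s(n-1,r)$ over disjoint classes, and your disjointness, rank-range and realizability checks only make explicit what the paper leaves implicit. One caution: the identity needs $a(n-1)$ only as the number of regular block types (its definition), so avoid claiming that the closed formula~\eqref{eq:an} counts them exactly, because the product $p(m_1)\cdots p(m_r)$ in Lemma~\ref{lemma13} treats equal multiplicities as ordered and overcounts (multiplicities $(2,2)$ give $3$ types, not $p(2)^2=4$, so $b(4)=14$).
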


Finally, using \eqref{AB-B} we get the desired bound:
 
% In order to provide a better bound for $Ab_1(n)$, our idea consists on counting separately the number of each type of 1-abelian complex structure (Hermitian, regular non-Hermitian and singular non-Hermitian) that are of maximal type, and sum all of them together.
\begin{theorem}\label{thm:cota-complex-st}
    The number of families of non-equivalent 1-abelian complex structures $J_M$ of maximal type, $Ab_1(n)$, on NLAs of dimension $2n$ is bounded below by: $$Ab_1(n) \geq \lfloor \frac{n+1}{2} \rfloor+a(n-1)-1+ s(n-1).$$
\end{theorem}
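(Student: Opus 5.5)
The bound is obtained by combining inequality \eqref{AB-B} with the count of block decompositions from the preceding lemma. The overall strategy has two parts: first show that distinct block configurations give inequivalent families, and then add the extra Hermitian families that share a single configuration.

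\textbf{Step 1: configurations give inequivalent families.} I would show that $Ab_1(n)\geq B(n-1)$. The argument uses the lemma stating that if $J_M\sim J_{M'}$, then $M$ and $M'$ have the same numbers and types of blocks $H_{2i}(\mu)$, $\Delta_j$ and $J_k(0)$.

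Concretely, by Lemma~\ref{equivalence} we have $M'=c\,PMP^*$. By Lemma~\ref{cosquaresim}, the spectrum of $C(M')$ is the spectrum of $C(M)$ rotated by the unimodular factor $c/\bar c$. This rotation preserves moduli, and it preserves the Jordan block structure, since similarity commutes with multiplication by a scalar. The singular invariants $m_j$ are unaffected by rescaling. Hence the combinatorial block configuration is an invariant of the equivalence class.

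Each admissible configuration is realized by some maximal-type matrix. For regular matrices this comes from the construction in Section~\ref{section3.1}. For singular ones it comes from Theorem~\ref{important}, with any regular part $M_{(\tau)}$ having one of the $a(d_\tau)$ configurations. Therefore the number of families is at least the number of configurations, which is $B(n-1)=a(n-1)+s(n-1)$ by the previous lemma.

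\textbf{Step 2: the Hermitian refinement.} The Hermitian maximal-type matrices are exactly the regular Hermitian ones, by Remark~\ref{Hermitian-max}. They all share one configuration, namely $\Delta_1^{\oplus(n-1)}$ with sign choices, whose cosquare is the identity.

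By Theorem~\ref{teo:Hermitian} with $r=0$, these matrices nevertheless split into $\lfloor\frac{n+1}{2}\rfloor$ pairwise inequivalent structures $J_{n,k,0}$. This requires checking that the structures $J_{n,k,0}$ are pairwise inequivalent. I would argue as follows:
\begin{itemize}
\item if $cPMP^*=\widetilde M$ with both matrices Hermitian and regular, then $c$ must be real, because $C(cM)=(c/\bar c)\,C(M)$ and both cosquares equal the identity;
\item after absorbing $|c|$ into $P$, Sylvester's law (Theorem~\ref{sil}) fixes the signature up to the swap $(p,q)\leftrightarrow(q,p)$.
\end{itemize}

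\textbf{Step 3: assembling the bound.} Replacing the single Hermitian configuration in the count $B(n-1)$ by these $\lfloor\frac{n+1}{2}\rfloor$ families yields \eqref{AB-B}, that is, $Ab_1(n)\geq\lfloor\frac{n+1}{2}\rfloor+B(n-1)-1$. Substituting $B(n-1)=a(n-1)+s(n-1)$ gives the stated inequality.

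\textbf{Main obstacle.} The delicate point is the invariance in Step 1 for matrices with both a regular and a singular part. There one must ensure that rescaling by $c$ commutes with the regularization: the singular summands $J_k(0)$ absorb $c$ through a diagonal *congruence, and the regular part transforms as $M_{(\tau)}\mapsto cM_{(\tau)}$. The uniqueness clause of Theorem~\ref{important}(b) then transfers the configuration.
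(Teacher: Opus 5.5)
Your proposal is correct and takes the paper's route: $Ab_1(n)\ge B(n-1)$ from the invariance of the block configuration under $M\mapsto cPMP^*$, then the Hermitian correction giving \eqref{AB-B}, then substitution of $B(n-1)=a(n-1)+s(n-1)$. Your extra checks fill in details the paper leaves implicit: that $c$ must be real when both matrices are regular Hermitian, which gives pairwise inequivalence of the $J_{n,k,0}$, and that $cJ_k(0)\sim_* J_k(0)$ via a diagonal *congruence, which reduces the singular case to the regular part where Lemma~\ref{cosquaresim} applies.
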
 
The summand $\lfloor \frac{n+1}{2} \rfloor$ corresponds to the number of Hermitian complex structures; $a(n-1)-1$ counts the regular non-Hermitian case and, $s(n-1)$ corresponds to the singular non-Hermitian case of maximal type.
%\begin{proof}
   % According to Lemma \ref{hermcount}, for a $2n$-dimensional NLA there are $\lfloor \frac{n+1}{2} \rfloor$ new 1-abelian complex structures $J_M$ with $M \in M_{n-1}(\mathbb{C})$ Hermitian. It remains to sum the new 1-abelian complex structures $J_M$ when $M$ is non-Hermitian. On the one hand, the number of $J_M$ for $M \in M_{n-1}(\mathbb{C})$ regular and non-Hermitian is bigger or equal than the number of decompositions of $M$ as a direct sum of blocks $H_{2k}(\mu)$ and $\lambda\Delta_k$. By section \ref{sect:general-regular}, this number is $a(n-1)$. However, passing to 1-abelian complex structures, the case when $M$ is a multiple of a Hermitian matrix is reduced to the hermitian case, concluding that the number of new $J_M$ for $M$ regular non-Hermitian is $a(n-1)-1$. For the singular case, the idea is to sum new 1-abelian complex structures for each rank smaller than $n-1$. This number is codified by $s(n, r)$ (see Theorem \ref{th4.13}).
%\end{proof}

\begin{example}
Applying the previous theorem and the computations done in the examples above, we conclude that the number of families of non-equivalent 1-abelian complex structures $J_M$ of maximal type on NLAs of dimension $10$ is at least $30$.
\end{example}

\section*{Acknowledgments}
\noindent 
This work has been partially supported by grant 
PID2023-148446NB-I00, funded by\\  MICIU/AEI/10.13039/501100011033, 
and by grant E22-23R ``\'Algebra y Geometr\'ia'' (Gobierno de Arag\'on/ FEDER).

%%%%%%%%%%%%%%%%%%%%%

\end{document}